\begin{document}

\theoremstyle{plain} \numberwithin{equation}{section}

\newtheorem{thm}{Theorem}[section]
\newtheorem{prop}[thm]{Proposition}
\newtheorem{cor}[thm]{Corollary}
\newtheorem{conj}[thm]{Conjecture}
\newtheorem{lemma}[thm]{Lemma}
\newtheorem{lemdef}[thm]{Lemma--Definition}
\newtheorem{ques}[thm]{Question}
\newtheorem{claim}[thm]{Claim}

\theoremstyle{definition}

\newtheorem{defn}[thm]{Definition}
\newtheorem{ex}[thm]{Example}
\newtheorem{examples}[thm]{Examples}
\newtheorem{notn}[thm]{Notation} 
\newtheorem{note}[thm]{Note}
\newtheorem{question}[thm]{Question}
\newtheorem{obs}[thm]{Observation}
\newtheorem{rmk}[thm]{Remark}
\newtheorem{review}[thm]{}
\newtheorem{Empty}[thm]{}
\newcommand{\bi}{\begin{itemize}}  
\newcommand{\ei}{\end{itemize}}
\newcommand{\bp}{\begin{proof}}
\newcommand{\ep}{\end{proof}}

\def\Trick#1{\begin{Empty}\bf#1\end{Empty}}

\def\AA{\mathbb{A}}
\def\CC{\mathbb{C}}
\def\FF{\mathbb{F}}
\def\GG{\mathbb{G}}
\def\GGG{\mathbb{G}}
\def\PP{\mathbb{P}}
\def\cPP{\check{\mathbb{P}}}
\def\SS{\mathbb{S}}
\def\WW{\mathbb{W}}

\def\QQ{\mathbb{Q}}
\def\RR{\mathbb{R}}
\def\VVV{\mathbb{V}}
\def\ZZ{\mathbb{Z}}
\def\NN{\mathbb{N}}
\def\VV{\mathbb{V}}

\def\m{\mathfrak{m}}

\def\arr{\mathop{\longrightarrow}}

\def\ov{\overline}

\def\bG{\Bbb G}
\def\bP{\Bbb P}
\def\al{\alpha}
\def\be{\beta}
\def\de{\delta}
\def\eps{\epsilon}
\def\ga{\gamma}
\def\io{\iota}
\def\ka{\kappa}
\def\la{\lambda}
\def\na{\nabla}
\def\om{\omega}
\def\si{\sigma}
\def\th{\theta}
\def\ups{\upsilon}
\def\ve{\varepsilon}
\def\vp{\varpi}
\def\vt{\vartheta}
\def\ze{\zeta}

\def\De{\Delta}
\def\Ga{\Gamma}
\def\La{\Lambda}
\def\Om{\Omega}

\def\cA{\mathcal{A}}
\def\cB{\mathcal{B}}
\def\cC{\mathcal{C}}
\def\cD{\mathcal{D}}
\def\cE{\mathcal{E}}
\def\cF{\mathcal{F}}
\def\cG{\mathcal{G}}
\def\cH{\mathcal{H}}
\def\cI{\mathcal{I}}
\def\cJ{\mathcal{J}}
\def\cK{\mathcal{K}}
\def\cL{\mathcal{L}}
\def\cM{\mathcal{M}}
\def\cN{\mathcal{N}}
\def\cO{\mathcal{O}}
\def\cT{\mathcal{T}}
\def\cS{\mathcal{S}}
\def\cU{\mathcal{U}}
\def\cHom{\mathcal{H}\text{\it om}}

\def\ocM{\overline{\mathcal{M}}}

\def\tX{\tilde X}
\def\tY{\tilde Y}
\def\tN{\tilde N}

\def\bA{\overline{A}}
\def\bB{\overline{B}}
\def\bE{\overline{E}}
\def\bF{\overline{F}}
\def\bL{\overline{L}}
\def\bM{\overline{M}}
\def\bZ{\overline{Z}}
\def\bbf{\overline{f}}
\def\bg{\overline{g}}

\def\c{\text{c}}
\def\ch{\text{ch}}
\def\codim{\text{codim}}
\def\dim{\text{\rm dim}}
\def\Supp{\text{\rm Supp}}
\def\Eff{\text{\rm Eff}}
\def\Exc{\text{\rm Exc}}
\def\Ext{\text{\rm Ext}}
\def\Sym{\text{\rm Sym}}
\def\Aut{\text{\rm Aut}}
\def\Hom{\text{\rm Hom}}
\def\Coker{\text{\rm Coker}}
\def\Ker{\text{\rm Ker}}
\def\Tor{\text{\rm Tor}}
\def\Cone{\text{\rm Cone}}
\def\h{\text{h}}
\def\inf{\infty}
\def\p{\mathfrak{p}}
\def\HH{\text{H}}
\def\Id{\text{Id}}
\def\LM{\overline{\text{LM}}}
\def\M{\overline{\text{M}}}
\def\NE{\overline{\text{NE}}}
\def\PGL{\text{\rm PGL}}
\def\SL{\text{\rm SL}}
\def\Pic{\text{\rm Pic}}
\def\Proj{\text{\rm Proj}}
\def\R{\text{R}}
\def\Spec{\text{Spec}}
\def\td{\text{td}}
\def\Bl{\text{\rm Bl}}
\def\wt{\text{weight}}

\def\dra{\dashrightarrow}
\def\hra{\hookrightarrow}
\def\lra{\leftrightarrow}
\def\ra{\rightarrow}

\newcommand{\sslash}{\mathbin{/\mkern-6mu/}}


\title{Derived category of moduli of pointed curves - I}

\author{Ana-Maria Castravet and Jenia Tevelev}

\address{Universit\'e Paris-Saclay, UVSQ, CNRS, Laboratoire de Math\'ematiques de Versailles, 78000, Versailles, France}
\email{ana-maria.castravet@uvsq.fr }

\address{Department of Mathematics and Statistics, University of Massachusetts Amherst, 710 North Pleasant Street, Amherst, MA 01003 
and Laboratory of Algebraic Geometry and its applications, HSE, Moscow, Russia}
\email{tevelev@math.umass.edu}
\thanks{2010 Mathematics Subject Classification:  14C15, 14D22, 14H10, 14M99, 16E20, 18F30, 20C30, 05A19. 
Keywords: exceptional collections, stable rational curves, moduli spaces of weighted stable curves, Losev-Manin spaces.}
\thanks{The first named author was supported by NSF grants DMS-1529735 and DMS-1701752. 
The second named author was supported by NSF grants DMS-1303415 and DMS-1701704.}

\begin{abstract}
This is the first paper in the sequence devoted to  derived category of moduli spaces of curves of genus $0$  with marked points.
We develop several approaches to describe it equivariantly with respect to the action of the symmetric group
permuting marked points.
We construct an equivariant  full exceptional collection on the Losev--Manin space
which categorifies derangements. 
\end{abstract}

\maketitle


\section{Introduction}

The special feature of moduli spaces of curves with marked points is the action of the symmetric group permuting marked points, and our goal is to exhibit this action
in the description of the derived category. One can think about the derived category as an enhanced cohomological invariant.
Although there are many papers in the literature computing
cohomology of $\M_{0,n}$ as a module over the symmetric group (e.g.~\cite{Getzler, BergstromMinabe}),
the equivariant Euler--Poincare polynomial is expressed as an alternating sum, which therefore has 
no obvious categorification. 
On the other hand, it is  often  easy to get some description 
of the derived category which however does not respect the group action.
For example, it is obvious that $D^b(\M_{0,n})$ has a full exceptional
collection. Indeed, $\M_{0,n}$ has a Kapranov model as an
iterated blow-up of $\bP^{n-3}$ in $n-1$ points followed by the blow-up of $n-1\choose 2$ proper transforms of lines connecting points, etc.
With a little work, Orlov's theorem on derived category of the blow-up (see \S\ref{zfasfhasfh}) gives a  full exceptional collection.
However, Kapranov's model is not unique: it depends on the choice of the $\psi$ class, i.e.,~the choice of a marking, and therefore
this collection is not preserved by $S_n$ (only by $S_{n-1}$).
The derived categories of $\M_{0,n}$ and related Hassett spaces and GIT quotients have been   studied in \cite{BFK} and \cite{MS}, 
although not from the equivariant perspective.

\begin{question}\label{Orlov question}
Is there a full exceptional  $S_n$-invariant collection on $\M_{0,n}$?
\end{question}

This question of D.~Orlov, communicated to us by A. Kuznetsov, will be investigated in detail in the second paper in the series.
Note that a striking and unexpected corollary of its existence is that the K-group  $K_0(\M_{0,n})$
is  a {\em permutation} representation of $S_n$. As a motivation, one can argue that since $\M_{0,n}$ is smooth over $\Spec\,\ZZ$,
maybe it is somehow ``defined over $\FF_1$'', and therefore the same should be true of its  K-theory as an $S_n$-module,
and so perhaps it should be a permutation representation.

In this paper we suggest two general strategies which may have other  applications 
and provide an answer for the Losev--Manin space \cite{LM}.

\medskip 

One approach, which justifies why consider the case of Losev-Manin spaces, 
is based on an equivariant version of Orlov's theorem on blow-ups (Section \S\ref{equiOrlovS}) and inspired by 
the work of Bergstrom and Minabe in \cite{BergstromMinabe}. 

Let $X$ be a smooth projective variety and let 
$Y_1,\ldots,Y_n\subseteq X$ be smooth transversal subvarieties of codimension $l$. For any subset 
$I\subseteq\{1,\ldots,n\}$, 
we denote by $Y_I$ the intersection $\cap_{i\in I}Y_i$. In particular, $Y_\emptyset=X$.
Let $q:\,\tX\to X$ be an iterated blow-up of (proper transforms of) $Y_1,\ldots,Y_n$.
In addition, let $G$ be a finite group acting on $X$ permuting $Y_1,\ldots,Y_n$.
Then it also acts on $\tilde X$ and the morphism $q$ is $G$-equivariant. Let $G_I\subseteq G$ be a normalizer of $Y_I$ for each subset 
$I\subseteq \{1,\ldots,n\}$ (in particular, $G_\emptyset=G$). 
We show in Lemma~\ref{BigDaddy} that if 
$D^b(Y_I)$ admits a full $G_I$-equivariant exceptional collection for every subset $I$
then $D^b(\tilde X)$ admits a full $G$-equivariant exceptional collection.

Next we generalize an inductive computation in \cite{BergstromMinabe} of the equivariant Euler--Poincare polynomial
 of $\M_{0,n}$. In the derived category setting, it gives the following theorem.
 Fix integers $l\ge1$ and $0\le k\le n$.
 For a weight 
$${\bold a}=\left(1, . . . ,1, {1\over l},\ldots, {1\over l}\right)$$ 
(with $k$ copies of $1$ and $n-k$ copies of $1\over l$), let 
$\M^n_{k,l}$ be the Hassett moduli space \cite{Ha} of $\bold a$-weighted stable rational curves.
For example, $\M^n_{0,1}\simeq\M_{0,n}$ and  
$$\M^n_{0,\lfloor {n-1\over 2}\rfloor}$$ is a symmetric GIT quotient $(\PP^1)^n\sslash\PGL_2$
if $n$ is odd and its Kirwan resolution if $n$ is even.

\begin{thm}\label{BMapproach}
If $\M^n_{k,r(n,k)}$ admits a full $(S_k\times S_{n-k})$-equivariant exceptional collection for every $n$ and every $0\le k\le n-3$ then
$\M_{0,n}$ admits a full $S_n$-equivariant exceptional collection for every $n$. Here
$$r(n, k) :=\begin{cases}
\left\lfloor {n-1\over 2}\right\rfloor & \mbox{if } k=0\\
n-2 & \mbox{if } k=1\\
n-k & \mbox{if } k\ge2.\\
\end{cases}
$$
\end{thm}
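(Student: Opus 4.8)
The plan is to categorify the inductive computation of the $S_n$-equivariant Euler--Poincar\'e polynomial of $\M_{0,n}$ in \cite{BergstromMinabe}: every numerical blow-up relation in the equivariant Grothendieck ring used there will be promoted to a semiorthogonal decomposition, and the equivariant blow-up formula of Lemma~\ref{BigDaddy} will glue these decompositions into a single full exceptional collection. I would run the argument as an induction on $n$, strengthening the statement to assert that \emph{every} intermediate Hassett space $\M^n_{k,l}$ (for $0\le k\le n$ and $1\le l\le r(n,k)$) carries a full $(S_k\times S_{n-k})$-equivariant exceptional collection; the desired conclusion is then the special case $\M_{0,n}=\M^n_{0,1}$. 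The base cases $n\le 4$ are immediate, and the inductive hypothesis furnishes full equivariant exceptional collections on all moduli spaces with fewer than $n$ markings.

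For fixed $n$, the core of the argument is to traverse the totally symmetric chain of Hassett reduction morphisms
$$\M^n_{0,1}\to\M^n_{0,2}\to\cdots\to\M^n_{0,r(n,0)},$$
in which the light weight $1/l$ is lowered one step at a time while all $n$ points are treated symmetrically, so that every morphism is $S_n$-equivariant. The endpoint $\M^n_{0,r(n,0)}=(\bP^1)^n\sslash\PGL_2$ (or its Kirwan resolution when $n$ is even) carries a full $S_n$-equivariant exceptional collection by the hypothesis of the theorem with $k=0$. I would then check that each reduction $\M^n_{0,l}\to\M^n_{0,l+1}$ is an iterated blow-up of $\M^n_{0,l+1}$ along the $S_n$-permuted, mutually transversal loci where a subset $S\subset\{1,\dots,n\}$ of exactly $l+1$ light points comes together; a dimension count shows that the boundary divisor carrying such a bubble contracts to the codimension-$l$ diagonal locus $Z_S$, exactly as in an Orlov blow-up (exceptional $\bP^{l-1}$-bundle). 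Since $Z_S$ is isomorphic to a Hassett space on $n-l$ markings, with the collided cluster behaving as a single heavy marking, the inductive hypothesis supplies a full exceptional collection on $Z_S$ equivariant for its stabilizer $G_S=S_{l+1}\times S_{n-l-1}\subset S_n$. Lemma~\ref{BigDaddy} then propagates a full $S_n$-equivariant exceptional collection backwards along the chain, from $\M^n_{0,r(n,0)}$ up to $\M_{0,n}$.

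The role of the auxiliary spaces $\M^n_{k,r(n,k)}$ with $k\ge1$ is to close the recursion on the blow-up centers: because the node of a contracted tail behaves like a heavy marking, the centers $Z_S$ (and the centers appearing in the lower-$n$ chains) are themselves symmetric Hassett spaces with a positive number of heavy points. The explicit values of $r(n,k)$ are engineered precisely so that, after possibly running further symmetric reduction chains in the $l$-direction for each $k$, every center terminates either at one of the hypothesis spaces $\M^m_{k,r(m,k)}$ with $m\le n$ or at a space with $m<n$ markings covered by induction; the values $\lfloor(n-1)/2\rfloor$, $n-2$, $n-k$ are exactly the points at which these chains bottom out at the corresponding GIT quotients, which is also why the range $0\le k\le n-3$ suffices (for $k\ge n-2$ the Hassett space is already $\M_{0,n}$).

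The main obstacle I expect is geometric rather than homological: verifying that the reduction morphisms are genuinely iterated smooth blow-ups along transversal centers in the precise sense required by Lemma~\ref{BigDaddy}, and in particular controlling the wall near the symmetric GIT quotient for even $n$. There the quotient $(\bP^1)^n\sslash\PGL_2$ acquires strictly semistable configurations (with $n/2$ points colliding at each of two points), is singular, and the relevant crossing is not a single smooth blow-up but the Kirwan partial desingularization; one must check that the decomposition survives across this wall and that the strictly semistable locus is resolved equivariantly. Secondary difficulties are the bookkeeping of the stabilizers $G_S$ so that the collections glue $S_n$-equivariantly, and confirming transversality when several diagonal loci $Z_S$ overlap within a single reduction step.
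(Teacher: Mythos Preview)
Your proposal is correct and follows essentially the same approach as the paper: both categorify the Bergstr\"om--Minabe inductive computation by applying the equivariant Orlov blow-up formula (Lemma~\ref{BigDaddy}) to the tower of reduction morphisms $\M^n_{k,l}\to\M^n_{k,l+1}$, using that the blow-up centers and their mutual intersections are smaller Hassett spaces $\M^{n-lm}_{k+m,l+1}$ covered by the inductive hypothesis. The geometric obstacles you anticipate (smoothness and transversality of the centers, identification of their intersections) are supplied directly by \cite[Lemma~3.1 and \S3.2]{BergstromMinabe}, and your Kirwan-resolution concern is moot since for even $n$ the space $\M^n_{0,r(n,0)}$ is by definition the Kirwan resolution and is already part of the hypothesis.
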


Concretely, we need the following spaces:
\begin{itemize}
\item The symmetric GIT quotient and its Kirwan resolution, which will  be studied in the sequel to this paper.
\item $\M^n_{1,n-2}\simeq\PP^{n-3}$ via Kapranov map. We can take any standard exceptional 
collection on $\PP^{n-3}$, for example $\cO,\ldots,\cO(n-3)$.
\item $\M^n_{2,n-2}$. This is the Losev-Manin space studied in this paper.
\item Spaces $\M^n_{k,n-k}$ for $k>2$. These spaces are still two complicated for the calculations of the derived category
and in the sequel to this paper
we will investigate their further equivariant reductions.
\end{itemize}


\

We now discuss another strategy, which is the one we will use in this paper for the case of the Losev-Manin spaces $\LM_n$.  
We start with an example.
\begin{ex}
Unlike $\M_{0,5}$, which has $5$ Kapranov models and therefore $5$ Orlov-style exceptional collections,
the $2$-dimensional Losev--Manin space, which we denote by $\LM_3$ in this paper (see below),
has only two non-trivial $\psi$-classes $\psi_0$ and $\psi_\infty$, realizing it as $\bP^2$ blown-up at three points $p_1$, $p_2$, $p_3$
in two ways, related by the Cremona involution. The corresponding exceptional collection invariant under
all automorphisms has three blocks and consists of line bundles
\begin{equation}
\{-\psi_0,-\psi_\infty\}, \{\pi_1^*\cO(-1), \pi_2^*\cO(-1), \pi_3^*\cO(-1)\}, \cO,
\label{afsvaf}\end{equation}
where $\pi_i:\, \LM_3\to\LM_2\simeq\bP^1$
is a forgetful map, which can be thought of as a linear projection $\bP^2\dra\bP^1$ from the point $p_i$.
\end{ex}

The last four line bundles in \eqref{afsvaf} are pull-backs under  forgetful maps but the first two have a trivial derived pushforward
by any forgetful map. To study  situations of this sort more systematically, we introduce
an inclusion-exclusion principle in triangulated categories (see Lemma~\ref{jmgfgk}) and its application in the following set-up.

\begin{defn}\label{zxfbdfdfhdfd}
Given a collection of morphisms of smooth projective varieties $\pi_i:\,X\to X_i$ for $i\in I$,
we call an object 
$E\in D^b(X)$ {\em cuspidal}
\footnote{The terminology (suggested to us by Alex Oblomkov) comes from cuspidal representations of representation theory.
When considering a single morphism, this is sometimes known as the \emph{null-category}.} if 
$${R\pi_i}_*E=0\quad \hbox{\rm for every}\ i\in I.$$
The {\em cuspidal block} is the full triangulated subcategory of cuspidal objects
$$D^b_{cusp}(X)\subset D^b(X).$$
\end{defn}

Philosophically, the cuspidal block captures information  about the derived category
not already encoded in $D^b(X_i)$ for $i\in I$.
We show in Theorem~\ref{prototype} that under quite general assumptions $D^b_{cusp}(X)$ is an admissible subcategory
and in fact the first block in the ``inclusion--exclusion'' semi-orthogonal decomposition of $D^b(X)$.
In our applications morphisms $\pi_i$ are forgetful maps such as $\M_{0,n}\to\M_{0,n-1}$
and thus an $S_n$-equivariant description of $D^b(X)$ can be reduced to an $S_n$-equivariant description of $D^b_{cusp}(X)$.

\begin{question}\label{cuspidal M}
Find a full, $S_n$-invariant, exceptional collection in the cuspidal block $D^b_{cusp}(\M_{0,n})$ with respect to all the forgetful maps 
$\M_{0,n}\to\M_{0,n-1}$.  
\end{question}

An answer to Question \ref{cuspidal M} together with Proposition \ref{semiortMN} (an application of Theorem \ref{prototype})
will therefore answer Question \ref{Orlov question}.
\begin{prop}\label{semiortMN}
We write $\bM_N\simeq\bM_{0,n}$ for the moduli space of stable rational curves with points marked by any $n$-element set $N$.
Then $D^b(\bM_N)$ admits a semi-orthogonal decomposition
\begin{equation}\label{lkehhagB}
D^b(\bM_N)=\langle D^b_{cusp}(\bM_N), \ \{\pi_K^*D^b_{cusp}(\bM_{N\setminus K})\}_{K\subset N},\ \cO\rangle
\end{equation}
where $K$ runs over subsets with $1\le |K|\le n-4$
in the order of increasing cardinality $|K|$ and $\pi_K: \M_N\ra \M_{N\setminus K}$ is the map that forgets markings in $K$. 
\end{prop}

We mention the answer to Question \ref{cuspidal M} in the first few small $n$ cases. 
\begin{ex} 
Let $T(-\log)$ be the rank $n-3$ vector bundle on $\M_{0,n}$ of vector fields tangent to its (normal crossing) boundary divisor.
It is easy to deduce from the results of \cite{KeelTevelev} that $T(-\log)$ is an exceptional vector bundle and an element of $D^b_{cusp}(\M_{0,n})$
for every $n$. This fact, which we view as a manifestation of rigidity of $\M_{0,n}$, 
was one of our original motivations for writing this paper.
For small $n$,  $D^b_{cusp}(\M_{0,n})$ has the following full $S_n$-equivariant exceptional collection:
\begin{itemize}
\item $(n=4)$.
$T(-\log)$ ($1$ object);
\item $(n=5)$.
$T(-\log)$ ($1$ object);
\item $(n=6)$.
$\cO_{\bP^1\times\bP^1}(-1,-1), \cL^\vee, T(-\log)$ ($12$ objects).
\end{itemize}
Here $\bP^1\times\bP^1\subset\M_{0,6}$ are boundary divisors of type $(3,3)$
and $\cL$ is a pull-back of the symmetric GIT polarization (the Segre cubic).
\end{ex}

\

We apply this approach to the Losev--Manin \cite{LM} moduli space. For an $n$-element set $N$, we let 
$\tN=\{0,\infty\}\sqcup N$.
We write  $\LM_N$ for the moduli space of nodal linear chains of $\bP^1$'s marked by $\tN$ 
with $0$ is on the left tail and $\infty$ is on the right tail of the chain. This is a ``simplified'' version of $\M_{0,n}$,
with linear chains replacing arbitrary trees.
The stability conditions are
\begin{itemize}
\item Marked points are never at the nodes.
\item Only points marked by $N$ are allowed to coincide with each other. 
\item Every $\bP^1$ has at least three special points (marked points or nodes).
\end{itemize}

The space $\LM_N$ has an action by 
the group $S_2\times S_{N}$ permuting markings. The action of~$S_2$, which we call the {\em Cremona action}, interchanges $\inf$ and $0$.
Both psi-classes  $\psi_0$ and $\psi_\infty$ induce birational morphisms $\LM_N\to\bP^{n-1}$, ''Kapranov models'',
which realize $\LM_N$ as an iterated blow-up of $\bP^{n-1}$ in $n$ points (standard basis vectors) followed
by blowing up $n\choose 2$ proper transforms of lines connecting points, etc.\footnote{We note that the other $\psi$-classes of $\LM_N$ are trivial.}.  
In these coordinates the Cremona action
is given by the standard Cremona involution
$$(x_1:\ldots:x_n)\to \left({1\over x_1}:\ldots:{1\over x_n}\right).$$

The Losev--Manin space $\LM_N$ is a toric variety of dimension $n-1$. Its toric orbits (or their closures, the boundary strata
of the moduli space) can be described as follows.
Every non-trivial bipartition $N=N_1\sqcup N_2$ corresponds to the boundary divisor, which we denote $\delta_{N_1}$,
parametrizing (degenerations of) chains of two~$\bP^1$, one with markings $N_1\cup\{0\}$,
and another with markings $N_2\cup\{\infty\}$.
This notation is different from the standard notation for~$\bM_{0,n}$ (where an analogous divisor
is denoted by $\delta_{N_1\cup\{0\}}$) but more convenient for us.
More generally, every partition $N=N_1\sqcup\ldots\sqcup N_k$ with $|N_i|>0$ for every~$i$
corresponds to the boundary stratum 
$$Z_{N_1,\ldots, N_k}=\de_{N_1}\cap \de_{N_1\cup N_2}\cap\ldots\cap
\de_{N_1\cup \ldots\cup N_{k-1}}$$ 
which parametrizes (degenerations of) linear chains of $\bP^1$'s with points marked by, respectively,
$N_1\cup\{0\}$, $N_2$,\ldots, $N_{k-1}$, $N_k\cup\{\infty\}$. 
We can identify
$$Z_{N_1,\ldots, N_k}\simeq \LM_{N_1}\times\ldots\times\LM_{N_k},$$ 
where the left node of every $\PP^1$ is marked by $0$
and the right node by $\infty$. 

We have a collection of forgetful maps
$$\pi_K:\, \LM_N\to \LM_{N\setminus K}$$
for every subset $K\subset N$ with $1\le|K|\le n-1$.
It is given by forgetting points marked by $K$ and stabilizing.
In particular, we can define the cuspidal block $D^b_{cusp}(\LM_N)$ and applying Theorem~\ref{prototype}, we show that 
we have a similar statement as for $\M_{0,n}$ (Prop.~\ref{semiortMN}):
\begin{prop}\label{semiort}
$D^b(\LM_N)$ admits the semi-orthogonal decomposition
$$
D^b(\LM_N)=\langle D^b_{cusp}(\LM_N), \ \{\pi_K^*D^b_{cusp}(\LM_{N\setminus K})\}_{K\subset N},\ \cO\rangle
$$
where subsets $K$ with $1\le |K|\le n-2$ are ordered 
by  increasing cardinality.
\end{prop}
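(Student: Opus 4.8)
\emph{Proof proposal.} The plan is to argue by induction on $n=|N|$, using Theorem~\ref{prototype} for the family of single-point forgetful maps $\{\pi_{\{i\}}:\LM_N\to\LM_{N\setminus\{i\}}\}_{i\in N}$ together with the inclusion--exclusion Lemma~\ref{jmgfgk}. The base case $n=1$ is immediate, since $\LM_N$ is then a point. For the inductive step I would first verify the hypotheses of Theorem~\ref{prototype} for this family: $\LM_N$ is a smooth projective toric variety; each forgetful map satisfies $R\pi_{\{i\}*}\cO=\cO$, so that $\pi_{\{i\}}^*$ is fully faithful; and since forgetting commutes, for $i\ne j$ the square formed by the four maps $\pi_{\{i\}},\pi_{\{j\}}$ is Cartesian and Tor-independent, which is what validates the base-change arguments behind the inclusion--exclusion decomposition. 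I would also record the elementary but crucial observation that $\pi_K$ factors through $\pi_{\{i\}}$ for any $i\in K$; hence an object killed by all single-point pushforwards is killed by every $R\pi_{K*}$, so that $D^b_{cusp}(\LM_N)$ of Definition~\ref{zxfbdfdfhdfd} coincides with the cuspidal block for the single-point family to which Theorem~\ref{prototype} applies.

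With these checks in place, Theorem~\ref{prototype} produces the inclusion--exclusion semi-orthogonal decomposition of $D^b(\LM_N)$ whose first block is $D^b_{cusp}(\LM_N)$ and whose remaining blocks are assembled from the subcategories $\pi_{\{i\}}^*D^b(\LM_{N\setminus\{i\}})$. To expand these, I would invoke the inductive hypothesis: each $D^b(\LM_{N\setminus\{i\}})$ already decomposes as asserted, into blocks $\pi_{K'}^*D^b_{cusp}(\LM_{(N\setminus\{i\})\setminus K'})$ and a terminal $\cO$. Pulling back along $\pi_{\{i\}}^*$ and using the composition rule $\pi_{\{i\}}^*\pi_{K'}^*=\pi_{\{i\}\cup K'}^*$ turns these into the desired blocks $\pi_{\{i\}\cup K'}^*D^b_{cusp}(\LM_{N\setminus(\{i\}\cup K')})$.

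The combinatorial heart of the argument, and the step I expect to be the main obstacle, is the bookkeeping that collapses this iterated expansion onto a decomposition indexed by \emph{subsets} rather than ordered sequences of forgotten points. For a fixed $K$ with $|K|=k$, the block $\pi_K^*D^b_{cusp}(\LM_{N\setminus K})$ is produced once for each choice of the first forgotten element $i\in K$, hence naively with multiplicity $k$; the overlaps among the different $\pi_{\{i\}}^*D^b(\LM_{N\setminus\{i\}})$ are exactly what the inclusion--exclusion Lemma~\ref{jmgfgk} is designed to reconcile, and I would use it to show that each $\pi_K^*D^b_{cusp}(\LM_{N\setminus K})$ survives with multiplicity one. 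The terminal piece $\cO$ (the case $|K|=n-1$, where $\LM_{N\setminus K}$ is a point and $D^b_{cusp}$ is generated by $\cO$) collapses to a single copy in the same way, which is why subsets are listed only up to $|K|\le n-2$ with $\cO$ appended.

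Finally I would confirm the ordering and semi-orthogonality. Ordering the surviving blocks by increasing $|K|$ corresponds to decreasing dimension of the target and matches the order produced by Theorem~\ref{prototype}. For the $\Hom$-vanishings, the key computation is that $R\pi_{K'*}\pi_K^*E=0$ whenever $K'\not\subseteq K$: base change along the Cartesian forgetful squares rewrites this pushforward as one that forgets a point of $N\setminus K$, which vanishes because $E$ is cuspidal on $\LM_{N\setminus K}$. Since $|K'|>|K|$ forces $K'\not\subseteq K$ (and $K'\ne K$ with $|K'|=|K|$ does as well), this yields orthogonality across levels in the required direction and full orthogonality within each cardinality level, completing the verification.
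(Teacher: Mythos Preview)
Your verification of the hypotheses of Theorem~\ref{prototype} contains a genuine error: the forgetful square is \emph{not} Cartesian. Already for $|N|=3$, the space $\LM_N$ is the blow-up of $\bP^2$ at three points (six torus-fixed points), whereas the fiber product $\LM_{N\setminus\{i\}}\times_{\LM_{N\setminus\{i,j\}}}\LM_{N\setminus\{j\}}\simeq\bP^1\times_{\mathrm{pt}}\bP^1=\bP^1\times\bP^1$ has only four; the comparison map $\alpha_{i,j}$ is merely birational. This is precisely why condition~\eqref{asKJGBAMS} is singled out in the paper as the only nontrivial hypothesis: it holds because $\alpha_{i,j}$ is birational and the fiber product $Y$ has toroidal, hence rational, singularities, so that $R\alpha_{i,j*}\cO_{\LM_N}=\cO_Y$. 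Your assertion that the square is Cartesian would trivialize~\eqref{asKJGBAMS}, but it is false, so this step must be replaced by the rational-singularities argument.

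Beyond this, the inductive superstructure is unnecessary. Reread the conclusion of Theorem~\ref{prototype}: once~\eqref{fbzfbsdb},~\eqref{a,sbgas},~\eqref{asKJGBAMS} are checked, the theorem already delivers the full decomposition
\[
D^b(X_S)=\langle D^b_{cusp}(X_S),\ \{L\pi_K^*D^b_{cusp}(X_{S\setminus K})\}_{K\subset S},\ L\pi_S^*D^b(X_\emptyset)\rangle,
\]
with blocks indexed by all proper subsets $K$ and ordered by increasing $|K|$. The identification of the inclusion--exclusion pieces $\cT_K$ with $L\pi_K^*D^b_{cusp}(X_{S\setminus K})$ is carried out inside the proof of that theorem (see~\eqref{ajshrgafhka}); there is nothing left to expand inductively and no multiplicity bookkeeping to perform. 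The paper's proof of Proposition~\ref{semiort} is accordingly one line: verify the three conditions and invoke Theorem~\ref{prototype}.
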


Next we construct a collection $\hat\bG$ of sheaves on $\LM_N$. We note that in this definition, and in the rest of the paper, we do not always distinguish 
notationally between divisors and line bundles. 
\begin{defn}\label{torsion block}\label{defG}
Let $\bG_N=\{G_1^\vee,\ldots,G_{n-1}^\vee\}$ be the set of following line bundles on $\LM_N$:
$$G_a=a\psi_{0}\otimes\cO\big(-(a-1)\sum_{k\in N}\de_{k}-(a-2)\sum_{k,l\in N}\de_{kl}-\ldots-\sum_{J\subset N, |J|=a-1}\de_{J}\big)
$$
for every $a=1,\ldots,n-1$.
Let $\hat\bG$ be the collection of sheaves
$$\hat\bG=\bigcup_Z(i_Z)_*[\bG_{N_1}^\vee\boxtimes\ldots\boxtimes\bG_{N_t}^\vee]$$ 
on $\LM_N$ of the form
$$\cT=(i_Z)_*\cL,\quad \cL=G_{a_1}^\vee\boxtimes\ldots\boxtimes G_{a_t}^\vee$$
for all strata $Z=Z_{N_1,\ldots,N_t}$ such that $N_i\ge 2$ for every $i$ and for all $1\leq a_i\leq |N_i|-1$.
Here $i_Z:\,Z\hookrightarrow \LM_N$ is the inclusion map.
If $t=1$ we get line bundles $\bG_N$ and for $t\geq2$ these sheaves are torsion sheaves.
 \end{defn}

\begin{thm}\label{hhbhjghjgh}\label{mainLM}
$\hat\bG$ is a full exceptional  collection in $D^b_{cusp}(\LM_N)$,
which is
equivariant under the group $S_2\times S_N$.
The number of objects in $\hat\bG$ is equal to $!n$,
the~number of derangements of $n$ objects (permutation without fixed points).
\end{thm}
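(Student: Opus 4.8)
The plan is to prove three separate assertions: that $\hat\bG$ consists of cuspidal objects, that it is an exceptional collection (with the specified ordering), that it spans $D^b_{cusp}(\LM_N)$, and finally that it carries the asserted $S_2 \times S_N$-equivariant structure with cardinality $!n$. I would organize the proof around the product structure of the boundary strata $Z_{N_1,\ldots,N_t} \simeq \LM_{N_1}\times\cdots\times\LM_{N_t}$, reducing statements about the torsion sheaves $(i_Z)_*(G_{a_1}^\vee\boxtimes\cdots\boxtimes G_{a_t}^\vee)$ to statements about the line bundles $\bG_N$ on a single Losev--Manin factor via K\"unneth and base change.

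First I would establish cuspidality. For the line bundles $G_a^\vee$ themselves, I need $R\pi_{K,*}G_a^\vee = 0$ for every forgetful map $\pi_K$ with $1\le|K|\le n-1$; I expect this to follow from the explicit description of $G_a$ in terms of $\psi_0$ and boundary divisors, by analyzing the fibers of $\pi_K$ (which are themselves Losev--Manin spaces of lower dimension or products thereof) and showing the restriction of $G_a^\vee$ to each fiber has vanishing cohomology, then applying cohomology and base change. For the torsion sheaves supported on a stratum $Z$, cuspidality should reduce to cuspidality of each tensor factor $G_{a_i}^\vee$ on $\LM_{N_i}$ together with a compatibility of the forgetful maps with the product decomposition of $Z$: forgetting a point in $K\subset N_i$ maps $Z$ to the corresponding stratum of $\LM_{N\setminus K}$, and one factor pushes forward to zero, killing the whole box product by K\"unneth.

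Next, exceptionality. I would compute $\Ext^\bullet(\cT, \cT')$ for two members $\cT = (i_Z)_*\cL$ and $\cT' = (i_{Z'})_*\cL'$. When $Z = Z'$ this reduces, again by K\"unneth, to computing $\Ext^\bullet$ between the line bundles $G_{a_i}^\vee$ and $G_{a_i'}^\vee$ on each $\LM_{N_i}$, so the core computation is that $\bG_N$ is an exceptional collection of line bundles on a single $\LM_N$ --- which I would verify from the toric/blow-up structure using the Kapranov model and Orlov's blow-up formula, checking that $\Ext^\bullet(G_a^\vee, G_b^\vee)$ vanishes for $a > b$ and equals the ground field in degree zero for $a = b$. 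When $Z \ne Z'$ the sheaves are supported on different strata, and I would use the local-to-global spectral sequence together with the transversality/incidence geometry of the boundary strata and the local freeness of the normal bundles to compute the $\Ext$'s via $\Tor$-sheaves of the structure sheaves of $Z$ and $Z'$; the semiorthogonality will come from a dimension/weight count ensuring the relevant $\Ext$-groups vanish in the prescribed order.

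The hard part will be fullness: showing that $\hat\bG$ generates all of $D^b_{cusp}(\LM_N)$. My approach is inductive on $n$, using the semiorthogonal decomposition of Proposition~\ref{semiort} to peel off the pullback blocks $\pi_K^*D^b_{cusp}(\LM_{N\setminus K})$ and $\cO$, so that generating $D^b(\LM_N)$ by the full ``inclusion--exclusion'' collection reduces to generating the cuspidal block by $\hat\bG$. Concretely, I would take the full exceptional collection on $\LM_N$ coming from the Kapranov/blow-up model (Orlov), and reorganize it according to the semiorthogonal decomposition, identifying which generators fall into the cuspidal block; matching this count against the pullback and $\cO$ blocks should force $\hat\bG$ to span $D^b_{cusp}$. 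Finally, for the equivariance and the count, I would note that the Cremona $S_2$ swaps $\psi_0$ and $\psi_\infty$ (hence $G_a^\vee \leftrightarrow G_a^{\prime\vee}$ built from $\psi_\infty$, which I must check lies in the same collection up to the relations among classes), while $S_N$ permutes the strata $Z$ and the subsets $N_i$, so $\hat\bG$ is stable as a set and the induced action is by permutation of summands. The derangement count $!n$ I would obtain combinatorially: the members of $\hat\bG$ are indexed by ordered set partitions $N = N_1\sqcup\cdots\sqcup N_t$ with all $|N_i|\ge 2$ together with a choice of $a_i \in \{1,\ldots,|N_i|-1\}$, and I would exhibit a bijection between these data and derangements of $n$ elements, most cleanly by comparing the resulting exponential generating function $\prod(\text{block contribution})$ with the generating function $e^{-x}/(1-x)$ for derangements.
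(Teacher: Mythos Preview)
Your outline for cuspidality and equivariance is broadly aligned with the paper's approach, and the derangement count via generating functions is exactly what the paper does. However, there are two genuine gaps.

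\textbf{Exceptionality of the torsion sheaves.} When $Z = Z'$, you claim that $R\Hom((i_Z)_*\cL,(i_Z)_*\cL')$ reduces by K\"unneth to $R\Hom$ between the line bundle factors on each $\LM_{N_i}$. This is not true: pushforward along a closed embedding is not fully faithful, and the self-$\Ext$ of $(i_Z)_*\cL$ picks up contributions from the conormal bundle of $Z$ in $\LM_N$. The paper handles this explicitly (Lemma~\ref{exceptional}) by reducing to $R\Hom(i_*\cO_Z,i_*\cO_Z)$, resolving $i_*\cO_Z$ by the Koszul complex of the boundary divisors cutting out $Z$, and then checking the relevant vanishings factor by factor using that each $\cO_Z(D_i)$ restricts as $\cO\boxtimes\cdots\boxtimes(-\psi_\infty)\boxtimes(-\psi_0)\boxtimes\cdots\boxtimes\cO$. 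Your treatment of the $Z\ne Z'$ case via ``a dimension/weight count'' is also too vague; the paper's argument (Lemma~\ref{order}) passes to the smallest stratum $U$ containing both, uses Tor-independence of the transversal intersection $W=Z\cap Z'$ inside $U$, and computes factor by factor with careful bookkeeping of the normal bundle twist and the Koszul contributions from the divisors cutting out $U$.

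\textbf{Fullness.} Your proposed argument---reorganize the Orlov/Kapranov collection according to the semiorthogonal decomposition and then conclude by ``matching counts''---has a fundamental gap. A numerical match between $|\hat\bG|$ and the rank of $K_0(D^b_{cusp})$ does \emph{not} imply that $\hat\bG$ spans, because admissible subcategories with trivial $K$-group (phantoms) are not known to be absent on toric varieties; the paper itself flags this obstruction. Moreover, the Kapranov collection is adapted to a single $\psi$-class and does not split cleanly along the cuspidal SOD, so ``identifying which generators fall into the cuspidal block'' is not a well-posed step. The paper's fullness proof (Section~\ref{fullness!!!}) is substantially more elaborate: it introduces a family of auxiliary spaces $X^r_N$ (iterated blow-ups of $\bP^{n+r}$) interpolating between $\LM_N=X^{-1}_N$ and projective spaces, together with collections $\hat\bG^r_N$ and birational contractions $f_i:X^r_N\to X^{r+1}_{N\setminus i}$. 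The key technical input is Lemma~\ref{crucial}, showing that for every $\cT\in\hat\bG^{r+1}_{N}$ the cone of $L\pi_i^*R\pi_{i*}Lf_i^*\cT\to Lf_i^*\cT$ lies in the subcategory generated by $\hat\bG^r_N$. Fullness is then proved by a double induction (on dimension and on $|N|$): given $E\in D^b_{cusp}(X^r_N)$ orthogonal to $\hat\bG^r_N$, one first shows $Rf_{i*}E=0$ for all $i$ via the inductive hypothesis and the crucial lemma, forcing $\Supp E$ into a single exceptional divisor, and then finishes with Orlov's blow-up theorem plus induction on dimension. None of this machinery is visible in your sketch, and without it the argument does not close.
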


This is our main theorem, with proof occupying sections \ref{LosevManin} and \ref{fullness!!!}.
It gives a new curious formula for the number of derangements\footnote{We are unaware of a combinatorial ``bijective'' proof of this identity.}:
\begin{equation}
\sum_{k_1+\ldots+k_t=n\atop k_1,\ldots,k_t\ge 2}\left(
{n\atop k_1\ \ldots\ k_t}\right)(k_1-1)\ldots(k_t-1)=!n,
\label{derangedcount}
\end{equation}
where $\left({n\atop k_1\ \ldots\ k_t}\right)=\frac{n!}{k_1!\ldots k_t!}$.
As a corollary, we see that K-theory of $\LM_N$ is a permutation representation of $S_2\times S_n$
in a very concrete way, which should be contrasted with description of its equivariant 
Euler--Poincare polynomial as an alternating sum in \cite{BergstromMinabeLM}.
 
The ordering of $\hat\bG$ that tuns it into an exceptional collection is quite elaborate and discussed in \S\ref{LosevManin}.
The real difficulty though is to prove fullness, which is done in \S\ref{fullness!!!}. 
Note that fullness would follow at once if phantom subcategories (admissible subcategories with trivial $K$-group)
did not exist on smooth projective toric varieties.

\begin{rmk}
The line bundles $G_1,\ldots,G_{n-1}$ on $\LM_n$ may appear ad hoc, but in fact they have a very nice description in terms of the (minimal) wonderful compactification 
$\overline{\PGL_n}$ of $\PGL_n$
(which contains  $\LM_n$ as the closure of the maximal torus). Namely, they are precisely the restrictions of generators
of the nef cone of $\overline{\PGL_n}$, see Prop.~\ref{BrionIdea} for a more precise statement.
It~would be interesting to relate derived categories of $\overline{\PGL_n}$ and  $\LM_n$.
\end{rmk}

It is worth noting that we do not know any smooth projective 
toric varieties $X$ with an action of a finite group $\Gamma$ normalizing the torus action
which do not have a $\Gamma$-equivariant exceptional collection $\{E_i\}$ of maximal possible length (equal to the topological Euler characteristic of $X$).
Its existence would imply that the K-group $K_0(X)$ is a permutation $\Gamma$-module.
In the Galois setting (when $X$ is defined over a field which is not algebraically closed
and $\Gamma$ is the absolute Galois group), an analogous statement was conjectured by Merkurjev and Panin \cite{MP}.
Of course one may further wonder if $\{E_i\}$ is in fact full, which is related to the (non)-existence of phantom categories on $X$,
another difficult general open question. 

\medskip 

We refer to \cite{CT1,CT2,CT3} for the background information on birational geometry of $\M_{0,n}$, the Losev--Manin space
and other related spaces. We refer to \cite{Huy} for background on semi-orthogonal decompositions.

\subsection{Acknowledgements}
We are grateful to Alexander Kuznetsov for a  suggestion to think about the derived category of pointed curves in the equivariant setting
and for several improvements of the exposition. We are grateful to Michel Brion for pointing out the connection to the wonderful compactification of $\PGL_n$.
We thank Asher Auel, Chunyi Li, Daniel Halpern-Leistner, Emanuele Macr\`i and Dimitri Zvonkine 
for useful conversations. We are grateful to the referee for the careful reading and numerous very helpful comments. 

\smallskip 

The first named author was supported by NSF grants DMS-1529735 and DMS-1701752. The second named author was supported by NSF grants DMS-1303415 and DMS-1701704. Parts of this paper were written while the first author was visiting the Institut des Hautes \'Etudes Scientifiques in Bures-sur-Yvette, France
and the second author was visiting the Fields Institute in Toronto, Canada.


\section{An equivariant version of Orlov's blow-up theorem}\label{equiOrlovS}

Orlov's blow-up theorem \ref{OrlovI} is a categorification of the following fact.
Let $X$ be a smooth projective variety and let $Y\subseteq X$ be a smooth subvariety of codimension~$l$.
Let $\tilde X$ be the blow-up of $X$ along $Y$. We have decomposition of cohomology
with integral coefficients, see e.g.~\cite[Th.~7.31]{Voisin}
\begin{equation}\label{HomBlow}
H^*(\tilde X)\simeq\left[H^*(Y)\otimes H^+(\PP^{l-1})\right]\oplus H^*(X).
\end{equation}

Now consider the following more general situation. Let 
$Y_1,\ldots,Y_n\subseteq X$ be smooth transversal subvarieties of codimension $l$. For any subset $I\subseteq\{1,\ldots,n\}$, 
we denote by $Y_I$ the intersection $\cap_{i\in I}Y_i$. In particular, $Y_\emptyset=X$.
Let $q:\,\tX\to X$ be an iterated blow-up of (proper transforms of) $Y_1,\ldots,Y_n$.
Since the intersection is transversal, blow-ups can be done in any order.
The analogue of \eqref{HomBlow} in this situation was worked out in \cite[Prop. 6.1]{BergstromMinabe}:
\begin{equation}\label{HomItBlow}
H^*(\tilde X)\simeq\bigoplus_{I\subset\{1,\ldots,n\}\atop I\ne\emptyset}\left[H^*(Y_I)\otimes H^+(\PP^{l-1})^{\otimes|I|}\right]\oplus H^*(X),
\end{equation}
which we are going to rewrite as
$$
H^*(\tilde X)\simeq\bigoplus_{I\subset\{1,\ldots,n\}}\left[H^*(Y_I)\otimes H^+(\PP^{l-1})^{\otimes|I|}\right]
.$$
The analogue of Theorem \ref{OrlovI} is also straightforward. We fix the following notation.
Let $E_i$ be the exceptional divisor over $Y_i$ for every $i=1,\ldots,n$.
For~any subset $I\subseteq\{1,\ldots,n\}$, let 
$$E_I=q^{-1}(Y_I)=\cap_{i\in I}E_i.$$
In particular, $E_\emptyset=\tX$.
Let $i_I:\,E_I\hookrightarrow\tX$ be the inclusion.

\begin{lemma}\label{MammaMia}
Let  
$\{F_I^\beta\}$ be  a (full) exceptional collection in $D^b(Y_I)$ for every subset $I\subseteq\{1,\ldots,n\}$.
There exists a (full) exceptional collection in $D^b(\tilde X)$ with blocks
$$B_{I,J}=(i_I)_*\left[(Lq|_{E_I})^*(F_I^\beta)\left(\sum_{i=1}^n J_iE_i\right)\right]$$
for every subset $I\subseteq\{1,\ldots,n\}$ (including the empty set) and for every $n$-tuple of integers $J$
such that $J_i=0$ if $i\not\in I$ and $1\le J_i\le l-1$ for $i\in I$.

The blocks are ordered in any linear order which respects the following partial order:
$B_{I^1,J^1}$ precedes $B_{I^2,J^2}$ if $\sum\limits_{i=1}^n J^1_iE_i\ge \sum\limits_{i=1}^n J^2_iE_i$ (as effective divisors).
\end{lemma}

\begin{proof}
 We argue by induction on $n$, the case $n=1$ being Orlov's theorem.
 We decompose $q:\,\tX\to X$ as a blow-up $q_0:\,X'\to X$ of $Y_n$
 and an iterated blow-up $q':\,\tX\to X'$ of proper transforms $Y_1',\ldots,Y_{n-1}'$
 of $Y_1,\ldots,Y_{n-1}$. By~Orlov's theorem, $X'$ carries a (full) exceptional collection $E'^\alpha$, namely
$$
i'_*\left[(q_0|_E)^*(F_n^\beta)((l-1)E)\right],
\ldots,
i'_*\left[(q_0|_E)^*(F_n^\beta)(E)\right],
Lq_0^*(F^\beta_\emptyset).
$$
Here $i':\,E\hookrightarrow X'$ is the exceptional divisor
and $q_0|_E$ is a projective bundle.

More generally, for every subset $I'\subseteq\{1,\ldots,n-1\}$, let $Y'_{I'}=\cap_{i\in I'}Y'_i$ be the proper transform of $Y_{I'}$ isomorphic to the
blow-up of $Y_{I'}$ in $Y_{I'\cup\{n\}}$.
By~Orlov's theorem, $Y'_{I'}$ 
carries a (full) exceptional collection $F'^\beta_{I'}$, namely
$$
(i'_{I'})_*\left[(q_0|_{E_n^{I'}})^*(F_{I'\cup\{n\}}^\beta)((l-1)E)\right],
\ldots,\qquad\qquad\qquad\qquad$$
$$\qquad
(i'_{I'})_*\left[(q_0|_{E_n^{I'}})^*(F_{I'\cup\{n\}}^\beta)(E)\right],
L(q_0|_{Y'_{I'}})^*(F^\beta_{I'}).
$$
Here $i'_{I'}:\,E_n^{I'}\hookrightarrow Y'_{I'}$ is the exceptional divisor over $Y_{I'\cup\{n\}}$.

Applying the inductive assumption gives an exceptional collection on~$\tX$
with blocks
$$(i_{I'})_*\left[(Lq'|_{E_{I'}})^*(F'^\beta_{I'})\left(\sum_{i=1}^{n-1} J_iE_i\right)\right]$$
for every subset $I'\subseteq\{1,\ldots,n-1\}$ (including the empty set) and for every $(n-1)$-tuple of integers $J$
such that $J_i=0$ if $i\not\in I'$ and $1\le J_i\le l-1$ for $i\in I'$.

The blocks are ordered in any linear order which respects the following partial order:
$B_{I'^1,J^1}$ precedes $B_{I'^2,J^2}$ if $\sum\limits_{i=1}^{n-1} J^1_iE_i\ge \sum\limits_{i=1}^{n-1} J^2_iE_i$ (as effective divisors).
We have to check that these blocks are the same as in the statement of the lemma. It is clear that 
$$(Lq'|_{E_{I'}})^*(L(q_0|_{Y'_{I'}})^*(F^\beta_{I'}))
\simeq
(Lq|_{E_{I'}})^*(F_{I'}^\beta).
$$
This takes care of the last element in $F'^\beta_{I'}$. For the rest, we have to show that
$$(i_{I'})_*\left[(Lq'|_{E_{I'}})^*\left((i'_{I'})_*\left[(q_0|_{E_n^{I'}})^*(F_{I}^\beta)(J_nE)\right]\right)\left(\sum_{i=1}^{n-1} J_iE_i\right)\right]
\simeq$$
$$\qquad\qquad\qquad
(i_I)_*\left[(Lq|_{E_I})^*(F_I^\beta)\left(\sum_{i=1}^n J_iE_i\right)\right],$$
where $I=I'\cup\{n\}$. 
It suffices to show that
$$(Lq'|_{E_{I'}})^*\left((i'_{I'})_*\left[(q_0|_{E_n^{I'}})^*(F_{I}^\beta)(J_nE)\right]\right)\left(\sum_{i=1}^{n-1} J_iE_i\right)
\simeq$$
$$\qquad\qquad\qquad
(\phi)_*\left[(Lq|_{E_I})^*(F_I^\beta)\left(\sum_{i=1}^n J_iE_i\right)\right],$$
where $\phi:\,E_{I}\hookrightarrow E_{I'}$ is the inclusion.
Applying projection formula, this reduces to 
$$(Lq'|_{E_{I'}})^*\left((i'_{I'})_*\left[(q_0|_{E_n^{I'}})^*(F_{I}^\beta)\right]\right)
\simeq(\phi)_*\left[(Lq|_{E_I})^*(F_I^\beta)\right],$$
which follows by flat base change.

The last order of business is to prove the claim about the order of blocks.
We made a choice of blowing up $Y_n$ first, accordingly the collection has blocks $B_{I',J'}$
for every subset $I'\subseteq\{1,\ldots,n-1\}$ (including the empty set) and for every $(n-1)$-tuple of integers $J'$
such that $J'_i=0$ if $i\not\in I'$ and $1\le J'_i\le l-1$ for $i\in I'$.
The blocks are ordered in any linear order which respects the following partial order:
$B_{I'^1,J'^1}\prec B_{I'^2,J'^2}$ if $\sum\limits_{i=1}^{n-1} J'^1_iE_i> \sum\limits_{i=1}^{n-1} J'^2_iE_i$ (as effective divisors).
Each block $B_{I',J'}$ is a sequence of blocks $B_{I,J}$ from the statement of the Lemma, where 
$I\cap\{1,\ldots,n-1\}=I'$ and $J_i=J'_i$ for $i<n$. They are ordered in the decreasing order by $J_n$.
In particular, if $B_{I^1,J^1}$ precedes $B_{I^2,J^2}$ then either $\sum\limits_{i=1}^n J^1_iE_i-\sum\limits_{i=1}^n J^2_iE_i$ is an effective divisor
or $\sum\limits_{i=1}^{n-1} J^2_iE_i-\sum\limits_{i=1}^{n-1} J^1_iE_i$ is not effective.
Therefore, it suffices to prove that, for any two blocks $B_{I^1,J^1}$ and $B_{I^2,J^2}$,
if $\sum\limits_{i=1}^{n} J^1_iE_i-\sum\limits_{i=1}^{n} J^2_iE_i$ is not an effective divisor, then
$\{B_{I^1,J^1},  B_{I^2,J^2}\}$ form an exceptional sequence. 
If $\sum\limits_{i=1}^{n-1} J^1_iE_i-\sum\limits_{i=1}^{n-1} J^2_iE_i$ is not effective then we are done by the above.
But if it is effective, then $\sum\limits_{i=2}^{n} J^1_iE_i-\sum\limits_{i=2}^{n} J^2_iE_i$ is not effective,
and we are again done by the above (by changing the order of blow-ups and blowing up $Y_1$ first).
\end{proof}

\begin{rmk}
The same argument shows, more generally, 
 that even in the absence of exceptional collections, there exists a
 semi-orthogonal decomposition (s.o.d.)  of $D^b(\tilde X)$ with blocks
$$B_{I,J}=(i_I)_*\left[(Lq|_{E_I})^*(D^b(Y_I))\left(\sum_{i=1}^n J_iE_i\right)\right]$$
(the same notation and order as in the lemma).
We  stated the lemma for exceptional collections 
with an eye towards its equivariant version.
\end{rmk}

Continuing with the set-up of Lemma~\ref{MammaMia}, let $G$ be a finite group acting on $X$ permuting $Y_1,\ldots,Y_n$.
Then it also acts on $\tilde X$ and the morphism $q$ is $G$-equivariant. Let $G_I\subseteq G$ be the normalizer of $Y_I$ for each subset 
$I\subseteq \{1,\ldots,n\}$ (in particular, $G_\emptyset=G$). 

\begin{lemma}\label{BigDaddy}
Let  
$\{F_I^\beta\}$ be  a (full) $G_I$-equivariant exceptional collection in $D^b(Y_I)$ for every subset $I\subseteq\{1,\ldots,n\}$.
We assume that if $Y_I=gY_{I'}$ for some $g\in G$, then $\{F_I^\beta\}=g\{F_{I'}^\beta\}$.
There exists a (full) $G$-equivariant exceptional collection in $D^b(\tilde X)$ with blocks
$B_{I,J}$ (the same as in Lemma~\ref{MammaMia}).
\end{lemma}

\begin{proof}
It suffices to observe that the blocks $B_{I,J}$ are permuted by $G$.
\end{proof}

Next we recall a few facts and notation from \cite{BergstromMinabe} in order to prove Theorem~\ref{BMapproach}.
The subgroup $S_k \times S_{n-k}\subseteq S_n$ preserves the weight $\bold a$ and therefore acts on
$\M^n_{k,l}$.
 We have $(S_k \times S_{n-k})$-equivariant reduction morphisms:
\begin{equation}\label{HassettTower}
\M^n_{k,1}\to\M^n_{k,2}\to\ldots\to\M^n_{k,r(n,k)},
\end{equation}
where the first map is an isomorphism.
Each of the maps in \eqref{HassettTower} is an iterated blow-up of transversal loci of the same codimension
permuted by $S_k\times S_{n-k}$. 
Specifically, for every subset $I\subset\{k+1,\ldots,n\}$ of cardinality $l+1$, let $\M^n_{k,l+1}(I)\subseteq\M^n_{k,l+1}$
be the closure of the locus where points marked by $I$ collide.
The reduction morphism $\M^n_{k,l}\to\M^n_{k,l+1}$ is the blow-up along the transversal union $\cup_I\M^n_{k,l+1}(I)$
of sub varieties of codimension~$l$,
where $I$ runs over all subsets of $\{k+1,\ldots,n\}$ of cardinality $l+1$ \cite[Lemma~3.1]{BergstromMinabe}.
Intersections of these loci are described in \cite[Section 3.2]{BergstromMinabe} as follows.
Let $I_1,\ldots,I_m\subset \{k+1,\ldots,n\}$ are subsets of cardinality $l+1$. Then $\cap_{i=1}^m\M^n_{k,l+1}(I_i)\ne\emptyset$
if and only if the subsets $I_1,\ldots,I_m$ are disjoint. In this case the intersection
is isomorphic to $\M^{n-lm}_{k+m,l+1}$. Moreover, the stabilizer of this stratum in $S_k\times S_{n-k}$ acts on it through a subquotient
contained in $S_{k+m, n-lm-k-m}$. Applying Lemma~\ref{BigDaddy} proves Theorem~\ref{BMapproach}.



\section{The cuspidal block}\label{zfasfhasfh}

Recall that by Def. \ref{zxfbdfdfhdfd}, we call an object 
$E\in D^b(X)$ {\em cuspidal} with respect to a given collection of morphisms $\pi_i:\,X\to X_i$ ($i\in I$)
between smooth projective varieties, if 
$${R\pi_i}_*E=0\quad \hbox{\rm for every}\ i\in I.$$
The {\em cuspidal block} is the full triangulated subcategory of cuspidal objects
$$D^b_{cusp}(X)\subset D^b(X).$$

\begin{lemdef}\label{support}
In the set-up of Definition~\ref{zxfbdfdfhdfd},
the support of any cuspidal object is a union of irreducible 
closed subsets $Z\subset X$ such that
$$\dim\, \pi_i(Z)<\dim\, Z\quad \hbox{\rm for every}\ i\in S.$$
We call any subset $Z$ with this property (independently of whether they are the support of a cuspidal object or not) \emph{massive}.
Recall that the topological support of an object $E\in D^b(X)$ is the support of its cohomology sheaves. 
\end{lemdef}

\begin{proof}
Let $Z$ be the topological support of $E\in D^b(X)$. 
Suppose $Z$ contains an irreducible component $Z_0$
such that $\dim\, \pi_i(Z_0)=\dim\, Z_0$. 
We  denote $\pi:=\pi_i$ and $Y:=X_i$ as we won't need other maps and spaces.
By passing to an open subset of $Y$ and taking its preimage under $\pi$, we can assume that
$Z$ is a disjoint union of $Z_0$ and $Z_1$ (with $Z_1$ possibly empty and not necessarily irreducible). 
We may also assume that $\pi|_{Z_0}$ is finite.
It is well-known (\cite[Section~2]{Orlov formal}) that by changing $E$ to an isomorphic object, 
we may assume that $E$ is a bounded complex of sheaves supported on $Z$.
Thus $E=Ri_*\tilde E$, where $i:\,\tilde Z\hra X$ is an infinitesimal  thickening of $Z$ and $\tilde E\in D^b(\tilde Z)$.
Note that $\tilde Z$ is a disjoint union of subschemes $\tilde Z_0$ and $\tilde Z_1$ (with reduced subschemes 
$Z_0$ and $Z_1$). In~particular, $\tilde E=\tilde E_0\oplus \tilde E_1$, where $\tilde E_0$, resp., $\tilde E_1$, is a 
pull-back of $\tilde E$ to $Z_0$, resp.~$Z_1$. It follows that $R\tilde\pi_*(\tilde E_0)=0$ 
where $\tilde\pi=\pi\circ i$. Since $\tilde E_0\ne0$ and the map $\tilde\pi$ is affine, this gives a contradiction.
Indeed, if $\pi:\,X\to Y$ is an affine morphism of schemes then $R\pi_*E=0$ for some $E\in D_{QCoh}(\cO_X)$ implies that $E=0$,
see \cite[\href{http://stacks.math.columbia.edu/tag/0AVV}{Tag 0AVV}]{stacks-project}.
\end{proof}

We refer to the survey \cite{Kuznetsov Rat} for definitions and basic facts concerning semi-orthogonal decompositions in algebraic geometry.
The following is well-known, see e.g.~\cite[Lemma~2.4]{Kuznetsov Lefschetz}

\begin{prop}\label{asgasga}
Let $\pi:\,X\to Y$ be a morphism of smooth projective varieties such that $R\pi_*\cO_X=\cO_Y$.
Then $L\pi^*D^b(Y)$ is an admissible subcategory of $D^b(X)$ and 
there is a semi-orthogonal decomposition
$$D^b(X)=\langle D^b_{cusp}(X), L\pi^* D^b(Y)\rangle.$$
In particular,  $D^b_{cusp}(X)$ is an admissible subcategory. 
\end{prop}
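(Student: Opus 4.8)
The statement is a standard consequence of the theory of semi-orthogonal decompositions induced by a morphism satisfying $R\pi_*\cO_X = \cO_Y$, so the plan is to verify the two ingredients of an admissible semi-orthogonal decomposition: that $L\pi^*D^b(Y)$ is a full faithful embedding with a well-behaved image, and that its right orthogonal is exactly $D^b_{cusp}(X)$.

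I would start by establishing that $L\pi^*\colon D^b(Y) \to D^b(X)$ is fully faithful. The key tool is the projection formula together with the hypothesis $R\pi_*\cO_X = \cO_Y$: for $A, B \in D^b(Y)$ one computes
$$\Hom_{D^b(X)}(L\pi^*A, L\pi^*B) = \Hom_{D^b(Y)}(A, R\pi_*L\pi^*B) = \Hom_{D^b(Y)}(A, B \otimes R\pi_*\cO_X) = \Hom_{D^b(Y)}(A, B),$$
using $R\pi_*L\pi^*B \cong B \otimes^{L} R\pi_*\cO_X$ (projection formula) and then the hypothesis. This shows $L\pi^*$ is fully faithful, so $L\pi^*D^b(Y)$ is an admissible subcategory provided $L\pi^*$ has both adjoints — the left adjoint is $R\pi_*$ and the right adjoint exists because $\pi$ is a morphism of smooth projective varieties (Serre functors supply the second adjoint formally). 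Admissibility then follows from the general principle that a fully faithful functor between such categories with a right (and left) adjoint has admissible image.

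Next I would identify the right orthogonal $(L\pi^*D^b(Y))^\perp$. An object $E \in D^b(X)$ lies in this orthogonal precisely when $\Hom(L\pi^*A, E) = 0$ for all $A \in D^b(Y)$. By adjunction this equals $\Hom(A, R\pi_*E) = 0$ for all $A$, which forces $R\pi_*E = 0$. By Definition~\ref{zxfbdfdfhdfd} this is exactly the condition that $E$ is cuspidal (here there is a single morphism $\pi$, so the index set is a singleton). Hence $(L\pi^*D^b(Y))^\perp = D^b_{cusp}(X)$, and the admissibility of $L\pi^*D^b(Y)$ yields the semi-orthogonal decomposition
$$D^b(X) = \langle D^b_{cusp}(X),\ L\pi^*D^b(Y)\rangle,$$
with $D^b_{cusp}(X)$ being the left orthogonal component, hence itself admissible.

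The only genuinely delicate point is confirming that $L\pi^*D^b(Y)$ is \emph{admissible} rather than merely a fully faithful embedding, since admissibility is what guarantees the orthogonal complement is again admissible and gives a true semi-orthogonal decomposition. The hard part is therefore verifying the existence of the adjoint functors in the bounded (rather than merely quasi-coherent) setting and checking they preserve $D^b$; this is where smoothness and projectivity of $X$ and $Y$ are used, via Grothendieck–Verdier duality to produce the right adjoint $L\pi^*(-) \otimes \omega_{X/Y}[\dim X - \dim Y]$ of $R\pi_*$ and to control coherence. Since this is the content of the cited \cite[Lemma~2.4]{Kuznetsov Lefschetz}, I would either invoke it directly or spell out the adjunction computation above and remark that admissibility is then automatic.
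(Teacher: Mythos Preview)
Your argument is correct and is exactly the standard proof; the paper itself does not give a proof but simply cites \cite[Lemma~2.4]{Kuznetsov Lefschetz}, whose content is precisely the computation you spell out. One small slip: you write ``the left adjoint is $R\pi_*$'', but $R\pi_*$ is the \emph{right} adjoint of $L\pi^*$ (as you yourself use correctly two lines later when computing $\Hom(L\pi^*A,E)=\Hom(A,R\pi_*E)$); the \emph{left} adjoint of $L\pi^*$ is what Serre functors then supply.
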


Classical situations of this sort are provided 
by Orlov's theorems \cite{Orlov blow-up} on derived categories of a projective bundle and of a blow-up, which can be reformulated as follows:

\begin{thm}\label{OrlovI}
Let $\pi:\,X\to Y$ be a projective bundle of rank $r$ (with $Y$ a smooth projective variety).
Then $D^b_{cusp}(X)$ is an admissible subcategory of $D^b(X)$ and $D^b_{cusp}(X)$ has a semi-orthogonal decomposition
$$\langle 
\pi^*D^b(Y)\mathop{\otimes}\cO_\pi(-r),\ldots, 
\pi^*D^b(Y)\mathop{\otimes}\cO_\pi(-1)\rangle$$
\end{thm}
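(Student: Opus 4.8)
The plan is to read the asserted decomposition as Orlov's projective bundle formula with its final block stripped off, and then to recognize that deleted block as $L\pi^*D^b(Y)$, so that Proposition~\ref{asgasga} carries out the identification of what remains with the cuspidal block. In this set-up there is a single morphism $\pi$, so $D^b_{cusp}(X)$ is taken with respect to $\pi$ alone.

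First I would invoke Orlov's theorem on a $\bP^r$-bundle $\pi:\,X\to Y$ in its standard form, namely the semi-orthogonal decomposition into $r+1$ blocks
$$D^b(X)=\langle \pi^*D^b(Y)\otimes\cO_\pi(-r),\ldots,\pi^*D^b(Y)\otimes\cO_\pi(-1),\ \pi^*D^b(Y)\rangle.$$
Since the fibres are projective spaces, $R\pi_*\cO_X=\cO_Y$, so Proposition~\ref{asgasga} applies and yields
$$D^b(X)=\langle D^b_{cusp}(X),\,L\pi^*D^b(Y)\rangle,$$
where moreover $L\pi^*=\pi^*$ because $\pi$, being a projective bundle, is flat. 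The crucial observation is that the rightmost block $\pi^*D^b(Y)$ of Orlov's decomposition is \emph{precisely} this last factor $L\pi^*D^b(Y)$.

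Next I would coarsen Orlov's decomposition by bracketing its first $r$ blocks into a single admissible subcategory
$$\cB:=\langle \pi^*D^b(Y)\otimes\cO_\pi(-r),\ldots,\pi^*D^b(Y)\otimes\cO_\pi(-1)\rangle,$$
which is legitimate since consecutive blocks of a semi-orthogonal decomposition may always be grouped. This produces a two-block decomposition $D^b(X)=\langle \cB,\,L\pi^*D^b(Y)\rangle$. Comparing it with the decomposition furnished by Proposition~\ref{asgasga} and using that the left factor of a two-step semi-orthogonal decomposition is uniquely determined as the right orthogonal $(L\pi^*D^b(Y))^\perp$ of the right factor, I conclude $\cB=D^b_{cusp}(X)$, which is exactly the claim. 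The underlying identification $D^b_{cusp}(X)=(L\pi^*D^b(Y))^\perp$ is just the adjunction $\Hom(L\pi^*F,E)=\Hom(F,R\pi_*E)$ combined with the fact that $R\pi_*E\in D^b(Y)$ (as $\pi$ is proper), so that testing against $F=R\pi_*E$ detects the vanishing $R\pi_*E=0$.

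I do not expect a genuine obstacle here, as the statement is essentially a repackaging of Orlov's theorem; the only delicate points are bookkeeping. Concretely, I would double-check that the blocks with twists $-r,\ldots,-1$ are indeed the cuspidal ones by a direct projection-formula computation, $R\pi_*(\pi^*F\otimes\cO_\pi(-a))=F\otimes R\pi_*\cO_\pi(-a)=0$ for $1\le a\le r$, since $H^\bullet(\bP^r,\cO(-a))=0$ in that range, while the $0$-twist block $\pi^*D^b(Y)$ is visibly non-cuspidal. I would likewise confirm $R\pi_*\cO_X=\cO_Y$ so that Proposition~\ref{asgasga} is genuinely available, and note the flatness giving $L\pi^*=\pi^*$; with these verified, the argument above closes.
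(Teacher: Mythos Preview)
Your proposal is correct and matches the paper's treatment: the paper does not give a proof of this theorem but presents it explicitly as a reformulation of Orlov's classical projective bundle theorem \cite{Orlov blow-up}, and your argument is precisely how one extracts that reformulation from the standard decomposition together with Proposition~\ref{asgasga}. The verifications you flag (projection formula for the twists $-r,\ldots,-1$, $R\pi_*\cO_X=\cO_Y$, flatness of $\pi$) are the right ones and there is nothing to add.
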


\begin{thm}\label{OrlovII}
Let $p:\,X\to Y$ be a blow-up of a smooth subvariety~$Z$ of codimension $r+1$ of 
a smooth projective variety $Y$. Let  $i:\,E\to X$ be the exceptional divisor and let $\pi=p|_Z$.
Then $D^b_{cusp}(X)$ is an admissible subcategory of $D^b(X)$ and has a semi-orthogonal decomposition
$$\left\langle 
{Ri}_*\left[\pi^*D^b(Z)\mathop{\otimes}\cO_\pi(-r)\right],\ldots, 
{Ri}_*\left[\pi^*D^b(Z)\mathop{\otimes}\cO_\pi(-1)\right]\right\rangle.$$
\end{thm}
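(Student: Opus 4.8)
The plan is to deduce this from Orlov's original blow-up theorem \cite{Orlov blow-up} together with Proposition~\ref{asgasga}, using uniqueness of semi-orthogonal complements. Orlov's theorem provides a semi-orthogonal decomposition of the whole category
$$D^b(X)=\langle {Ri}_*[\pi^*D^b(Z)\otimes\cO_\pi(-r)],\ldots,{Ri}_*[\pi^*D^b(Z)\otimes\cO_\pi(-1)],\ Lp^*D^b(Y)\rangle,$$
in which the blow-down pullback $Lp^*D^b(Y)$ occupies the rightmost block and $\pi:\,E\to Z$ is the structure map of the exceptional divisor, a $\PP^r$-bundle. First I would record that, since $Z$ is a smooth center in the smooth variety $Y$, one has $Rp_*\cO_X=\cO_Y$; this is the classical computation of the higher direct images of the structure sheaf of a blow-up along a smooth center. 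Proposition~\ref{asgasga} then applies verbatim to $p:\,X\to Y$ and yields both the admissibility of $D^b_{cusp}(X)$ and the decomposition $D^b(X)=\langle D^b_{cusp}(X),\ Lp^*D^b(Y)\rangle$.

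The two decompositions share the same rightmost block $Lp^*D^b(Y)$, which is admissible, so its left orthogonal complement is uniquely determined. The cuspidal decomposition identifies this complement as $D^b_{cusp}(X)$, while Orlov's identifies it as $\langle {Ri}_*[\pi^*D^b(Z)\otimes\cO_\pi(-r)],\ldots,{Ri}_*[\pi^*D^b(Z)\otimes\cO_\pi(-1)]\rangle$. By uniqueness the two subcategories coincide, which is precisely the claimed decomposition of $D^b_{cusp}(X)$.

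To make the argument self-contained I would also verify directly that each block ${Ri}_*[\pi^*D^b(Z)\otimes\cO_\pi(-k)]$ is genuinely cuspidal, i.e.~annihilated by $Rp_*$. Writing $j:\,Z\hookrightarrow Y$ for the inclusion and using $p\circ i=j\circ\pi$, the projection formula gives $Rp_*{Ri}_*[\pi^*F\otimes\cO_\pi(-k)]=Rj_*\bigl(F\otimes R\pi_*\cO_\pi(-k)\bigr)$ for $F\in D^b(Z)$. Since $\pi:\,E\to Z$ is the projectivization $\PP(N_{Z/Y})$ with fibers $\PP^r$ and $H^\bullet(\PP^r,\cO(-k))=0$ for $1\le k\le r$, we get $R\pi_*\cO_\pi(-k)=0$ and the pushforward vanishes. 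This places each Orlov block inside $D^b_{cusp}(X)$ independently of the orthogonality bookkeeping, and combined with the generation statement of Orlov's theorem it re-proves the coincidence of the two subcategories.

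The only real subtlety is matching conventions: one must ensure that the cited form of Orlov's theorem places $Lp^*D^b(Y)$ on the correct (rightmost) side and lists the exceptional blocks in the stated order $\cO_\pi(-r),\ldots,\cO_\pi(-1)$, so that the uniquely determined left complement is exactly the span appearing in the statement. Once the orientations are aligned, the result is a formal consequence of the uniqueness of semi-orthogonal complements, and I expect no genuine obstacle beyond this bookkeeping.
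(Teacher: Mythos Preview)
Your argument is correct and is exactly the intended justification: the paper states Theorem~\ref{OrlovII} without proof, presenting it as a reformulation of Orlov's blow-up theorem \cite{Orlov blow-up}, and your derivation via Proposition~\ref{asgasga} and uniqueness of the left orthogonal to $Lp^*D^b(Y)$ is precisely how that reformulation is obtained. The direct check that each block ${Ri}_*[\pi^*D^b(Z)\otimes\cO_\pi(-k)]$ lies in $D^b_{cusp}(X)$ is a helpful sanity check but, as you note, already implied by the uniqueness argument.
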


In order to generalize Proposition~\ref{asgasga} to the set-up of several morphisms, we impose compatibility conditions. 
In subsequent sections we will consider several variants of moduli spaces of rational pointed curves, 
which will all fit into this framework.

\begin{thm}\label{prototype}
Let $\NN$ be the category of finite subsets of a fixed set with inclusions as morphisms.
Let $X$ be a contravariant functor from $\NN$
to the category of smooth projective varieties. For every $T\subseteq S$, we refer to the  morphism $X_S\to X_T$
as {\em forgetful map} and denote it by $\pi_{S\setminus T}$. 
We impose three assumptions: 
\begin{equation}\label{fbzfbsdb}
{R\pi_i}_*\cO_{X_S}=\cO_{X_{S\setminus \{i\}}}\quad \hbox{\rm for every}\ i\in S;
\end{equation} 
for all $i, k\in S$, $i\neq k$, the morphisms
\begin{equation}\label{a,sbgas}
\pi_i: X_{S\setminus \{k\}}\ra X_{S\setminus \{i, k\}}, \pi_k: X_{S\setminus\{i\}}\ra X_{S\setminus \{i,k\}} \hbox{ are $\Tor$-independent}
\end{equation} 
(as defined in \cite[Def. 36.21.2]{stacks-project})
and if we let
$$Y:=X_{S\setminus\{i\}}\times_{X_{S\setminus\{i, k\}}}X_{S\setminus\{k\}}$$
and $\alpha_{i,k}:\,X_S\to Y$ is the map induced by $\pi_i$ and~$\pi_k$, we have
\begin{equation}\label{asKJGBAMS}
{R\alpha_{i,k}}_*\cO_{X_S}=\cO_{Y}
\end{equation}
Under these assumptions we have a semi-orthogonal decomposition (s.o.d.)
$$
D^b(X_S)=\langle D^b_{cusp}(X_S), \quad \{L\pi_K^*D^b_{cusp}(X_{S\setminus K})\}_{K\subset S},\quad L\pi_S^*D^b(X_\emptyset)\rangle,
$$
where $K$ runs over proper subsets of $S$ in order of increasing cardinality. In~particular, 
$D^b_{cusp}(X_S)$ is an admissible subcategory of $D^b(X_{S})$.
\end{thm}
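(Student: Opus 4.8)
The plan is to prove the semi-orthogonal decomposition by induction on $|S|$, using Proposition~\ref{asgasga} repeatedly together with the compatibility conditions \eqref{a,sbgas} and \eqref{asKJGBAMS} to control how the cuspidal blocks at different levels interact. First I would fix a single element $i\in S$ and apply Proposition~\ref{asgasga} to the forgetful map $\pi_i:\,X_S\to X_{S\setminus\{i\}}$, which is legitimate by assumption \eqref{fbzfbsdb}. This gives the first coarse decomposition
$$D^b(X_S)=\langle D^b_{cusp,\pi_i}(X_S),\ L\pi_i^*D^b(X_{S\setminus\{i\}})\rangle,$$
where the subscript $\pi_i$ emphasizes that so far I have only imposed vanishing of ${R\pi_i}_*$, not of all forgetful pushforwards. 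The idea is then to feed the inductive s.o.d.\ of $D^b(X_{S\setminus\{i\}})$ into the right-hand summand after applying $L\pi_i^*$, and to peel off the genuinely cuspidal block $D^b_{cusp}(X_S)$ from the left-hand summand by imposing the remaining vanishing conditions one map at a time.

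Next I would carry out the peeling more carefully. The honest cuspidal category $D^b_{cusp}(X_S)$ is the intersection $\bigcap_{i\in S}\ker({R\pi_i}_*)$, so starting from $D^b_{cusp,\pi_i}(X_S)$ I must further decompose according to the remaining maps $\pi_k$, $k\neq i$. The key compatibility is \eqref{a,sbgas}: the $\Tor$-independence of $\pi_i$ and $\pi_k$ over $X_{S\setminus\{i,k\}}$ guarantees base-change formulas, so that an object killed by ${R\pi_i}_*$ has its ${R\pi_k}_*$ computed compatibly on the fiber product $Y$, and condition \eqref{asKJGBAMS} ensures that $L\alpha_{i,k}^*$ behaves like the pullbacks in Proposition~\ref{asgasga} (i.e.\ ${R\alpha_{i,k}}_*\cO_{X_S}=\cO_Y$ lets me apply that proposition to $\alpha_{i,k}$ as well). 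Assembling these, I would obtain that the subcategories $L\pi_K^*D^b_{cusp}(X_{S\setminus K})$, for $K$ ranging over proper subsets, are semi-orthogonal in the stated order of increasing $|K|$, with $D^b_{cusp}(X_S)$ first and $L\pi_S^*D^b(X_\emptyset)=L\pi_S^*D^b(\mathrm{pt}\text{-type base})$ last. Admissibility of each block follows from admissibility in Proposition~\ref{asgasga} together with the fact that fully faithful pullbacks of admissible subcategories along the $\pi_K$ remain admissible.

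The main obstacle, and the part requiring genuine care, is verifying the semi-orthogonality relations \emph{between} the blocks $L\pi_K^*D^b_{cusp}(X_{S\setminus K})$ for incomparable or nested subsets $K$, rather than the existence of the decomposition of any single step. Concretely, for $K\neq K'$ I must show that $\Hom(L\pi_K^*A,\,L\pi_{K'}^*B)$ vanishes in the prescribed direction whenever $A,B$ are cuspidal at their respective levels. By adjunction this reduces to computing $R(\pi_K)_*L\pi_{K'}^*B$ (or a pushforward thereof) and using that $B$ is cuspidal, i.e.\ that it is annihilated by forgetting any point in $S\setminus K'$. This is exactly where the $\Tor$-independence and the fiber-product condition \eqref{asKJGBAMS} must be invoked to justify base change along the commutative squares of forgetful maps: one rewrites the composite $\pi_K\circ(\text{pullback along }\pi_{K'})$ through the fiber products $Y$ appearing in \eqref{a,sbgas}, applies flat-type base change to convert the pullback-then-pushforward into a pushforward-then-pullback, and then uses cuspidality of $B$ to kill the result. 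I would organize this as a single two-element lemma (the case $|K\triangle K'|$ involving one extra forgotten point) and then bootstrap to arbitrary $K,K'$ by factoring forgetful maps into elementary ones, since every $\pi_{S\setminus T}$ is a composite of maps $\pi_i$ forgetting one point at a time and the hypotheses are stable under such composition.
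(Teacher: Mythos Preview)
Your proposal is essentially correct and identifies the same key ingredients as the paper: the base-change identity $R\pi_{i*}L\pi_k^*\simeq L\pi_k^*R\pi_{i*}$ derived from \eqref{a,sbgas} and \eqref{asKJGBAMS}, adjunction to reduce semi-orthogonality to pushforward vanishing, and cuspidality to conclude. However, the organization is genuinely different.

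The paper does \emph{not} argue by induction on $|S|$. Instead it first proves an abstract ``inclusion--exclusion'' lemma (Lemma~\ref{jmgfgk}): given s.o.d.'s $\cT=\langle A_i,B_i\rangle$ for $i=1,\ldots,n$ whose projection functors $\beta_i$ preserve every $A_j$ and every $B_j$, one obtains a single s.o.d.\ $\cT=\langle\cT_K\rangle_K$ with $\cT_K=(\bigcap_{i\notin K}A_i)\cap(\bigcap_{i\in K}B_i)$. Theorem~\ref{prototype} is then a direct application with $A_i=\ker R\pi_{i*}$, $B_i=L\pi_i^*D^b(X_{S\setminus\{i\}})$, $\beta_i=L\pi_i^*R\pi_{i*}$; the preservation conditions $\beta_i(A_j)\subset A_j$ and $\beta_i(B_j)\subset B_j$ follow immediately from the base-change identity, and the identification $\bigcap_{i\in K}B_i=L\pi_K^*D^b(X_{S\setminus K})$ from $\beta_1\circ\cdots\circ\beta_k=L\pi_K^*R\pi_{K*}$.

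Your inductive approach would work, but the ``peeling'' step---decomposing $\ker R\pi_{i*}$ further according to the other $\pi_k$---amounts to checking that the s.o.d.\ $\langle A_k,B_k\rangle$ restricts to $A_i$, which is exactly the content of $\beta_k(A_i)\subset A_i$. So you would end up reproving the substance of Lemma~\ref{jmgfgk} in situ. The paper's abstraction buys a cleaner separation of the categorical combinatorics from the geometric input, and a reusable statement; your route is more concrete but carries the bookkeeping of nested kernels through each inductive step.
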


Following a suggestion of Kuznetsov,
we start with an abstract ``inclusion--exclusion'' principle in triangulated categories.
Perhaps we should remark that semi-orthogonal decompositions do not intersect well in general,
as a simple example of $D^b(\bP^1)=\langle \cO,\cO(1)\rangle=\langle \cO(2),\cO(3)\rangle$ shows.
However, we have the following.

\begin{lemma}\label{jmgfgk}
Let $\cT$ be a triangulated category with several s.o.d.
$$\cT = \langle A_1,B_1\rangle = \langle A_2,B_2\rangle = \ldots = \langle A_n,B_n\rangle.$$
Suppose the projection functors $\beta_i:\,\cT\to B_i$ (in the $i$-th decomposition)
have the property that, for every $j$,
$$\beta_i(A_j) \subset A_j,\quad \beta_i(B_j) \subset B_j.$$
Then we have a s.o.d.
$$\cT = \langle \cT_K \rangle_{K},\quad\hbox{\rm where}\quad \cT_K = (\cap_{i \not\in K} A_i) \cap (\cap_{i \in K}B_i)$$
and $K$ runs over  subsets of $\{1,\ldots,n\}$ in the order of increasing cardinality.
In~particular, $\cT_\emptyset=A_1\cap\ldots A_n$ is an admissible subcategory of $\cT$.
\end{lemma}

\begin{proof}
For all subsets $T\subseteq S:=\{1,\ldots,n\}$, we consider a full triangulated 
subcategory $A_T=\cap_{i\in T} A_i$.
We prove more generally that there is a semi-orthogonal decomposition
$A_T= \langle \cT_K \rangle$,
where $K$ runs over  subsets of $S$ containing $T$ in order of increasing cardinality.
The case $T=\emptyset$ is the statement in the theorem. 

We argue by induction on $n=|S|$ and by downwards induction on $|T|$ for a fixed $n$. 
If $n=1$ or $T=S$ then there is nothing to prove. Let $i\in S\setminus T$. Without loss of generality we assume $i=1$.

We claim that the semi-orthogonal decomposition
$\cT = \langle A_1,B_1\rangle$
 induces a semi-orthogonal decomposition
 \begin{equation}\label{ajshdbfa}
 A_T = \langle A_T\cap A_1, A_T\cap B_1\rangle.
 \end{equation}
 Indeed, the semi-orthogonality is obvious and moreover every object $X$ in $A_T$ fits into a distinguished triangle 
 $$\beta_1(X)\to X\to Y \to$$ with $Y\in A_1$.
 Since $\beta_1$ preserves $A_T$ by our assumptions, $\beta_1(X)\in A_T\cap B_1$.
 It follows that $Y\in A_T$ as well.

By the induction assumption, we have semi-orthogonal decompositions
$$A_T\cap A_1=A_{T\cup\{1\}}= \langle \cT_K \rangle,\quad\left(\hbox{\rm resp.}\quad A_T= \langle \cT'_{K'} \rangle\right),$$
where $K$  (resp.~$K'$) runs over  subsets of $S$ containing ${T\cup\{1\}}$
(resp. over  subsets of $S\setminus\{1\}$ containing $T$) and 
$$\cT'_{K'}= (\cap_{i \not\in K\cup\{1\}} A_i) \cap (\cap_{i \in K}B_i).$$
We claim that the semi-orthogonal decomposition $A_T= \langle \cT'_{K'}\rangle$
induces the semi-orthogonal decomposition
$$A_T\cap B_1 = \langle \cT'_{K'}\cap B_1 \rangle=\langle \cT_{K'\cup\{1\}} \rangle.$$
Indeed, the semi-orthogonality is clear. By definition of the semi-orthogonal decomposition,
for every object $X\in A_T\cap B_1$, we can write a sequence of morphisms (``filtration'')
$$0\to \ldots \to T_{K'_1}\to T_{K'_2}\to \ldots \to X\to 0,$$ such that every  morphism is included in the distinguished triangle
$$T_{K'_1}\to T_{K'_2}\to X_{K'_1}\to$$
with $X_{K'_1}\in \cT'_{K'_1}$.
Applying the functor $\beta_1$ to this sequence, and using our assumptions, gives a filtration of $X$
with subquotients $\beta_1(X_{K'_1})\in \cT'_{K'_1}\cap B_1$.

Combining these observations with \eqref{ajshdbfa}, we get a semi-orthogonal decomposition
$$
 A_T = \langle \cT_K,  \cT_{K'\cup\{1\}}\rangle,
$$
where $K$  (resp.~$K'$) runs over  subsets of $S$ containing ${T\cup\{1\}}$
(resp. over  subsets of $S\setminus\{1\}$ containing $T$) in order of increasing cardinality.

Finally, we have to show that we can reorder blocks to put them in the order of increasing cardinality.
If $|K_1|<|K_2|$ then choose an index $j\in K_2\setminus K_1$. Then $\cT_{K_1}\subset A_j$ and $\cT_{K_2}\subset B_j$.
Thus $\cT_{K_1}\subset \cT_{K_2}^\perp$.
\end{proof}

\begin{proof}[Proof of Theorem~\ref{prototype}]
By Prop. \ref{asgasga}, we have s.o.d.'s $D^b(X_S)= \langle A_i, B_i\rangle$, where 
$$A_i=\{E\in D^b(X_S)\,|\,R{\pi_i}_*E=0\},$$
$$B_i=L\pi_i^*(D^b(X_{S\setminus\{i\}})).$$
We now apply Lemma \ref{jmgfgk} to $\langle A_i, B_i\rangle$. 
The projection operators are
$$\beta_i = L\pi_i^* R\pi_{i*}.$$
Note that for all $i, k\in S$ with $i\neq k$ and all $E\in D^b(X_{S\setminus\{k\}})$ we have
\begin{equation}\label{Tor1}
R{\pi_i}_*L{\pi_k}^*E\simeq L{\pi_k}^*R{\pi_i}_*E,
\end{equation}
since by assumption, $\pi_i$ and $\pi_k$ are  $\Tor$-independent. 
This follows from \eqref{a,sbgas} combined with cohomology and base change: if, 
$\pi'_i$ and $\pi'_k$ are the projection maps from $Y=X_{S\setminus\{i\}}\times_{X_{S\setminus \{i, k\}}}X_{S\setminus\{k\}}$ and 
$\alpha:\,X_S\to Y$ is the canonical map, we have  
$${R\pi_i}_*L\pi_k^*E={R\pi'_i}_*R\alpha_*L\alpha^*{L\pi'_k}^*E={R\pi'_i}_*L{\pi'_k}^*E=L{\pi_k}^*R{\pi_i}_*E.$$
where the second equality is by the projection formula and \eqref{asKJGBAMS}. 
It follows that
$$R\pi_{j*} L\pi_i^* R\pi_{i*} = L\pi_i^* R\pi_{j*} R\pi_{i*} = L\pi_i^* R\pi_{i*} R\pi_{j*},$$
and in particular
$$\beta_i(A_j) \subset A_j.$$
Also, 
$$L\pi_i^* R\pi_{i*} L\pi_j^* = L\pi_i^* L\pi_j^* R\pi_{i*} = L\pi_j^* L\pi_i^* R\pi_{i*},$$
and thus 
$$\beta_i(B_j) \subset B_j.$$

It remains to show that, in the notation of Lemma~\ref{jmgfgk},
$$D^b(X_S)_K=L\pi_K^*D^b_{cusp}(X_{S\setminus K})$$
for every subset $K\subset T$. Equivalently,
\begin{equation}\label{ajshrgafhka}
\bigcap_{i\in K} B_i=L\pi_K^*D^b(X_{S\setminus K}).
\end{equation}
We can assume that $K=\{1,\ldots,k\}$. Then 
it follows from (\ref{Tor1}) that
$$\beta_1\circ\ldots\circ\beta_k=L\pi_K^*R\pi_{K*}.$$
Thus every object from the LHS of \eqref{ajshrgafhka} is isomorphic to an object from the RHS, and vice versa.
\end{proof}

\begin{ex}
Let $X_S=(\bP^1)^S$ with projections as forgetful maps.
Conditions \eqref{fbzfbsdb}, \eqref{a,sbgas} and \eqref{asKJGBAMS} are clearly satisfied.
$X_S$ is the only massive subset. Applying Theorem~\ref{OrlovI} successively, it follows that 
 $$D^b_{cusp}(X_S)=\langle \cO(-1,-1,\ldots,-1)\rangle,$$
 i.e.~every object in $D^b_{cusp}(X_S)$ is isomorphic to $\cO(-1,-1,\ldots,-1)\otimes_k K$,
where $K$ is a complex of vector spaces. Moreover, the semi-orthogonal decomposition
of Theorem~\ref{prototype} is induced by a standard exceptional collection of $2^{|S|}$ line bundles 
$\cO(n_1,\ldots,n_{|S|})$, where $n_i=0$ or $-1$ for every~$i$.

Note that this collection is obviously equivariant under the action of $\Aut(X_S)$, which is 
the semidirect product of $S_n$ and $(\PGL_2)^n$ for $n=|S|$. Various moduli spaces
considered in this paper can be viewed as ``compactified quotients'' of this basic example modulo $\bG_m$ or $\PGL_2$.
\end{ex}

\bp[Proof of Prop.~\ref{semiortMN}]
Recall that we need to prove that $D^b(\bM_N)$ admits a semi-orthogonal decomposition
\begin{equation}\label{lkehhagB}
D^b(\bM_N)=\langle D^b_{cusp}(\bM_N), \ \{\pi_K^*D^b_{cusp}(\bM_{N\setminus K})\}_{K\subset N},\ \cO\rangle
\end{equation}
where $K$ runs over subsets with $1\le |K|\le n-4$
in the order of increasing cardinality $|K|$. 
We apply Theorem~\ref{prototype}.
All conditions \eqref{fbzfbsdb}, \eqref{a,sbgas} and \eqref{asKJGBAMS} are satisfied. 
Recall that a simple criterion for $\Tor$-independence for maps $X\ra S$ and $T\ra S$ is that 
one of them is flat. Hence, condition \eqref{a,sbgas} holds as the forgetful maps $\pi_i:\M_{0,n}\ra\M_{0,n-1}$ are flat. 
The condition \eqref{asKJGBAMS} holds as the map is birational and the image has rational
singularities \cite[pg. 548]{Keel}.
\ep

Similarly, we have:  
\bp[Proof of Prop.~\ref{semiort}]
Recall that we need to prove that $D^b(\LM_N)$ admits the semi-orthogonal decomposition
$$
D^b(\LM_N)=\langle D^b_{cusp}(\LM_N), \ \{\pi_K^*D^b_{cusp}(\LM_{N\setminus K})\}_{K\subset N},\ \cO\rangle
$$
where subsets $K$ with $1\le |K|\le n-2$ are ordered 
by  increasing cardinality. 
We apply Theorem~\ref{prototype} to the forgetful maps 
$$\pi_i:\LM_N\ra\LM_{N\setminus\{i\}},\quad i\in N.$$ 
All conditions \eqref{fbzfbsdb}, \eqref{a,sbgas} and \eqref{asKJGBAMS} are satisfied. 
Note again that the forgetful maps $\pi_i$ for $i\in N$ are flat (they give the universal family); hence, condition \eqref{a,sbgas} holds. 
The condition  \eqref{asKJGBAMS} holds because 
$\alpha_{ij}$ is birational and $Y$ has toroidal, and therefore rational, singularities.
\ep


\section{The exceptional collection $\hat{\Bbb G}$ on the Losev--Manin space}\label{LosevManin}

\begin{prop}
An irreducible subset $Z\subset\LM_N$ is massive
if and only if $Z$ is a boundary stratum of the form $Z_{N_1,\ldots, N_t}$ with $|N_i|\ge 2$ for $i=1,\ldots,t$.
\end{prop}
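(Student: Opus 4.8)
The plan is to reduce massiveness to the single-point forgetful maps and then treat the two implications separately, the converse being the substantive one. First I would note that, although the index set consists of all forgetful maps $\pi_K$ with $1\le|K|\le n-1$, it suffices to test the single-point maps $\pi_k:=\pi_{\{k\}}$: since $\pi_K$ factors through $\pi_k$ for any $k\in K$, one has $\dim\pi_K(Z)\le\dim\pi_k(Z)$, so $Z$ is massive if and only if $\dim\pi_k(Z)<\dim Z$ for every $k\in N$. For the forward implication, suppose $Z=Z_{N_1,\ldots,N_t}\simeq\LM_{N_1}\times\cdots\times\LM_{N_t}$ with every $|N_i|\ge2$. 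Any $k\in N$ lies in some part $N_j$, and $\pi_k$ acts only on the $j$-th factor, sending $Z$ onto $Z_{N_1,\ldots,N_j\setminus k,\ldots,N_t}\subset\LM_{N\setminus k}$; comparing $\dim Z=n-t$ with $(n-1)-t$ shows $\dim\pi_k(Z)=\dim Z-1$, so $Z$ is massive.

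For the converse, let $Z$ be massive and let $Z_{M_1,\ldots,M_s}$ be the unique boundary stratum whose open part contains the generic point of $Z$, so that $Z\subseteq Z_{M_1,\ldots,M_s}$. I would first show that no part $M_i$ is a singleton. If $|M_j|=1$, with unique marking $k\in M_j$, then forgetting $k$ contracts the $j$-th component; the factor $\LM_{M_j}$ is a point, so $\pi_k$ restricts to a generically injective map on $Z_{M_1,\ldots,M_s}$, hence on $Z$, giving $\dim\pi_k(Z)=\dim Z$ and contradicting massiveness. Therefore all $|M_i|\ge2$.

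It remains to prove that $Z$ is the whole stratum. I would pass to the open stratum, which is a single torus $T\simeq\GG_m^{n-s}$: coordinatizing each component so its two ends lie at $0$ and $\infty$, writing $x_m$ for the position of marking $m$, and fixing a base marking $m_i^0$ in each part, the ratios $t_m=x_m/x_{m_i^0}$ over the non-base markings $m$ furnish $\sum_i(|M_i|-1)=n-s$ coordinates, and for each non-base $k$ the forgetful map $\pi_k$ becomes exactly the coordinate projection forgetting $t_k$, whose fibers are the orbits of the one-parameter subgroup $G_k\subset T$ scaling $t_k$. Massiveness forces the generic fiber of $\pi_k|_Z$ to be one-dimensional, hence to fill out the whole $G_k$-orbit, so that the closed irreducible $Z$ is $G_k$-invariant. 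As $k$ ranges over all non-base markings the subgroups $G_k$ generate $T$, so $Z$ is $T$-invariant; meeting the dense orbit, it must equal $\overline{T}=Z_{M_1,\ldots,M_s}$, all of whose parts have size $\ge2$, as required.

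The main obstacle is this final step: converting the numerical inequality $\dim\pi_k(Z)<\dim Z$ into genuine $\GG_m$-invariance of $Z$ and then assembling the individual invariances into invariance under the full torus. The two points requiring care are the explicit identification of each single-point forgetful map with a coordinate projection in the chosen torus coordinates---note that using only the non-base markings already yields enough projections, so one never has to re-coordinatize after forgetting a base point---and the upgrade from ``generic fiber positive-dimensional'' to honest invariance of the closed irreducible subvariety $Z$.
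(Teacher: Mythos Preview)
Your argument is correct and follows essentially the same route as the paper: both reduce to the torus structure of the open part of a stratum and use that single-point forgetful maps become coordinate projections there, so a proper irreducible subvariety cannot have all such projections drop dimension. Your version is simply more explicit---you spell out the torus coordinates and the passage from ``positive-dimensional fibers'' to $\GG_m$-invariance via $G_k\cdot W=\pi_k^{-1}(\pi_k(W))$, whereas the paper states this step in one sentence---but the content is the same.
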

 
\bp
Let $Z$ be a boundary stratum. If $N_i=\{a\}$ for some $i$ then $\pi_a$ restricted to $Z$ is one-to-one.
Hence $Z$ is not a massive subset. On the other hand, if $|N_i|\ge 2$ for every $i$ then $Z$ is  a massive subset. 
It remains to show that if $Z$ is a proper irreducible subset of
a boundary stratum which intersects its interior then $Z$ is not massive. But the interior of any stratum is an algebraic torus $\bG_m^r$
and projections onto coordinate subtori are realizable as forgetful maps. Thus $Z$ can not be massive.
\ep

\begin{prop}
The rank of the $K$-group of $D^b(\LM_n)$ (resp.~$D^b_{cusp}(\LM_n)$) is equal to  $n!$
(resp.~$!n$). 
\end{prop}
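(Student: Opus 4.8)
The plan is to compute the two ranks by combining the semi-orthogonal decomposition of Proposition~\ref{semiort} with an inductive bootstrap. First I would observe that the rank of $K_0$ is additive in semi-orthogonal decompositions: if $\cT=\langle \cT_1,\ldots,\cT_m\rangle$ then $\mathrm{rk}\,K_0(\cT)=\sum_i \mathrm{rk}\,K_0(\cT_i)$, and that $L\pi_K^*$ is fully faithful so $\mathrm{rk}\,K_0(L\pi_K^*D^b_{cusp}(\LM_{N\setminus K}))=\mathrm{rk}\,K_0(D^b_{cusp}(\LM_{N\setminus K}))$. Writing $c(m)$ for the cuspidal rank of $\LM_M$ with $|M|=m$ and $b(m)$ for the rank of $K_0(D^b(\LM_M))$, the decomposition in Proposition~\ref{semiort} then yields the relation
\begin{equation}\label{myplanrec}
b(n)=c(n)+\sum_{\emptyset\neq K\subsetneq N} c(n-|K|)+1=\sum_{k=0}^{n}\binom{n}{k}c(n-k),
\end{equation}
where the trailing $\cO$ contributes the $k=n$ term with $c(0):=1$ (the case $K=N$, i.e.\ $\LM_\emptyset$ a point), and the first cuspidal block is the $k=0$ term. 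So the entire problem reduces to the binomial transform relating $b$ and $c$.

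Second, I would pin down $b(n)=n!$ directly, independently of the recursion. The cleanest route is to note that $\LM_N$ is a smooth projective toric variety of dimension $n-1$, so the rank of its $K$-group equals its topological Euler characteristic, which equals the number of torus-fixed points, equivalently the number of maximal cones of the fan. Since $\LM_N$ is the toric variety of the permutohedron (its fan is the braid/type $A$ fan, whose maximal cones are the Weyl chambers indexed by $S_n$), there are exactly $n!$ fixed points; hence $b(n)=n!$. Alternatively one can sum the cardinalities of the cells in the stratification by the $Z_{N_1,\ldots,N_t}$, but the fixed-point count is the shortest.

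Third, with $b(n)=n!$ known, I would solve \eqref{myplanrec} for $c(n)$. The binomial transform is inverted by
\begin{equation}\label{myplaninv}
c(n)=\sum_{k=0}^{n}(-1)^{k}\binom{n}{k}b(n-k)=\sum_{k=0}^{n}(-1)^{k}\binom{n}{k}(n-k)!,
\end{equation}
and a standard simplification gives $\sum_{k=0}^n (-1)^k \binom{n}{k}(n-k)! = n!\sum_{k=0}^n (-1)^k/k! = \,!n$, the number of derangements. This is exactly the classical inclusion--exclusion formula for derangements, so the identification $c(n)=\,!n$ is immediate once the recursion and $b(n)=n!$ are in hand.

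The main obstacle is not the algebra of the binomial transform, which is routine, but justifying the additivity of $K_0$-rank across the semi-orthogonal decomposition and, in particular, confirming $b(n)=n!$ cleanly. One must be careful that the blocks $L\pi_K^*D^b_{cusp}(\LM_{N\setminus K})$ really are admissible and fully faithfully embedded so their $K_0$-ranks add up without overcounting; this is guaranteed by Theorem~\ref{prototype} and Proposition~\ref{semiort}. The toric identification $b(n)=n!$ is the one genuinely geometric input, and I would anchor it on the permutohedral fan description of $\LM_N$; everything else follows formally.
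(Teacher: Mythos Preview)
Your proposal is correct and follows essentially the same path as the paper: compute $b(n)=n!$ via the torus fixed-point count on the permutohedral variety, extract the binomial-type recursion from Proposition~\ref{semiort}, and identify $c(n)$ with $!n$. The only cosmetic difference is that you invert the binomial transform explicitly, whereas the paper simply observes that $c(n)$ and $!n$ satisfy the same recursion \eqref{derangement} and hence agree by induction.
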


\bp
Since $\LM_N$ is a toric variety, its $K$-group is a free Abelian group and its topological Euler characteristic
(and thus the rank of its K-group) is equal to the number of torus fixed points,
which are clearly parametrized by permutations of~$N$.
The second part of the Proposition follows because both the rank of the K-group of $D^b_{cusp}(\LM_n)$
(by Prop.~\ref{semiort}) and $!n$ (by obvious reasons) satisfy the same recursion 
\begin{equation}\label{derangement}
n!=!n+\sum_{1\le k\le n-1}{n\choose k}!(n-k)+1
\end{equation}
Hence these numbers agree.
\ep

\bp[Proof of formula~\eqref{derangedcount}]
Recall that formula~\eqref{derangedcount} states that 
\begin{equation*}
\sum_{k_1+\ldots+k_t=n\atop k_1,\ldots,k_t\ge 2}\left(
{n\atop k_1\ \ldots\ k_t}\right)(k_1-1)\ldots(k_t-1)=!n,
\end{equation*}
where $\left({n\atop k_1\ \ldots\ k_t}\right)=\frac{n!}{k_1!\ldots k_t!}$.

We denote the left hand side by $d_n$ and set $d_0=1$, $d_1=0$. Let 
$$A=\sum_{n\ge 2}(n-1){x^n\over n!}=x^2\left({e^x-1\over x}\right)'=e^x(x-1)+1.$$
Then we have 
$$\sum_{m\ge 0}{d_m\over m!}x^m=1+A+A^2+A^3+\ldots={1\over 1-A}={e^{-x}\over 1-x}.$$
But \eqref{derangement} implies that 
$${1\over 1-x}=\left(\sum_{m\geq0} !m{x^m\over m!}\right)\left(\sum_{n\geq0} {x^n\over n!}\right),$$
(where we set $!0=0!=1$). Hence $d_n=!n$ and we are done.
\ep

\begin{lemma}\label{G}$\ $
\bi
\item[(1) ] Every $G_a$ is $S_{N}$-invariant and Cremona action takes it to $G_{n-a}$.
\item[(2) ] We have $G_1=\psi_0$ and $G_{n-1}=\psi_\inf$.
\item[(3) ] For every boundary divisor $\de=\de_{N_1}\simeq\LM_{N_1}\times\LM_{N_2}$, we have
$${G_a}_{|\de}=\begin{cases}
G_{a}\boxtimes \cO & \text{ if }\quad a<|N_1|\cr
\cO & \text{ if }\quad a=|N_1|\cr
\cO\boxtimes G_{a-|N_1|}& \text{ if }\quad a>|N_1|.\end{cases}$$
\ei
\end{lemma}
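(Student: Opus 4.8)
Parts (1)--(3) all reduce to a single linear relation in the Picard group of the Losev--Manin space. The $S_N$-invariance in (1) is immediate: $\psi_0$ is fixed by $S_N$ (which fixes the markings $0$ and $\infty$), and each symmetric sum $S_j := \sum_{|J|=j}\de_J$ is manifestly $S_N$-invariant, so every $G_a$ is invariant; likewise $G_1=\psi_0$ since its defining sum is empty. The plan is to deduce everything else from the \emph{master relation}
$$G_m := m\psi_0 - \sum_{j=1}^{m-1}(m-j)S_j = 0 \quad\text{in }\Pic(\LM_M),\quad |M|=m. \qquad (\ast)$$

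To prove $(\ast)$ I would work in the $\psi_0$-Kapranov model $q\colon \LM_M\to\bP^{m-1}$, in which $\psi_0=q^*H$ and $\de_J$ is the (strict transform of the) exceptional divisor over the coordinate span $\langle e_k:k\in J\rangle$. Writing $\mathrm{cr}$ for the Cremona involution $(x_i)\mapsto(1/x_i)$, I would compute its effect on a hyperplane by a multiplicity count: the Cremona image of $H$ has multiplicity $m-|J|-1$ along the span indexed by $J$, giving
$$\psi_\infty=\mathrm{cr}^*\psi_0=(m-1)\psi_0-\sum_{j=1}^{m-1}(m-j-1)S_j. \qquad (\mathrm{I})$$
Since $\mathrm{cr}$ is an involution with $\mathrm{cr}^*\psi_0=\psi_\infty$ and $\mathrm{cr}^*\de_J=\de_{M\setminus J}$ (hence $\mathrm{cr}^*S_j=S_{m-j}$), applying $\mathrm{cr}^*$ to (I) yields the symmetric identity $\psi_0=(m-1)\psi_\infty-\sum_j(j-1)S_j$; eliminating $\psi_\infty$ between the two gives $m(m-2)\psi_0=(m-2)\sum_j(m-j)S_j$, which is $(\ast)$ for $m\neq 2$ (the case $m=2$, where $\LM_M\simeq\bP^1$, is checked directly). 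Formula (I) also gives the remaining part of (2), namely $G_{m-1}=\psi_\infty$.

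For the Cremona statement in (1), I would compute $\mathrm{cr}^*G_a=a\psi_\infty-\sum_{j=1}^{a-1}(a-j)S_{n-j}$ directly from $\mathrm{cr}^*\psi_0=\psi_\infty$ and $\mathrm{cr}^*S_j=S_{n-j}$, substitute (I) to express $\psi_\infty$ through $\psi_0$ and the $S_j$, and use $(\ast)$ to absorb the resulting coefficient $n(a-1)$ of $\psi_0$. A short bookkeeping check then shows that the coefficient of each $S_k$ in $\mathrm{cr}^*G_a-G_{n-a}$ vanishes, so $\mathrm{cr}^*G_a=G_{n-a}$.

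The restriction formula (3) is where the geometry enters. Writing $\de=\de_{N_1}\simeq\LM_{N_1}\times\LM_{N_2}$, I would use three standard facts about the boundary of linear chains: $\psi_0|_\de=\psi_0^{(1)}\boxtimes\cO$; two boundary divisors $\de_J$ and $\de_{N_1}$ are disjoint unless $J$ and $N_1$ are nested, in which case $\de_J|_\de=\de_J^{(1)}\boxtimes\cO$ if $J\subsetneq N_1$ and $\de_J|_\de=\cO\boxtimes\de_{J\setminus N_1}^{(2)}$ if $N_1\subsetneq J$; and the self-intersection (normal bundle) formula $\de_{N_1}|_\de=-\psi_\infty^{(1)}\boxtimes\cO-\cO\boxtimes\psi_0^{(2)}$. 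Splitting the defining sum of $G_a$ according to whether $J\subsetneq N_1$, $J=N_1$, $N_1\subsetneq J$, or $J,N_1$ incomparable (the last contributing nothing), the three cases $a<|N_1|$, $a=|N_1|$, $a>|N_1|$ follow: the first is immediate, while in the other two the first-factor terms collapse to $\cO$ by the master relation $(\ast)$ on $\LM_{N_1}$ (using (I) on $\LM_{N_1}$ to rewrite $\psi_\infty^{(1)}$), and the surviving second-factor terms reassemble into exactly $G_{a-|N_1|}^{(2)}$. I expect the main obstacle to be establishing the pullback formula (I): correctly pinning down the identifications $\psi_0=q^*H$ and $\de_J\leftrightarrow\langle e_k\rangle_{k\in J}$ and tracking the multiplicities along the iterated blow-up of coordinate subspaces. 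Once (I), and hence $(\ast)$, are in hand, parts (1)--(3) reduce to the linear algebra and intersection bookkeeping sketched above.
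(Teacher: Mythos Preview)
Your proposal is correct and carries out precisely the kind of ``direct calculation'' the paper indicates but does not spell out: the identifications $\psi_0=H$, $\de_J=E_J$ in the $\psi_0$-Kapranov model, the standard restriction rules for $\psi_0$ and $\de_J$ to a boundary divisor (including the self-intersection formula $\de_{N_1}|_{\de_{N_1}}=-\psi_\infty^{(1)}-\psi_0^{(2)}$), and the resulting bookkeeping are all as you describe, and I verified your coefficient checks for $\mathrm{cr}^*G_a=G_{n-a}$ and for the three cases of (3). One minor remark: the paper later (in the proof of Prop.~\ref{BrionIdea}) obtains the relation equivalent to your master identity $(\ast)$ more directly, by pulling back a coordinate hyperplane and symmetrizing, rather than via your Cremona-involution trick; either route works.
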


\begin{proof} Direct calculation.
\end{proof}

\begin{notn}
For an object $F\in D^b(X)$, we often use notation $R\Gamma(F)$ instead of $R\Gamma(X,F)$
when the space $X$ is clear from the context. 
\end{notn}

\begin{lemma}\label{-G}\label{push forward zero} $\ $
\bi
\item[(1) ] Every $G_a$ is nef (and hence globally generated), of relative degree $1$ with respect to any forgetful map $\pi_i$, $i\in N$.
\item[(2) ] $G_a^\vee\in D^b_{cusp}(\LM_n)$. In particular, each $G_a^\vee$ is acyclic. 
\item[(3) ] $R\Hom(G_a,G_b)=0$ if $a\ne b$.
\item[(4) ] $R\Gamma(-\psi_0+G_a-G_b)=R\Gamma(-\psi_\inf+G_b-G_a)=0$ if $a<b$.
\ei
In particular, $\bG_N$ is an $S_2\times S_n$ equivariant exceptional (in fact pairwise orthogonal) collection in $D^b_{cusp}(\LM_N)$ 
of $n-1$ line bundles.
\end{lemma}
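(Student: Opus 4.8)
The plan is to establish the four numbered assertions about the line bundles $G_a$ directly, then read off the ``in particular'' claim as a formal consequence. Parts (1) and (2) are the geometric heart. For (1), I would use the description of $G_a$ as a nef divisor: since the $G_a$ are claimed (in the earlier remark) to restrict the generators of the nef cone of $\overline{\PGL_n}$, nefness should follow from an explicit intersection computation against the torus-invariant curves of the toric variety $\LM_N$, or alternatively from Lemma~\ref{G}(3), which expresses $G_a|_\de$ on each boundary divisor as a nef class on a product of smaller Losev--Manin spaces — this sets up an induction on $n$, with the base case $\LM_2 \simeq \bP^1$ being immediate. Global generation then follows from nefness on a toric variety (nef equals globally generated for toric varieties). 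The relative-degree-$1$ statement with respect to $\pi_i$ I would verify by restricting to a fiber of the forgetful map and computing the degree of $G_a$ there, using $G_1 = \psi_0$ from Lemma~\ref{G}(2) and the inductive structure.

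For part (2), the claim $G_a^\vee \in D^b_{cusp}(\LM_N)$ means ${R\pi_i}_* G_a^\vee = 0$ for all forgetful maps, and in particular (pushing to a point) that $G_a^\vee$ is acyclic. Here I would argue fiberwise: $\pi_i$ is a $\bP^1$-fibration (after suitable identification), and by (1) the relative degree of $G_a$ is $1$, so $G_a^\vee$ has relative degree $-1$ on each fiber; a line bundle of degree $-1$ on $\bP^1$ has vanishing cohomology, so by cohomology and base change ${R\pi_i}_* G_a^\vee = 0$. Global acyclicity follows by composing forgetful maps down to $\LM_2$ or directly to a point. This is the step I expect to be the main obstacle, because the forgetful maps $\pi_i$ on $\LM_N$ are not literally $\bP^1$-bundles — they contract some boundary strata — so the fiberwise degree argument must be supplemented by a careful analysis over the locus where fibers degenerate, most cleanly handled by the boundary restriction formula of Lemma~\ref{G}(3) feeding an induction.

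For parts (3) and (4), I would reduce to cohomology-of-line-bundle computations. For (3), $R\Hom(G_a, G_b) = R\Gamma(G_b - G_a) = R\Gamma(G_b \otimes G_a^\vee)$, and I expect $G_b - G_a$ (for $a \ne b$) to be a divisor whose cohomology vanishes, again provable by an inductive restriction-to-the-boundary argument together with the fibration structure, using that the relative degrees with respect to an appropriate $\pi_i$ differ so that the relative cohomology vanishes. For (4), the divisors $-\psi_0 + G_a - G_b$ and $-\psi_\inf + G_b - G_a$ (with $a < b$) should likewise have no cohomology; here the Cremona symmetry $G_a \leftrightarrow G_{n-a}$ and $\psi_0 \leftrightarrow \psi_\inf$ from Lemma~\ref{G}(1) reduces the second vanishing to the first. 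I would prove the first vanishing by finding a forgetful map along which the relative degree of $-\psi_0 + G_a - G_b$ is negative (using $G_1 = \psi_0$ and the relative-degree-$1$ property), forcing relative acyclicity and hence total acyclicity via the Leray spectral sequence.

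Finally, the concluding assertion is formal. Orthogonality $R\Hom(G_a^\vee, G_b^\vee) = R\Hom(G_b, G_a) = 0$ for $a \ne b$ is exactly (3) (with the roles of $a,b$ swapped), and the self-$\Hom$ spaces are $R\Hom(G_a^\vee, G_a^\vee) = R\Gamma(\cO_{\LM_N}) = k$ since $\LM_N$ is a smooth projective rational (toric) variety with $H^{>0}(\cO) = 0$; this is the exceptionality of each object and, combined with the vanishing, the pairwise orthogonality of the collection. Membership in $D^b_{cusp}$ is (2). The $S_2 \times S_N$-equivariance follows from Lemma~\ref{G}(1): the collection $\bG_N = \{G_1^\vee, \ldots, G_{n-1}^\vee\}$ is permuted by the symmetric group action (each $G_a$ is $S_N$-invariant) and sent to itself by the Cremona $S_2$ (which swaps $G_a^\vee \leftrightarrow G_{n-a}^\vee$), so the set is preserved as a whole. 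I would close by noting that the count of $n-1$ line bundles is immediate from the indexing $a = 1, \ldots, n-1$.
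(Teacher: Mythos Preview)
Your approach to (1) and (2) matches the paper's: nefness via induction on dimension using Lemma~\ref{G}(3) and the toric criterion, and cuspidality via relative degree $-1$ plus cohomology and base change. Your worry about degenerate fibers of $\pi_i$ is reasonable but not a serious obstacle here.

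There is, however, a genuine gap in your plan for (3). You propose to prove $R\Gamma(G_b - G_a) = 0$ by finding a forgetful map $\pi_i$ along which ``the relative degrees \ldots\ differ so that the relative cohomology vanishes.'' But by part (1), \emph{every} $G_a$ has relative degree $1$ with respect to \emph{every} $\pi_i$, so $G_b - G_a$ has relative degree $0$ along every forgetful map. There is no $\pi_i$ giving relative degree $-1$, and the fiberwise-vanishing mechanism simply does not apply. A boundary-restriction induction could conceivably be made to work, but it would be substantially more involved than what you sketch, and your stated reason for why it works is incorrect.

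The paper takes a different route for (3) and (4). After using the Cremona symmetry to assume $a>b$, it writes $-G_a + G_b$ (and likewise $-\psi_0 + G_a - G_b$) explicitly in the $\psi_0$-Kapranov model as $-dH + \sum m_I E_I$ and invokes Lemma~\ref{easy acyclic}: a divisor of this shape is acyclic whenever $d$ and the $m_I$ lie in a simple numerical range, because one can peel off the exceptional divisors one by one (each restriction being acyclic) until one is left with $\cO(-d)$ on projective space. This is a one-line verification once the coefficients are written out, and it bypasses the relative-degree obstruction entirely. Your approach to (4) via relative degree $-1$ is more plausible (since $-\psi_0 = -G_1$ contributes the needed $-1$), but the paper handles (3) and (4) uniformly by the Kapranov-model criterion rather than by fibration arguments.
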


\bp
Since $\LM_n$ is a toric variety, (1) will follow if $G_a$ is non-negative on toric boundary curves. 
This follows from Lemma~\ref{G}~(3) by induction on dimension. 
Since restriction of $G_a^\vee$ to each fiber of $\pi_i$ has vanishing cohomology, (2) follows by cohomology and base change.
Since $R\Hom(G_a,G_b)=R\Gamma(-G_a+G_b)$ and we can assume $a>b$ (by applying Cremona action), 
both (3) and (4) follow from Lemma \ref{easy acyclic}.
\ep


\begin{lemma}\label{easy acyclic}
Consider the divisor $$D=-dH+\sum m_IE_I$$
on $\M_{0,n}$ or $\LM_N$
written in some Kapranov model. The divisor $D$ is acyclic if 
$$1\leq d\leq n-3, \quad 0\leq m_I\leq n-3-\#I.$$
\end{lemma}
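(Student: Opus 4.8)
The plan is to reduce the computation to a vanishing statement on projective space by pushing the line bundle $\cO(D)$ forward through the blow-ups of the Kapranov model one at a time. Recall that in this model the space ($\M_{0,n}$ or $\LM_N$) is realized as an iterated blow-up $q\colon \tilde X\to\bP^{m}$ (with $m=n-3$, resp.~$m=n-1$) of the proper transforms of the linear spans $\langle I\rangle$ of the distinguished points indexed by subsets $I$, the centers being blown up in order of non-decreasing dimension; here $H=q^*\cO(1)$ and $E_I$ is the (strict transform of the) exceptional divisor over $\langle I\rangle$, whose center has codimension $c_I=m-\#I+1$. The one fact I would invoke repeatedly is the standard blow-up vanishing: if $p\colon\tilde Y\to Y$ is the blow-up of a smooth center of codimension $c$ with exceptional divisor $E$, then $Rp_*\cO_{\tilde Y}(kE)=\cO_Y$ for $0\le k\le c-1$. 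This is immediate from $\cO_{\tilde Y}(kE)|_E=\cO_E(-k)$, the acyclicity of $\cO(-k)$ on the $\bP^{c-1}$-fibres of $E\to$ center, and $p_*\cO(kE)=\cO_Y$.

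I would then argue by induction on the number of blow-ups in the model. Let $p\colon\tilde X\to X'$ be the last blow-up, with center $Z$ the proper transform of $\langle J\rangle$ and exceptional divisor $E_J$. The key point — and the one needing care — is that every other exceptional class pulls back cleanly, i.e. $E_I=p^*E_I'$ for $I\ne J$ and $H=p^*H'$. Indeed, $p^*E_I'$ differs from the strict transform $E_I$ exactly by $\mathrm{mult}_Z(E_I')\,E_J$, and this multiplicity is positive only if $Z\subset E_I'$, i.e. only if $\langle J\rangle\subseteq\langle I\rangle$, i.e. $J\subseteq I$ for the linearly general points at hand; but $Z$ is blown up last, so every earlier center has dimension $\le\dim Z$, forcing $\#I\le\#J$, whence $J\subseteq I$ is impossible for $I\ne J$. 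Hence $D=p^*D'+m_J E_J$ with $D'=-dH'+\sum_{I\ne J}m_I E_I'$ on $X'$. Since $Z$ has codimension $c_J=m-\#J+1$ and the hypothesis gives $0\le m_J\le n-3-\#J\le c_J-1$, the blow-up vanishing together with the projection formula yields $Rp_*\cO(D)=\cO(D')$, so that $H^i(\tilde X,D)=H^i(X',D')$ for all $i$. As $D'$ is again of the prescribed form with the same bounds on $X'$ (an iterated blow-up of the same type with one fewer center), the inductive hypothesis applies.

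The base case is the divisor $-dH$ on $\bP^{m}$, which is acyclic precisely because $1\le d\le n-3\le m$ keeps $d$ strictly between the $H^0$-range ($d\le 0$) and the $H^{m}$-range ($d\ge m+1$). For $\M_{0,n}$ one has $m=n-3$ and $c_I-1=n-3-\#I$, so the stated bound on $m_I$ is exactly the sharp one consumed at each step; for $\LM_N$ one has $m=n-1$ and $c_I-1=n-1-\#I$, so the bound $m_I\le n-3-\#I$ sits comfortably inside the admissible range $[0,c_I-1]$ and the same descent goes through verbatim.

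I expect the only genuinely delicate step to be the clean-pullback verification $E_I=p^*E_I'$, since it is exactly what licenses rewriting $D$ as $p^*D'+m_J E_J$ and thereby drives the entire inductive descent; it relies on the non-decreasing ordering of the centers and on the linear generality of the blown-up points. Everything else — the pushforward formula $Rp_*\cO(kE)=\cO_Y$ in the range $0\le k\le c-1$, the projection formula, and the elementary acyclicity of $\cO(-d)$ on $\bP^{m}$ — is routine.
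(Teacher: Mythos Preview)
Your proof is correct. The paper's own argument is a one-line sketch: ``By consecutively restricting to boundary divisors $E_I$ starting with those with the largest $\#I$ and continuing to those with smaller $\#I$, note that all the restrictions are acyclic, hence $D$ has the same cohomology as $-dH$.'' That is, the paper stays on $\tilde X$ and uses the exact sequences $0\to\cO(D-E_I)\to\cO(D)\to\cO_{E_I}(D)\to 0$ to peel the coefficients $m_I$ down to zero, whereas you push forward through the blow-up tower and invoke $Rp_*\cO(kE)=\cO$ (which is exactly the paper's Lemma~\ref{push exceptional}) together with the projection formula.

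The two arguments are really the same computation seen from opposite ends: both rest on the acyclicity of $\cO(-k)$ on the $\bP^{c_I-1}$ fibers of $E_I$ for $1\le k\le c_I-1$. What your version buys is that it makes explicit the one point the paper glosses over, namely why the bookkeeping works --- your clean-pullback check $E_I=p^*E_I'$ for $I\ne J$ (using the non-decreasing ordering of centers) is precisely what licenses writing $D=p^*D'+m_JE_J$ and hence isolates the single exceptional divisor at each step. The paper's phrasing, by contrast, requires the reader to verify that the full restriction $\cO_{E_I}(D)$ is acyclic, which implicitly needs the same transversality but leaves it unsaid. Your inductive framing is the more transparent of the two.
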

\bp
By consecutively restricting to boundary divisors $E_I$ starting with those with the largest $\#I$ and continuing to those with smaller $\#I$, note that all the restrictions are acyclic, hence $D$ has the same cohomology as $-dH$. Clearly,  $-dH$ is acyclic if and only if 
$1\leq d\leq n-3$. 
\ep

\begin{lemma}
$\hat\bG$ is a collection of $!n$  sheaves in $D^b_{cusp}(\LM_N)$.
\end{lemma}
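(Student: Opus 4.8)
The plan is to verify two independent claims: that each element of $\hat\bG$ is a cuspidal object, and that the total count of objects in $\hat\bG$ equals $!n$. First I would address membership in $D^b_{cusp}(\LM_N)$. The generic object is $\cT=(i_Z)_*\cL$ for a stratum $Z=Z_{N_1,\ldots,N_t}\simeq\LM_{N_1}\times\cdots\times\LM_{N_t}$ with each $|N_i|\ge2$, where $\cL=G_{a_1}^\vee\boxtimes\cdots\boxtimes G_{a_t}^\vee$ with $1\le a_i\le|N_i|-1$. I must check $R{\pi_K}_*\cT=0$ for every nonempty $K\subsetneq N$. Since each stratum is massive (by the preceding Proposition, as all $|N_i|\ge2$), for any single point $k\in N$ the forgetful map $\pi_k$ restricted to $Z$ lands in a space of strictly smaller dimension, and it suffices to treat the single-point maps $\pi_k$ because a general $\pi_K$ factors through them. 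Here $k$ lies in some $N_i$; forgetting $k$ acts only on the $i$-th factor $\LM_{N_i}$, so by the Künneth/box structure the pushforward $R{\pi_k}_*\cT$ reduces to $R{\pi_k}_*(G_{a_i}^\vee)$ on that factor (tensored with the untouched factors). By Lemma~\ref{-G}(2), $G_{a_i}^\vee\in D^b_{cusp}(\LM_{N_i})$, so its pushforward under the forgetful map forgetting $k$ vanishes, giving $R{\pi_k}_*\cT=0$.

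The one subtlety in the membership argument is the commutation of $(i_Z)_*$ with the relevant base change, i.e.\ identifying $R{\pi_k}_*(i_Z)_*\cL$ with the pushforward computed factor-by-factor on $Z$. I would handle this by the $\Tor$-independence hypotheses \eqref{a,sbgas}--\eqref{asKJGBAMS} already established for $\LM_N$ in the proof of Prop.~\ref{semiort}, which give flat base change so that pushing forward along $\pi_k$ commutes with the closed embedding of the stratum. Concretely, the image stratum under $\pi_k$ is $Z_{N_1,\ldots,N_i\setminus\{k\},\ldots,N_t}$ (still a valid stratum when $|N_i|\ge2$), and the pushforward on the $i$-th factor is exactly the forgetful pushforward of $G_{a_i}^\vee$ on $\LM_{N_i}$, which is zero.

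For the counting statement, I would simply enumerate. A stratum $Z_{N_1,\ldots,N_t}$ with all $|N_i|\ge2$ is determined by an ordered partition of the $n$-element set $N$ into blocks of sizes $k_1,\ldots,k_t\ge2$ (ordered because the chain of $\bP^1$'s has a left-to-right orientation fixed by the positions of $0$ and $\infty$), and the number of ways to distribute $N$ into labeled blocks of those sizes is the multinomial $\binom{n}{k_1\ \cdots\ k_t}$. For each such stratum the box-product line bundles $\cL=G_{a_1}^\vee\boxtimes\cdots\boxtimes G_{a_t}^\vee$ are indexed by choices $1\le a_i\le k_i-1$, giving $(k_1-1)\cdots(k_t-1)$ sheaves. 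Summing over all compositions $k_1+\cdots+k_t=n$ with each $k_i\ge2$ yields exactly the left-hand side of \eqref{derangedcount}, which equals $!n$ by the already-proved formula. Since distinct strata have distinct supports and distinct $(a_i)$ give distinct bundles on a fixed stratum, these $\hat\bG$-objects are pairwise non-isomorphic, so the count is genuine.

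The main obstacle I anticipate is the base-change/Künneth bookkeeping in the first paragraph: one must be careful that $\pi_k$ sends the stratum $Z$ onto the correct smaller stratum and that, under the identification $Z\simeq\LM_{N_1}\times\cdots\times\LM_{N_t}$, the forgetful map genuinely acts as the product of the identity on all factors except the single factor $\LM_{N_i}$ containing $k$. Once this functorial compatibility is in place, the vanishing is immediate from Lemma~\ref{-G}(2), and the counting is purely combinatorial and already reduces to formula~\eqref{derangedcount}.
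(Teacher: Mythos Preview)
Your proposal is correct and matches the paper's approach: the paper's one-line proof simply cites formula~\eqref{derangedcount} for the count and Lemma~\ref{-G}(2) for cuspidality, and you have faithfully unpacked both ingredients.

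One small correction in your justification: the ``subtlety'' you flag is not really a base-change issue and does not require the $\Tor$-independence conditions \eqref{a,sbgas}--\eqref{asKJGBAMS}. You only need composition of pushforwards: $R{\pi_k}_*\circ R(i_Z)_* = R(\pi_k\circ i_Z)_* = R(i_{Z'})_*\circ R(\pi_k|_Z)_*$, where $Z'=\pi_k(Z)$. The map $\pi_k|_Z$ is the product of the identity on all factors except the forgetful map on the $i$-th factor $\LM_{N_i}$, and the K\"unneth formula for this flat product-projection gives the factorwise description; then Lemma~\ref{-G}(2) kills the $i$-th factor. Also note that by the definition of $D^b_{cusp}$ used in Theorem~\ref{prototype} (namely $\bigcap_i A_i$), one only needs to check the single-point forgetful maps $\pi_k$, so your reduction step is automatic.
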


\bp
Follows from Lemma~\ref{derangedcount} and Lemma~\ref{-G} (2).
\ep

It is worth mentioning that if $i:\,Z\hookrightarrow X$ is a closed embedding
of smooth projective varieties and $Z\ne X$ then 
the functor $Ri_*:\,D^b(Z)\to D^b(X)$ is in general not fully faithful.
Therefore, even though all sheaves in $\hat\bG$ are clearly exceptional 
in the derived category of their respective  support (being line bundles on a rational variety), we still have to prove:

\begin{lemma}\label{exceptional}
All sheaves in $\hat\bG$ are exceptional.
\end{lemma}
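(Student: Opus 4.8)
The plan is to fix $\cT=(i_Z)_*\cL$ with $Z=Z_{N_1,\ldots,N_t}\simeq\LM_{N_1}\times\cdots\times\LM_{N_t}$ and $\cL=G_{a_1}^\vee\boxtimes\cdots\boxtimes G_{a_t}^\vee$, and to compute $R\Hom_{\LM_N}(\cT,\cT)$ directly. When $t=1$ the sheaf $\cT$ is a line bundle in $\bG_N$ and exceptionality is already contained in Lemma~\ref{-G}, so I assume $t\ge2$. By the adjunction $Li_Z^*\dashv R(i_Z)_*$ we have $R\Hom_{\LM_N}(\cT,\cT)=R\Hom_Z(Li_Z^* R(i_Z)_*\cL,\cL)$. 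Since $Z$ is the transversal intersection of the smooth boundary divisors $D_s:=\de_{N_1\cup\cdots\cup N_s}$ ($1\le s\le t-1$), the embedding $i_Z$ is regular with normal bundle $N:=N_{Z/\LM_N}$ of rank $t-1$, and the standard computation of the derived restriction via the Koszul resolution of $\cO_Z$ gives $\cH^{-j}(Li_Z^*R(i_Z)_*\cL)\simeq\cL\otimes\Lambda^jN^\vee$. Feeding this into the hypercohomology spectral sequence, whose $E_2$-page reads $E_2^{p,q}=\Ext^p_Z(\cL\otimes\Lambda^qN^\vee,\cL)=H^p(Z,\Lambda^qN)$, I am reduced to the two vanishings $H^{>0}(Z,\cO_Z)=0$ and $H^i(Z,\Lambda^jN)=0$ for all $i$ and all $j\ge1$: the first controls the $q=0$ column, the second kills every $q\ge1$ column, so the abutment is $\CC$ concentrated in degree $0$, i.e. $\cT$ is exceptional. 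The first vanishing is immediate since $Z$ is a product of smooth projective toric varieties, and note that the twist $\cL$ has disappeared, which is why the $a_i$ play no role.

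The next step is to identify $N$. Each $D_s$ separates the chain and carries the node joining the $s$-th and $(s+1)$-th components; restricted to $Z$, the self-intersection formula for a boundary divisor reads $\cO(D_s)|_Z=(-\psi_\infty)\boxtimes(-\psi_0)$, placed in the $s$-th and $(s+1)$-th factors and trivial elsewhere, because $\psi_\infty$ (resp.\ $\psi_0$) restricts to the psi-class on the component $\LM_{N_s}$ (resp.\ $\LM_{N_{s+1}}$) carrying the corresponding branch of the node. As the $D_s$ meet transversally, $N=\bigoplus_{s=1}^{t-1}N_s$ with each $N_s=\cO(D_s)|_Z$ a line bundle, and by Lemma~\ref{G}(2) the contributions are $-\psi_\infty=G_{|N_s|-1}^\vee$ on factor $s$ and $-\psi_0=G_1^\vee$ on factor $s+1$ (both legitimate members of $\bG_{N_s}$ and $\bG_{N_{s+1}}$ since every $|N_i|\ge2$).

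Finally, expanding $\Lambda^jN=\bigoplus_{|S|=j}\bigotimes_{s\in S}N_s$ over $j$-subsets $S\subseteq\{1,\ldots,t-1\}$, each summand is an external product of line bundles, so Künneth gives $H^\bullet(Z,\bigotimes_{s\in S}N_s)=\bigotimes_p H^\bullet(\LM_{N_p},M_p)$, where on factor $p$ the line bundle $M_p$ collects a $-\psi_\infty$ when $p\in S$ and a $-\psi_0$ when $p-1\in S$. For any nonempty $S$ set $s_0=\min S$: then $s_0\in S$ while $s_0-1\notin S$, so $M_{s_0}=-\psi_\infty=G_{|N_{s_0}|-1}^\vee$, which is acyclic by Lemma~\ref{-G}(2). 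Hence every summand of $\Lambda^jN$ with $j\ge1$ has a vanishing Künneth factor and is acyclic, completing the argument. The conceptual point---and the only real subtlety, given that $R(i_Z)_*$ is far from fully faithful---is that the ``error terms'' $\Lambda^jN^\vee$ produced by the derived self-intersection are built entirely out of the acyclic bundles $\psi_0^\vee$ and $\psi_\infty^\vee$; the minimal-element observation is exactly what guarantees that each wedge power, and not merely $N$ itself, stays acyclic.
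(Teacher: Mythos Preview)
Your proof is correct and follows essentially the same route as the paper. The paper first twists away $\cL$ (observing it extends to a line bundle on $\LM_N$, so $R\Hom(\cT,\cT)=R\Hom(i_*\cO_Z,i_*\cO_Z)$) and then applies the Koszul resolution of $\cO_Z$ to arrive at the identical vanishing $R\Gamma\bigl(\cO_Z(\sum_{s\in S}D_s)\bigr)=0$ for nonempty $S$; your spectral-sequence packaging via $\Lambda^jN$ and the explicit minimal-element argument spell out more carefully what the paper compresses into ``we conclude that this is indeed the case.''
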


\bp
All sheaves in $\hat\bG$ are of the form $i_*i^*\cL=Ri_*Li^*\cL$, where $\cL$ is an invertible sheaf on $\LM_N$
and $i$ is an embedding of some massive stratum~$Z$.
We have
$$R\Hom(Ri_*Li^*\cL,Ri_*Li^*\cL)=R\Hom(\cL\mathop{\otimes}^L Ri_*\cO_Z,\cL\mathop{\otimes}^L Ri_*\cO_Z)=$$
$$=R\Hom(Ri_*\cO_Z,Ri_*\cO_Z).$$
So it suffices to prove that $Ri_*\cO_Z=i_*\cO_Z$ is an exceptional object.
Let  $Z$ be the intersection of boundary divisors $D_1,\ldots,D_s$.
Resolving $i_*\cO_Z$ by the Koszul complex
$$\ldots \ra\oplus_{1\leq i<j\leq s}\cO(-D_i-D_j)\ra  \oplus_{1\leq i\leq s}\cO(-D_i)\ra\cO\ra i_*\cO_Z\ra0,$$
we see that it suffices to prove that 
$$R\Gamma(\cO_Z(D_{i_1}+\ldots+D_{i_k}))=0$$
for every $1\le i_1<\ldots<i_k\le s$.
Using that $\cO_Z(D_i)$ has the form
$$
\cO\boxtimes\ldots\boxtimes\cO\boxtimes\cO(-\psi_\infty)\boxtimes\cO(-\psi_0)\boxtimes\cO\ldots\boxtimes\cO,
$$
we conclude that this is indeed the case.
\ep

\begin{lemma}\label{order}
$\hat\bG$ is an exceptional collection with respect to the following  order.
Let $\cT, \cT'\in \hat\bG$. Let $(k_1,\ldots,k_t;a_1,\ldots,a_t)$ and $(k'_1,\ldots,k'_s;a'_1,\ldots,a'_s)$ be the corresponding data.
Then $\cT>\cT'$ if the sequence $(a_1,-k_1,a_2,-k_2,,\ldots)$ is lexicographically (=alphabetically) larger than 
$(a_1',-k'_1,a'_2,-k'_2,\ldots)$.
\end{lemma}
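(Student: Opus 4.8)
The plan is to compute, for two objects $\cT=(i_Z)_*\cL$ and $\cT'=(i_{Z'})_*\cL'$ of $\hat\bG$ with supporting strata $Z=Z_{N_1,\ldots,N_t}$ and $Z'=Z_{N'_1,\ldots,N'_s}$, the graded space $R\Hom(\cT',\cT)$, and to show it vanishes whenever $\cT>\cT'$; this is the semi-orthogonality required by the stated total order, while exceptionality of each object is Lemma~\ref{exceptional}. First I would dispose of the case in which $Z$ and $Z'$ are disjoint: then the two sheaves have disjoint support and $R\Hom$ vanishes in both directions, so such a pair may be placed in any order. In particular, two objects carrying the same label $(a_1,-k_1,a_2,-k_2,\ldots)$ correspond to distinct ordered set partitions of the same shape, which (as one checks) admit no common refinement, so their strata are disjoint and the objects are orthogonal. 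Thus it suffices to treat intersecting strata, whose intersection $W=Z\cap Z'$ is the single boundary stratum $Z_R\simeq\prod_m\LM_{M_m}$ attached to the common refinement $R$ of the two ordered partitions.

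For the computation itself I would use adjunction, $R\Hom_X((i_{Z'})_*\cL',(i_Z)_*\cL)=R\Hom_{Z}(Li_{Z}^*(i_{Z'})_*\cL',\cL)$, and evaluate the derived restriction by the clean intersection formula, since toric orbit closures meet cleanly: $Li_{Z}^*(i_{Z'})_*\cL'=(i_{W\subset Z})_*\bigl(\cL'|_W\otimes\wedge^\bullet\cE^\vee\bigr)$ for the excess conormal bundle $\cE^\vee$ along $W$. The crucial simplification is that the restricted Koszul differentials vanish, because each defining section of a boundary divisor through $Z'$ restricts to zero on $W$; hence this complex splits as $\bigoplus_p\wedge^p\cE^\vee[p]$ and no cancellation is required. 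A further application of adjunction and the projection formula then brings everything onto $W$ and reduces the whole computation to a direct sum, over subsets $S$ of the relevant boundary divisors, of cohomology groups $R\Gamma\bigl(W,\ \cL^\vee|_W\otimes\cL'|_W\otimes\cO_W(\textstyle\sum_{j\in S}D_j)\bigr)$.

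Next I would factor this over the blocks of $W$. By Lemma~\ref{G}(3) each $G_{a_i}$ restricts to an external product of $G$'s and $\cO$'s on the refined blocks, and by the computation in Lemma~\ref{exceptional} each $\cO_W(D_j)$ is an external product with factors $\cO$, $\cO(-\psi_0)$ and $\cO(-\psi_\infty)$; hence the sheaf above is an external product, and by K\"unneth its cohomology is the tensor product over the blocks $M_m$ of groups $R\Gamma\bigl(\LM_{M_m},\ (G_b-G_c)+\gamma_m(S)\bigr)$ with $\gamma_m(S)\in\{0,-\psi_0,-\psi_\infty,-\psi_0-\psi_\infty\}$. Each such factor is governed by Lemmas~\ref{-G}(3),(4) and \ref{easy acyclic}: the relevant twists of the $G$'s and of $\pm\psi$ are acyclic in the indicated ranges. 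Consequently a given term (for fixed $S$) vanishes as soon as one of its blocks is acyclic, and the full $R\Hom$ vanishes precisely when every term admits such a block.

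The combinatorial heart of the proof — and the step I expect to be the main obstacle — is to show that the chosen order does produce a vanishing block for every subset $S$ simultaneously, in the stated direction. The difficulty is that the vanishing block may depend on $S$: one cannot expect a single ``deciding'' block to kill all terms at once, because the $-\psi_0$ and $-\psi_\infty$ corrections pull in opposite directions through Lemma~\ref{-G}(4). The plan is to read off, from the first position at which $(a_1,-k_1,a_2,-k_2,\ldots)$ and $(a'_1,-k'_1,\ldots)$ differ, a block of $W$ whose term $(G_b-G_c)+\gamma_m(S)$ is acyclic for the given $S$; the asymmetry built into the ordering — comparing $a_i$ before $-k_i$, and using $-k_i$ rather than $k_i$ — is exactly what guarantees that, for $\cT>\cT'$, such a block exists for each $S$, whereas for equal labels one recovers the pairwise orthogonality of $\bG_N$ from Lemma~\ref{-G}. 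Organizing this case analysis over the refinement patterns of the two partitions, and verifying its compatibility with the $S_2\times S_N$-symmetry, is where the real work lies.
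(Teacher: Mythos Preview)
Your overall architecture is correct and matches the paper's: reduce $R\Hom$ to a computation on the intersection stratum $W=Z\cap Z'$, factor as a K\"unneth product over the blocks of $W$, and kill each summand by finding one acyclic factor using Lemma~\ref{-G}. The excess/clean-intersection route you take is equivalent to the paper's ``Koszul on the common divisors $D_1,\ldots,D_s$ plus Tor-independent base change inside the minimal common stratum $U$''; both unpack the same derived restriction.

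Two points need fixing. First, the direction: for the stated order you must show $R\Hom(\cT,\cT')=0$ when $\cT>\cT'$, not $R\Hom(\cT',\cT)$. Your displayed integrand $\cL^\vee|_W\otimes\cL'|_W$ is actually the right one for $R\Hom(\cT,\cT')$, so the slip is only in the opening sentence, but it matters: the Grothendieck-duality step produces an \emph{asymmetric} normal-bundle correction, the first Chern class $N$ of $\cN_{W/Z'}$, i.e.\ the sum of boundary divisors that cut out $Z$ but do not contain $Z'$. Separate this fixed term $N$ from the variable Koszul term $D=\sum_{j\in S}D_j$ (where the $D_j$ are the boundary divisors containing \emph{both} $Z$ and $Z'$); your $\gamma_m(S)$ conflates them, and the asymmetry of $N$ is exactly what makes Lemma~\ref{-G}(4) bite in the correct direction.

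Second, the combinatorics is much cleaner than you anticipate. You do not need to search over all refinement patterns for a vanishing block that depends on $S$. The paper's argument is: look only at the first block $\LM_{K_1}$ of $W$ (so $|K_1|=\min(k_1,k'_1)$) and compute $C_1$. If $a_1>a'_1$, or $a_1=a'_1$ and $k_1<k'_1$, then the $N$-contribution forces $C_1=0$ via Lemma~\ref{-G}(3),(4), \emph{independently of} $D$. The only case where $C_1$ fails is $a_1=a'_1$, $k_1=k'_1$ (hence $N_1=N'_1$) and $D$ omits $D_{K_1}$; then $C_1=\CC$, but now the remaining factors are a product of Losev--Manin blocks with strictly shorter data, and you recurse. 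So the ``dependence on $S$'' you worry about collapses to a single dichotomy (does $D$ contain $D_{K_1}$ or not), handled by induction on the length of the data.
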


\bp
Let $Z$ and $Z'$ be massive strata supporting sheaves $\cT>\cT'$ in $\hat\bG$. 
These sheaves have the form
${Ri_Z}_*\cL$ and 
${Ri_{Z'}}_*\cL'$, respectively.
We~have to show that $R\Hom(\cT,\cT')=0$. 
Let $U$ be the smallest stratum containing both $Z$ and $Z'$.
Then $U$ is the intersection of boundary divisors $D_1,\ldots,D_s$ (these divisors
are precisely the divisors containing both $Z$ and $Z'$). We have
$$R\Hom({Ri_Z}_*\cL,{Ri_{Z'}}_*\cL')=R\Hom({Lj_{Z'}}^*{Rj_Z}_*\cL,\cL').$$
By \cite[Cor. 11.4(i)]{Huy}, 
it suffices to prove that 
$$R\Hom({Rj_Z}_*\cL,{Rj_{Z'}}_*\cL'(D))=0$$
for every $D=D_{i_1}+\ldots+D_{i_k}$ with $1\le i_1<\ldots<i_k\le s$,
where $j_Z$ (resp.~$j_{Z'}$) denotes the embedding of $Z$ (resp., ~$Z'$) into $U$.
Let $W=Z\cap Z'$. We can assume that $W$ is non-empty as otherwise there is nothing to prove.
Let $i_{W,Z}:\,W\hookrightarrow Z$ and $i_{W,Z'}:\,W\hookrightarrow Z'$
be the inclusions. We note that $Z$ and $Z'$ intersect transversally along $W$ in $U$, and therefore $j_Z$ and $j_{Z'}$ are $\Tor$-independent.
Next we apply cohomology and base change:
$$R\Hom({Rj_Z}_*\cL,{Rj_{Z'}}_*\cL'(D))=
R\Hom({Lj_{Z'}}^*{Rj_Z}_*\cL,\cL'(D))=$$
$$
R\Hom({Ri_{W,Z'}}_*Li_{W,Z}^*\cL,\cL'(D))=
R\Hom(Li_{W,Z}^*\cL,Li_{W,Z'}^!\cL'(D)),
$$
where for some morphism $f:X\ra Y$, we denote $Lf^!(-)$ the adjoint functor to $Rf_*(-)$. By Grothendieck duality, we have for $E\in D^b(Y)$ that 
$Lf^!(E)=Lf^*(E)\otimes\om_f[\dim(f)]$. Here, $\om_f=\omega_X\otimes f^*\omega_Y^*$, $\dim(f)=\dim(X)-\dim(Y)$. 
So it suffices to prove that 
$$R\Hom(Li_{W,Z}^*\cL,Li_{W,Z'}^*\cL'\otimes(D+c_1(\cN)))=0,$$
where~$c_1(\cN)$ is the first Chern class of the normal bundle $\cN:=\cN_{W,Z'}$, i.e., 
the sum of all boundary divisors that cut out $W$ inside $Z'$, or alternatively, 
the sum of boundary divisors that cut out $Z$ but don't contain $Z'$.

Now we proceed case by case. We write $$W=\LM_{K_1}\times\LM_{K_2}\times\ldots,$$
$$R\Hom({Li_{W,Z}}^*\cL, {Li_{W,Z'}}^*\cL'(D+N))=C_1\boxtimes C_2\boxtimes \ldots,$$
where $C_1$ is computed on $\LM_{K_1}$, etc. Note that if  $N=N_1\sqcup\ldots\sqcup N_t$, resp. $N=N'_1\sqcup\ldots\sqcup N'_{t'}$ are the two partitions corresponding to $\cT$, resp. $\cT'$ (hence, $|N_i|=k_i$ and  $|N'_i|=k'_i$ for all $i$), then $W\neq\emptyset$ implies that either $N_1\subseteq N'_1$ or 
$N'_1\subseteq N_1$. In particular, we have that $|K_1|=\min(k_1,k_1')$ and if $k_1=k'_1$ , then we have $N_1=N'_1$. 

{\bf Case 1.} Suppose  $a_1>a_1'$.
We would like to show that $C_1=0$.

If $k_1<k_1'$, then $C_1=R\Hom(-G_{a_1}, -G_{a_1'}-\psi_\inf)$, where $-\psi_\inf$ is a contribution from $N$ (there is no contribution to $C_1$ from $D$). 
Hence, $C_1=0$ by Lemma~\ref{-G}~(4).

If $k_1=k_1'$, then there is no contribution from $c_1(\cN)$ to $C_1$ and we have that 
either $C_1=R\Hom(-G_{a_1}, -G_{a_1'})=0$ (if $D$ doesn't include~$D_{K_1}$)
or $C_1=R\Hom(-G_{a_1}, -G_{a_1'}-\psi_\inf)=0$
(if $D$ includes $D_{K_1}$).

Finally, if $k_1>k_1'$ then there are no contributions from $c_1(\cN)$ or $D$ to $C_1$ and 
 $C_1=R\Hom(L, -G_{a_1'})=0$, where $L=-G_{a_1}$ if $a_1<k_1'$ or $L=\cO$ otherwise. In both cases, $C_1=0$ by Lemma~\ref{-G}.

{\bf Case 2.} Suppose  $a_1=a_1'$, $k_1<k_1'$. As in Case 1, we have that 
$C_1=R\Hom(-G_{a_1}, -G_{a_1}-\psi_\inf)=0$.

{\bf Case 3.} Suppose  $a_1=a_1'$, $k_1=k_1'$, $D$ includes $D_{K_1}$.

In this case also $C_1=R\Hom(-G_{a_1}, -G_{a_1}-\psi_\inf)=0$.

{\bf Case 4.} Suppose  $a_1=a_1'$, $k_1=k_1'$, $D$ does not include $D_{K_1}$.

In this case $C_1=R\Hom(-G_{a_1}, -G_{a_1})=\CC$ is useless.
However, we can now proceed exactly as above restricting to the next Losev--Manin factor $\LM_{K_2}$ in $W$. Note that in general, the factors $\LM_{K_i}$ appearing in $W$ need not be positive dimensional, but in this case, since $K_1=K'_1$, we must have that $|K_2|\geq2$ and we can proceed by induction. 
The Lemma follows.
\ep

The Cremona action gives another possible linear order:
\begin{cor}\label{order2}
$\hat\bG$ is an exceptional collection with respect to the order $<'$:
$$(k_1,\ldots,k_t;a_1,\ldots,a_t)>'(k'_1,\ldots,k'_t;a'_1,\ldots,a'_t)$$
if the sequence
$(k_t-a_t,-k_t,k_{t-1}-a_{t-1},-k_{t-1},,\ldots)$ is lexicographically larger than 
$(k'_s-a_s',-k'_s,k'_{s-1}-a'_{s-1},-k'_{s-1},\ldots)$.
\end{cor}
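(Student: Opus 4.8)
The plan is to deduce Corollary~\ref{order2} from Lemma~\ref{order} by conjugating the whole picture through the Cremona involution, rather than redoing the case analysis. Since the objects of $\hat\bG$ are the same and each is exceptional by Lemma~\ref{exceptional} independently of any ordering, the only thing to check is the semiorthogonality $R\Hom(\cT,\cT')=0$ whenever $\cT>'\cT'$. I would begin by letting $c:\,\LM_N\to\LM_N$ be the biregular Cremona automorphism interchanging $0$ and $\infty$; being an isomorphism it induces an autoequivalence $c_*$ of $D^b(\LM_N)$, and autoequivalences preserve $\Hom$-spaces, so
$$R\Hom(\cT,\cT')=R\Hom(c_*\cT,c_*\cT')\qquad\text{for all }\cT,\cT'\in\hat\bG.$$
By the $S_2\times S_N$-equivariance of $\hat\bG$ from Theorem~\ref{mainLM}, the functor $c_*$ permutes the collection $\hat\bG$, so both sides make sense within the collection.

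The single computation I would carry out is to record the action of $c_*$ on the combinatorial data. For $\cT=(i_Z)_*\cL\in\hat\bG$ with data $(k_1,\ldots,k_t;a_1,\ldots,a_t)$, so that $Z=Z_{N_1,\ldots,N_t}$ and $\cL=G_{a_1}^\vee\boxtimes\ldots\boxtimes G_{a_t}^\vee$, the Cremona involution reverses the linear chain and hence carries $Z$ to $Z_{N_t,\ldots,N_1}$; on the factor $\LM_{N_i}$ it acts by the Cremona involution of $\LM_{N_i}$, which sends $G_{a_i}$ to $G_{k_i-a_i}$ by Lemma~\ref{G}~(1) (and $1\le k_i-a_i\le k_i-1$, so the output again lies in $\hat\bG$). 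Combining the reversal with the fibrewise substitution, $c_*\cT$ has data
$$(k_t,k_{t-1},\ldots,k_1;\ k_t-a_t,\ k_{t-1}-a_{t-1},\ \ldots,\ k_1-a_1).$$

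Finally I would match the two orders. Reading off the $>$-sequence $(\text{first }a,\,-\text{first }k,\,\text{second }a,\,-\text{second }k,\ldots)$ of Lemma~\ref{order} from the data of $c_*\cT$ just computed yields
$$(k_t-a_t,\ -k_t,\ k_{t-1}-a_{t-1},\ -k_{t-1},\ \ldots),$$
which is verbatim the sequence assigned to $\cT$ in the definition of $>'$. Hence $c_*\cT>c_*\cT'$ in the order of Lemma~\ref{order} if and only if $\cT>'\cT'$, and so $\cT>'\cT'$ gives $R\Hom(c_*\cT,c_*\cT')=0$ by Lemma~\ref{order}, whence $R\Hom(\cT,\cT')=0$. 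I expect no real obstacle here; the only point needing care is the bookkeeping in the previous step — that reversing the blocks and applying $G_{a_i}\mapsto G_{k_i-a_i}$ on each factor together reproduce the defining sequence of $>'$ exactly, signs on the $k$-entries included — after which the identification of the two orders is immediate.
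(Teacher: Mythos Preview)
Your proposal is correct and is precisely the argument the paper has in mind: the paper simply states ``The Cremona action gives another possible linear order'' before the corollary, leaving the verification implicit. Your computation that $c_*$ sends data $(k_1,\ldots,k_t;a_1,\ldots,a_t)$ to $(k_t,\ldots,k_1;k_t-a_t,\ldots,k_1-a_1)$ and that the $>$-sequence of the latter coincides with the $>'$-sequence of the former is exactly the missing bookkeeping.
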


\begin{rmk}
The linear orders $<$ and  $<'$  are clearly not $S_2\times S_N$ equivariant. The~lemma shows that
both orders refine the $S_2\times S_N$ equivariant relation $\prec$ given by paths in the quiver with arrows
$$\cT\to \cT'\qquad \Leftrightarrow\qquad R\Hom(\cT,\cT')\ne0.$$
In other words, this quiver has no cycles. It would be nice to describe it combinatorially.
It would be even better to explicitly describe the algebra
$$
\bigoplus_{\cT\prec \cT'}R\Hom(\cT,\cT').
$$
Here are some easy observations about the quiver:
\begin{enumerate}
\item If there is an arrow between $\cT$ and $\cT'$
then the corresponding strata have a non-empty intersection.
\item The line bundles can be arranged to be at the right of torsion sheaves in the collection: for any torsion sheaf $\cT'$ in $\hat\bG$ and any line bundle
$\cT=G_a^\vee$ we have (in the notations of the proof of Lemma \ref{order})
$$R\Hom(\cT,\cT')=R\Ga({G_a}_{|Z'}\otimes\cT')=C_1\boxtimes C_2\boxtimes\ldots,$$
and $C_1=R\Hom(L, G_{a'_1}^\vee)$, where $L=G_{a_1}$ if $a_1<k'_1$ and $L=\cO$ otherwise. In both cases $C_1=0$ by Lemma \ref{-G}. 

\item It is not true in general that sheaves can be pre-ordered by codimension of support.
For example, on $\LM_8$, the sheaf $\cT'$ with data $(3,2,3;2,1,1)$ and support $Z'$ has to be to the right of the sheaf $\cT$ with data $(3,5;1,3)$ and support $Z$ such that $Z'\subseteq Z$, as an easy computation as above shows that $R\Hom(\cT,\cT')\neq0$.
\end{enumerate}
\end{rmk}

We give more information about the quiver. We introduce the following terminology:
\begin{defn} 
Let $\cT\in \hat\bG$ with support $Z$:
$$Z=\LM_{K_1}\times\LM_{K_2}\times\ldots\times\LM_{K_t},\quad \cT=G^\vee_{a_1}\boxtimes\ldots\boxtimes G^\vee_{a_t}.$$
\begin{enumerate}
\item We call $\LM_{K_1}$ the first component of $Z$, $\LM_{K_2}$ the second component of $Z$, etc, $\LM_{K_t}$ the last component of $Z$. 

\item We say that we remove the component $\LM_{K_i}$ from $\cT$ if we consider the sheaf $\tilde{\cT}$ given by 
$$\tilde{Z}=\LM_{K_1}\times\ldots\times\LM_{K_{i-1}}\times \LM_{K_{i+1}}\times\ldots\times\LM_{K_t},$$
$$\tilde{\cT}=G^\vee_{a_1}\boxtimes\ldots\boxtimes G^\vee_{a_{i-1}}\boxtimes G^\vee_{a_{i+1}}\boxtimes\ldots\boxtimes G^\vee_{a_t}.$$

\item We say that the \emph{end data} of $\cT$ is $(k_1, k_t;k_1-a_1,a_t)$.
Clearly, different objects in $\hat\bG$ could have the same end data. 

\item Recall from the proof of Lemma \ref{order} that to show $R\Hom(\cT,\cT')=0$
it suffices to show that
$$R\Hom(Li_{W,Z}^*\cT,Li_{W,Z'}^*\cT'\otimes (N+D))=0,$$
where $W=Z\cap Z'$, ~$N$ is the first Chern class of the normal bundle $\cN_{W,Z'}$, i.e., 
the sum of boundary divisors that cut out $Z$, and $D=D_{i_1}+\ldots+D_{i_r}$ is a (possibly empty) 
sum of boundary divisors containing both $Z$ and $Z'$. We let
$$W=\LM_{S_1}\times\LM_{S_2}\times\ldots,$$
$$R\Hom({i_{W,Z}}^*\cT, {i_{W,Z'}}^*\cT'(N+D))=C_{S_1}\boxtimes C_{S_2}\boxtimes \ldots.$$
In what follows we will  refer to $C_{S_i}$ as the 
 \emph{components} of $R\Hom(\cT,\cT')$. 

\end{enumerate}
\end{defn}

Lemma \ref{order} and Lemma \ref{order2} have the following corollary, which can be used as an algorithm to determine, given a pair 
of torsion objects $\cT, \cT'$ in $\hat\GG$, whether $R\Hom(\cT,\cT')=0$ or $R\Hom(\cT',\cT)=0$. 
\begin{cor}\label{partial order}
Let $\cT, \cT'$ be torsion sheaves in $\hat\bG$ with supports $Z$, $Z'$ 
and end data $(k_1, k_t;b_1,b_t)$ and $(k'_1, k'_s;b'_1,b'_s)$. 
If the following inequalities both hold 
$$b_1+b_t\leq b'_1+b'_s\quad k_1+k_t-b_1-b_t\geq k'_1+k'_s-b'_1-b'_s$$
and one of them is a strict inequality, then $R\Hom(\cT,\cT')=0$. 
Moreover, if both inequalities are equalities, then $R\Hom(\cT,\cT')\neq0$ only possibly when  
$$b_1=b'_1,\quad b_t=b'_s,\quad  k_1=k'_1,\quad k_t=k'_s,$$
and the first and last components components are the same, i.e., 
$K_1=K'_1$ and $K_t=K'_s$. 
Whenever all these conditions hold, we have that
$$R\Hom(\cT,\cT')=0\quad\text{ if }\quad R\Hom(\tilde{\cT},\tilde{\cT'})=0,$$ 
where  $\tilde{\cT}$ (respectively $\tilde{\cT'}$), are the sheaves obtained from $\cT$ (respectively $\cT'$) after removing the 
first and last components  $\LM_{K_1}$ and $\LM_{K_t}$.  
\end{cor}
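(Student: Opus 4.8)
The plan is to derive the Corollary entirely from the two linear orders $<$ of Lemma~\ref{order} and $<'$ of Corollary~\ref{order2}, using only that each of them refines the vanishing relation: $\cT>\cT'$ (resp. $\cT>'\cT'$) already implies $R\Hom(\cT,\cT')=0$. The first step is to record the leading coordinates of these orders. For $\cT$ put $u:=a_1$ (the first entry of the sequence compared in $<$) and $v:=k_t-a_t$ (the first entry of the sequence compared in $<'$), and write $u'=a_1'$, $v'=k'_s-a'_s$ for $\cT'$. Since the end data are $b_1=k_1-a_1$, $b_t=a_t$ (and similarly $b'_1=k'_1-a'_1$, $b'_s=a'_s$), a one-line substitution rewrites the two hypotheses as $u+v\ge u'+v'$ (this is the second inequality) and $(k_1+k_t)-(u+v)\le(k'_1+k'_s)-(u'+v')$ (the first).

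With this dictionary I would first dispose of the case where at least one inequality is strict. If $u+v>u'+v'$, then $u>u'$ or $v>v'$; the former gives $\cT>\cT'$ in $<$ and the latter gives $\cT>'\cT'$ in $<'$, so $R\Hom(\cT,\cT')=0$ in either event. If instead $u+v=u'+v'$ but the first inequality is strict, then $k_1+k_t<k'_1+k'_s$. Here, if $u\ne u'$ the same leading comparison is decisive; and if $u=u'$ (hence $v=v'$) I pass to the second coordinates $-k_1$ and $-k_t$: since $k_1+k_t<k'_1+k'_s$ forces $k_1<k'_1$ or $k_t<k'_s$, the tie-break makes $\cT>\cT'$ in $<$ (resp. $\cT>'\cT'$ in $<'$). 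This proves the first assertion.

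For the \emph{moreover} part I assume both inequalities are equalities, so $u+v=u'+v'$ and $k_1+k_t=k'_1+k'_s$, and argue by contraposition: if $R\Hom(\cT,\cT')\ne0$ then neither $\cT>\cT'$ in $<$ nor $\cT>'\cT'$ in $<'$ can hold. The equality $u+v=u'+v'$ excludes $u>u'$ (which would give $\cT>\cT'$) and $u<u'$ (which forces $v>v'$, hence $\cT>'\cT'$), leaving $u=u'$ and $v=v'$, i.e. $a_1=a'_1$ and $k_t-a_t=k'_s-a'_s$. The second-coordinate comparisons then give $k_1\ge k'_1$ and $k_t\ge k'_s$, which together with $k_1+k_t=k'_1+k'_s$ force $k_1=k'_1$ and $k_t=k'_s$. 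These equalities yield $b_1=b'_1$ and $b_t=b'_s$, and, invoking the observation in the proof of Lemma~\ref{order} that $k_1=k'_1$ implies $N_1=N'_1$ (together with its Cremona-dual version for the last factor), also $K_1=K'_1$ and $K_t=K'_s$.

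Finally, under all these coincidences the first and last components of $\cT$ and of $\cT'$ are identical, so in the Künneth factorization $R\Hom(\cT,\cT')=C_{S_1}\otimes\cdots\otimes C_{S_r}$ from the proof of Lemma~\ref{order} the two extreme factors are $C_{S_1}=R\Hom(-G_{a_1},-G_{a_1})=\CC$ and $C_{S_r}=\CC$ (we may assume no boundary divisor contributes a $-\psi_\inf$ to them, since otherwise that factor, and hence the Hom, already vanishes by Lemma~\ref{-G}). The remaining factors $C_{S_2}\otimes\cdots\otimes C_{S_{r-1}}$ are exactly $R\Hom(\tilde{\cT},\tilde{\cT'})$, whence $R\Hom(\cT,\cT')=0$ as soon as $R\Hom(\tilde{\cT},\tilde{\cT'})=0$. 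The step I expect to require the most care, and which I regard as the main obstacle, is precisely this last identification: one must verify that deleting the two identical end components does not alter the middle factors, i.e. that the normal-bundle term $N$ and the admissible boundary sum $D$ feeding each of $C_{S_2},\ldots,C_{S_{r-1}}$ are untouched by the removal. This holds because every divisor meeting the identified first or last component contributes only to $C_{S_1}$ or $C_{S_r}$, but it is the point that makes the inductive reduction of the Corollary legitimate.
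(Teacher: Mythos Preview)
Your proof is correct and follows essentially the same route as the paper's: use the leading coordinates of the two orders $<$ and $<'$ (your $u=a_1$ and $v=k_t-a_t$) to force first $u=u'$, $v=v'$, then $k_1=k'_1$, $k_t=k'_s$, and finally invoke Case~4 of Lemma~\ref{order} to strip off the identical end components. Your write-up is in fact more explicit than the paper's on two points: the case analysis for the strict-inequality part, and the justification that the reduction step is legitimate (the paper simply cites ``Case~4'' and moves on).

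One phrasing to tighten: the display $R\Hom(\cT,\cT')=C_{S_1}\otimes\cdots\otimes C_{S_r}$ is not literally a single K\"unneth identity --- the factors $C_{S_i}$ depend on the auxiliary choice of $D$ in the Koszul resolution, and what one actually proves is that each such product vanishes. You handle this correctly in the parenthetical (when $D$ includes $D_{K_1}$ or the last boundary divisor, the corresponding end factor already dies by Lemma~\ref{-G}; otherwise both end factors are $\CC$ and the middle product matches the analogous product for $\tilde\cT,\tilde\cT'$). So the logic is sound; just avoid presenting it as a single equality. Note also that the implication $k_1=k'_1\Rightarrow K_1=K'_1$ from the proof of Lemma~\ref{order} is stated there under the hypothesis $W\ne\emptyset$; this is harmless here since $W=\emptyset$ already forces $R\Hom(\cT,\cT')=0$, but it is worth saying.
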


\bp
Recall that we have 
$$a_1=k_1-b_1,\quad a'_1=k'_1-b'_1,\quad a_t=b_t,\quad a'_s=b'_s.$$
By Lemma \ref{order}, if $k_1-b_1>k'_1-b'_1$, then $R\Hom(\cT,\cT')=0$. Similarly, by Lemma \ref{order2}, 
if $k_t-b_t>k'_s-b'_s$ then $R\Hom(\cT,\cT')=0$. Since we assume
$$(k_1-b_1)+(k_t-b_t)\geq (k'_1-b'_1)+(k'_s-b'_s),$$
it follows that we must have $k_1-b_1=k'_1-b'_1$, $k_t-b_t=k'_s-b'_s$. Now if $-k_1>-k'_1$, 
again by Lemma \ref{order}, we have $R\Hom(\cT,\cT')=0$. Similarly, if $-k_t>-k'_s$, 
by Lemma \ref{order2}, we have $R\Hom(\cT,\cT')=0$. Hence, we may assume that 
$k_1\geq k'_1$, $k_t\geq k'_s$. But then $(k_1+k'_1)-(k_t+k'_s)\geq0$, while
$$(k_1+k'_1)-(k_t+k'_s)=(b_1+b_t)-(b'_1+b'_s)\geq0.$$
Hence, we must have $k_1=k'_1$, $k_t=k'_s$, and hence, $b_1=b'_1$, $b_t=b'_s$.

If these equalities hold, for the intersection $Z\cap Z'$ to be non-empty, 
we must have that the first and last components components are the same, i.e., 
$K_1=K'_1$ and $K_t=K'_s$. As in the proof of Lemma  \ref{order} (Case 4), 
we can remove the first and last components  $\LM_{K_1}$ and $\LM_{K_t}$, from $Z$ and $Z'$ 
and proceed with the rest. 
\ep

We finish this section by relating line bundles $G_1,\ldots,G_{n-1}$ on $\LM_n$ to the wonderful compactification of $\PGL_n$.
Following \cite{Brion}, we identify $\Pic\,\overline{\PGL_n}$ with the weight lattice of $\PGL_n$.
Let $\alpha_1,\ldots,\alpha_{n-1}$ be simple roots and let $\omega_1,\ldots,\omega_{n-1}$ be fundamental co-weights.
It is shown in \cite{Brion} that $\alpha_1,\ldots,\alpha_{n-1}$ (resp.~$\omega_1,\ldots,\omega_{n-1}$)
span the effective cone (resp.~the nef cone) of $\overline{\PGL_n}$.
We identify $\LM_n$ with the closure of the maximal torus in $\PGL_n$.

\begin{prop}\label{BrionIdea}
Divisors on $\overline{\PGL_n}$ corresponding to  $\omega_1,\ldots,\omega_{n-1}$
 restrict to divisors $G_1,\ldots,G_{n-1}$ on the Losev--Manin space  $\LM_n$.
 \end{prop}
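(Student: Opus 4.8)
The plan is to identify both sides as divisor classes on $\LM_n$ and match them by restriction from $\overline{\PGL_n}$. Following \cite{Brion}, the wonderful compactification $\overline{\PGL_n}$ contains $\LM_n$ as the closure $\bar T$ of the maximal torus $T$, and the boundary of $\LM_n$ is the intersection of $\bar T$ with the boundary divisors of $\overline{\PGL_n}$. First I would set up the restriction map $\Pic\,\overline{\PGL_n}\to\Pic\,\LM_n$ explicitly on the level of the boundary divisors: the $n-1$ boundary divisors $S_1,\ldots,S_{n-1}$ of $\overline{\PGL_n}$ (indexed by simple roots) restrict to sums of the boundary divisors $\de_J$ of $\LM_n$, and the fundamental coweights $\omega_1,\ldots,\omega_{n-1}$, which by \cite{Brion} generate the nef cone, restrict to nef classes on $\LM_n$. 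The content of the proposition is that these nef generators land exactly on the line bundles $G_1,\ldots,G_{n-1}$ of Definition~\ref{defG}.

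The key computational step is to make the restriction map concrete in Kapranov coordinates. I would use the standard fact that $\omega_a=\sum_{b} (\text{Cartan data})\,\alpha_b$ expresses the fundamental coweights in terms of simple roots, and then track how each simple-root boundary divisor of $\overline{\PGL_n}$ meets $\bar T=\LM_n$. Under the identification of $\LM_n$ with the iterated blow-up of $\bP^{n-1}$ via the $\psi_0$-Kapranov model, the hyperplane class $H=\psi_0$ and the exceptional divisors correspond to the $\de_J$'s, so the restriction of $\omega_a$ will come out as an explicit combination $a\,\psi_0 - \sum_{|J|<a}(a-|J|)\,\de_J$. Comparing this with
$$
G_a=a\psi_0-(a-1)\sum_{k}\de_k-(a-2)\sum_{k,l}\de_{kl}-\ldots-\sum_{|J|=a-1}\de_J,
$$
the two formulas coincide coefficient by coefficient, since the coefficient of $\de_J$ in both is $-(a-|J|)$ for $1\le|J|\le a-1$ and $0$ otherwise. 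This matching is the heart of the proof.

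A clean way to organize the verification, and to avoid a brute-force Cartan-matrix computation, is to exploit the symmetry and restriction properties already established. By Lemma~\ref{G}(1)--(2), the $G_a$ are $S_N$-invariant, satisfy $G_1=\psi_0$, $G_{n-1}=\psi_\infty$, and are interchanged by the Cremona involution as $G_a\leftrightarrow G_{n-a}$; on the $\overline{\PGL_n}$ side these are precisely the symmetries of the nef generators $\omega_a$ (the Weyl-group invariance and the diagram automorphism $\omega_a\leftrightarrow\omega_{n-a}$ coming from the outer automorphism of $\PGL_n$). I would then pin down the restrictions inductively using the boundary-restriction formula of Lemma~\ref{G}(3), namely
$$
{G_a}_{|\de_{N_1}}=\begin{cases}G_a\boxtimes\cO & a<|N_1|\\ \cO & a=|N_1|\\ \cO\boxtimes G_{a-|N_1|} & a>|N_1|,\end{cases}
$$
and check that the restrictions of the $\omega_a$ to boundary strata of $\overline{\PGL_n}$ (which are themselves products of smaller wonderful compactifications) obey the same recursion. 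Since a nef class on the toric variety $\LM_n$ is determined by its restrictions to the boundary together with its degree, matching the recursion plus the base cases $G_1=\psi_0$, $G_{n-1}=\psi_\infty$ forces the equality.

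The main obstacle I expect is the bookkeeping translating Brion's root/weight description of $\Pic\,\overline{\PGL_n}$ and its boundary into the $\de_J$-basis of $\Pic\,\LM_n$: one must correctly identify which simple-root boundary divisor of $\overline{\PGL_n}$ restricts to which sum of $\de_J$'s on the torus closure, and verify that the torus closure really is $\LM_n$ with this boundary correspondence (rather than some other compactification). Once that dictionary is fixed, the rest is the coefficient comparison above, which is the routine "direct calculation" the paper defers.
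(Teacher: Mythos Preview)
Your primary approach---express $\omega_a$ in terms of the simple roots via the inverse Cartan matrix, then identify how each simple-root boundary divisor of $\overline{\PGL_n}$ restricts to $\LM_n$, and finally compare coefficients---is exactly what the paper does. The paper shows that the boundary divisors $D_1,\ldots,D_{n-1}$ of $\overline{\PGL_n}$ (indexed by simple roots) restrict to the \emph{total} boundary divisors $\Delta_j=\sum_{|J|=j}\de_{0J}$ of $\LM_n$, then writes $G_i=\sum_j B_{ij}\Delta_j$ directly from the definition and the formula $\psi_0=\sum_j\frac{n-j}{n}\Delta_j$, and observes that the matrix $B$ (with $B_{ij}=i(n-j)/n$ for $i\le j$, symmetric) is the inverse of the Cartan matrix of $A_{n-1}$, so that $\omega_i=\sum_j B_{ij}\alpha_j$ on the $\overline{\PGL_n}$ side. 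The ``main obstacle'' you flag---pinning down which $D_i$ restricts to which $\Delta_j$---is resolved in the paper not by direct Kapranov-model bookkeeping but by spherical-variety theory: one computes the divisorial valuation $\rho(D_i)$ on $\cB$-semiinvariant functions, identifies it with the fundamental coweight $\omega_i=e_1+\ldots+e_i$, and then invokes the fact that these are exactly the primitive ray generators of the Weyl-chamber fan of $\LM_n$ (citing \cite{Brion} and \cite[Lemma~6.1.6]{BK}).

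Your alternative route via the recursion of Lemma~\ref{G}(3) is not what the paper uses, and as stated it has a gap: the assertion that ``a nef class on $\LM_n$ is determined by its restrictions to the boundary together with its degree'' is not a standard fact and would need proof. Restrictions to boundary divisors lose information (e.g.\ adding a multiple of a boundary divisor changes the class but can be invisible on some restrictions), and ``degree'' is ambiguous on a higher-dimensional variety. One could make an inductive argument work by instead checking intersection numbers with all toric boundary \emph{curves}, but at that point the direct Cartan-matrix computation is cleaner.
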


\begin{proof}
First we consider divisors $D_1,\ldots,D_n$ on $\overline{\PGL_n}$ which correspond to simple roots $\alpha_1,\ldots,\alpha_{n-1}$.
We will show that they restrict to total boundary divisors $$\Delta_1=\sum\delta_{0i},\quad \ldots,\quad \Delta_{n-1}=\sum\delta_{0i_1\ldots i_{n-1}}$$ 
 on the Losev--Manin space  $\LM_n$.
 Indeed, it is known (see e.g.~\cite{Brion}) that $D_1,\ldots D_{n-1}$ are the boundary divisors of $\overline{\PGL_n}$, i.e.
 $$\overline{\PGL_n}\setminus \PGL_n=D_1\cup \ldots \cup D_{n-1}.$$
 In particular, every $D_i$ restricts to a linear combination of boundary divisors of $\LM_n$.
 Since each of these divisors is  $\PGL_n$-invariant (acting by conjugation), the restriction is invariant under $S_n$ (=Weyl group),
 i.e.~it is a linear combination of total boundary divisors. 
 
$\overline{\PGL_n}$ is a spherical homogeneous space for the group $\PGL_n\times \PGL_n$ extending its action on $\PGL_n$ by left and right translations.
 The group of semi-invariant functions $\Lambda=k(\PGL_n)^{(\cB)}/k^*$ (where $\cB=B^-\times B$ is a Borel subgroup of $\PGL_n\times \PGL_n$)
 is identified with the root lattice of $\PGL_n$, which in turn is identified with the lattice of characters $M=k(T)^{(T)}/k^*$ of the maximal torus in $\PGL_n$, via restriction of $\cB$-semi-invariant functions.
 Every boundary divisor $D_i$ determines the functional $\rho(D_i):\,\Lambda\to\ZZ$ 
 (and so an element of the dual weight lattice $\Lambda^*=\ZZ^n/\langle 1,\ldots,1\rangle$)
 by taking a divisorial valuation of a function in $\Lambda$ along $D_i$. In~fact we have $\rho(D_i)=\omega_i=e_1+\ldots+e_i\mod \langle 1,\ldots,1\rangle$,
 the fundamental coweight (see e.g.~\cite{Brion}). These vectors span the  Weyl chamber and the fan of $\LM_n$ (as a toric variety) is precisely the fan of its Weyl group translates. 
 Moreover, vectors $\omega_i$ are primitive vectors along the rays which give boundary divisors $\delta_{0,1,\ldots,i}$, see \cite{LM}. So we are done by \cite[Lemma 6.1.6]{BK}.
 
 By pulling back a coordinate hyperplane in $\bP^{n-1}$ and symmetrizing, we get the formula
 $$\psi_0={n-1\over n}\Delta_1+\ldots+{1\over n}\Delta_{n-1}.$$
 Combining it with Definition~\ref{defG} yields the following formula for $G_i$'s:
 $$G_i=\sum_{j=1}^{n-1}B_{ij}\Delta_j,$$
 where $B_{ij}=i{n-j\over n}$ if $i\le j$ and $B_{ij}=B_{ji}$ if $i>j$.
It is well-known and easy to prove that the inverse of the matrix $B$ is the Cartan matrix of the root system $A_{n-1}$, and therefore
 $$\psi_i=\sum_{j=1}^{n-1}B_{ij}\alpha_j,$$
which finishes the proof.
\end{proof}


\section{Fullness of the exceptional collection $\hat{\Bbb G}$}\label{fullness!!!}

We will need the following more general set-up. 
\begin{defn} For every integer $r\ge -1$, define a contravariant functor $X^r$ from $\NN$
to the category of smooth projective varieties as follows. Let $X^r_N$ is an iterated blow-up of $\bP^{n+r}$ in $n$ general points (marked by $N$)
followed by the blow-up of the $n\choose 2$ proper transforms of the lines passing through two points, the $n\choose 3$ proper transforms of the planes 
passing through three points, etc. For example, 
$$X^{-1}_N=\LM_N, \quad X^{r}_\emptyset=\bP^r.$$
For every $M\subseteq N$, the forgetful morphism $\pi_{N\setminus M}: X^r_N\to X^r_M$
is induced by a linear projection from points in $N\setminus M$.

For every subset $S\subseteq N$ of cardinality at most $n+r-1$, we denote by $E_S\subseteq X^r_N$ the exceptional divisor over a  subspace
spanned by points in $S$.
\end{defn}

\begin{prop}
All conditions of Theorem~\ref{prototype} are satisfied. Thus we have a semi-orthogonal decomposition 
$$
D^b(X^r_S)=\langle D^b_{cusp}(X^r_S), \quad \{L\pi_K^*D^b_{cusp}(X^r_{S\setminus K})\}_{K\subset S},\quad L\pi_N^*D^b(\PP^r)\rangle,
$$
where $K$ runs over proper subsets of $S$ in the order of increasing cardinality. 
\end{prop}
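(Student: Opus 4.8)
The plan is to verify the three hypotheses \eqref{fbzfbsdb}, \eqref{a,sbgas}, \eqref{asKJGBAMS} of Theorem~\ref{prototype} for the functor $X^r$ and then quote that theorem verbatim; this is exactly the strategy already used to prove Proposition~\ref{semiort} (the case $r=-1$) and Proposition~\ref{semiortMN}. The forgetful maps $\pi_{N\setminus M}\colon X^r_N\to X^r_M$ are induced by linear projections from the blown-up points, so each is a fibration whose generic fibre is (birational to) a projective line; the content is purely that the three sheaf-theoretic conditions hold, after which the semi-orthogonal decomposition is automatic.

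First I would check \eqref{fbzfbsdb}, that $R{\pi_i}_*\cO_{X^r_S}=\cO_{X^r_{S\setminus\{i\}}}$. The map $\pi_i$ is a composition of a linear projection $\bP^{n+r}\dashrightarrow\bP^{n+r-1}$ from the point $p_i$ with the blow-up morphisms on either side; since $X^r_S\to X^r_{S\setminus\{i\}}$ is a morphism of smooth projective varieties that is birational to a $\bP^1$-bundle and all the exceptional loci are themselves rational, the higher direct images vanish and the zeroth is $\cO$. Concretely I would argue that the image $X^r_{S\setminus\{i\}}$ has at worst rational singularities (it is smooth here, being another iterated blow-up of projective space), the general fibre is rational with $H^{>0}(\cO)=0$, and the special fibres over the exceptional strata are trees/chains of rational varieties, so $R{\pi_i}_*\cO=\cO$ by the standard cohomology-and-base-change argument combined with rationality of fibres, exactly as in Lemma~\ref{easy acyclic}'s philosophy.

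Next I would treat the $\Tor$-independence condition \eqref{a,sbgas}. For $i\neq k$ the two forgetful maps $\pi_i,\pi_k$ out of $X^r_{S\setminus\{k\}}$ and $X^r_{S\setminus\{i\}}$ down to the common base $X^r_{S\setminus\{i,k\}}$ come from projecting away two distinct points $p_i,p_k$; because these projections are transversal (the centres being in general position), the relevant fibre products are again smooth of the expected dimension, which is the geometric manifestation of $\Tor$-independence. I would verify this dimension count, or equivalently check that the scheme-theoretic fibre product $X^r_{S\setminus\{i\}}\times_{X^r_{S\setminus\{i,k\}}}X^r_{S\setminus\{k\}}$ is reduced of pure expected dimension, which gives $\Tor$-independence in the sense of \cite[Def.~1.6]{Kuznetsov Rat}.

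Finally, condition \eqref{asKJGBAMS}, namely $R{\alpha_{i,k}}_*\cO_{X^r_S}=\cO_Y$ for the induced map $\alpha_{i,k}\colon X^r_S\to Y$ to the fibre product, is the only subtle point and I expect it to be the main obstacle. As in the proofs of Propositions~\ref{semiort} and~\ref{semiortMN}, the map $\alpha_{i,k}$ is birational, so it suffices to know that $Y$ has rational singularities; then $R{\alpha_{i,k}}_*\cO_{X^r_S}=\cO_Y$ follows because a resolution of a rational singularity pushes the structure sheaf forward to the structure sheaf. Here $Y$ is a fibre product of toric (or at least toroidal) morphisms of iterated blow-ups of projective space, so I would argue that $Y$ is toroidal, hence has rational singularities, closing the verification exactly as in the $r=-1$ case. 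With all three hypotheses confirmed, Theorem~\ref{prototype} delivers the stated s.o.d., with $L\pi_N^*D^b(\bP^r)$ as the terminal block since $X^r_\emptyset=\bP^r$.
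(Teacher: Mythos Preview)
Your proposal is correct and follows the same template the paper uses for Propositions~\ref{semiort} and~\ref{semiortMN}; in fact the paper gives no separate proof for this proposition, leaving it implicit that the argument is identical. The only substantive point, as you identify, is \eqref{asKJGBAMS}: $\alpha_{i,k}$ is birational and the fibre product $Y$ has toroidal (hence rational) singularities, so $R{\alpha_{i,k}}_*\cO=\cO_Y$---exactly the reasoning in the $r=-1$ case. One small simplification: the forgetful maps $\pi_i$ here are flat (equidimensional between smooth varieties), so \eqref{a,sbgas} is automatic and your dimension-count discussion, while correct, is more than is needed.
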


\begin{notn}
For every $i\in N$, we have a birational morphism
$$f_i:\,X_N^r\to X^{r+1}_{N\setminus i},$$
obtained by blowing down exceptional divisors $E_S$, $i\in S$,
in the order of decreasing cardinality. 
\end{notn}

\begin{defn} ({\bf Strata in $X^r_N$.})
Consider partitions $$N=N_1\sqcup\ldots\sqcup N_k,\quad |N_u|>0  \quad (u=1\ldots, k-1).$$  

Denote
$$Z_{N_1,\ldots, N_k}=E_{N_1}\cap E_{N_1\cup N_2}\cap\ldots\cap E_{N_1\cup \ldots\cup N_{k-1}}.$$ 
We call  $Z_{N_1,\ldots, N_k}$ a stratum in $X^r_N$. We call a stratum in  $X^r_N$ to be massive, if it is 
the image of a massive stratum in $\LM_{n+r+1}$ via the the birational map $\LM_{n+r+1}\ra X^r_N$ 
which is the composition of the maps $f_i$, for those $i\notin N$.  
\end{defn}

For $r\geq0$, each stratum $Z_{N_1,\ldots, N_k}$ is the image of a stratum in $\LM_{n+r+1}$. 
For all $r\geq-1$, we can identify 
$$Z_{N_1,\ldots, N_k}\simeq \LM_{N_1}\times\ldots\times\LM_{N_{k-1}}\times X^r_{N_k},$$ 
where $X^r_{N_k}$ is the blow-up of $\PP^{|N_k|+r}$ at the linear subspaces spanned by the points in $N_k$.
If $r\geq0$, a stratum $Z_{N_1,\ldots, N_k}$ is massive if and only if $|N_u|\geq2$ for all $u=1,\ldots, k-1$ and $|N_k|+r>0$.

\begin{defn}
We let $\hat\bG_N^r$ be a collection of objects in $D^b(X_N^r)$ defined inductively
as follows: 
$$\hat\bG_N^{-1}:=\hat\bG_N,\qquad \hat\bG_N^{r+1}={Rf_i}_*(\hat\bG_N^{r}).$$
\end{defn}

\begin{defn}\label{defG2}
Consider the following line bundles on $X^r_N$ ($r\geq-1$):
$${G_a^r}^\vee=-aH+(a-1)\sum_{i        \in N} E_i+(a-2)\sum_{i,j\in N} E_{ij}+\ldots,\quad (a=1,\ldots,n+r),$$
(as long as the coefficient in front of the exceptional divisor is positive).
\end{defn}

\begin{lemma}\label{axfbafbf}\label{pushG}
For every $E\in D^b_{cusp}(X_N^{r})$, ${Rf_i}_*(E)\in D^b_{cusp}(X_{N\setminus i}^{r+1})$.
In particular, the collection $\hat\bG_N^{r}$ is contained in $D^b_{cusp}(X_N^r)$.  Moreover,  we have
$$R{f_i}_*{G_a^r}^\vee={G_a^{r+1}}^\vee\quad (a=1,\ldots,n+r).$$
In particular, $\hat\bG_N^{r}$ contains the line bundles ${G_a^r}^\vee$ ($1\leq a\leq n+r$)
and the following  torsion objects:
$$\cT=(i_{Z})_*\cL,\quad \cL=G_{a_1}^\vee\boxtimes\ldots\boxtimes G_{a_{k-1}}^\vee\boxtimes {G^r_{a_k}}^\vee$$
for all massive strata $Z=Z_{N_1,\ldots,N_k}$ in  $X^r_N$ and all $1\leq a_u\leq |N_u|-1$ when $u=1,\ldots,k-1$, and all 
$1\leq a_k\leq |N_k|+r$. 
\end{lemma}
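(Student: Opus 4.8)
The plan is to prove all four assertions simultaneously by induction on $r\ge -1$, reducing everything to the single line-bundle identity $Rf_{i*}{G_a^r}^\vee={G_a^{r+1}}^\vee$, which is the computational core. The base case $r=-1$ is exactly Definition~\ref{defG}: $\hat\bG_N^{-1}=\hat\bG_N$ consists of the line bundles $G_a^\vee={G_a^{-1}}^\vee$ for $1\le a\le n-1=n+r$ together with the torsion sheaves on the (all-$\LM$) massive strata of $\LM_N$, and these all lie in $D^b_{cusp}(\LM_N)$ by Lemma~\ref{-G}(2) and the construction. For the inductive step I pass from $X_N^r$ to $X_{N\setminus i}^{r+1}$ along $f_i$, where throughout $n=|N|$; note that the target range $1\le a\le (n-1)+(r+1)=n+r$ agrees with the source range $1\le a\le n+r$.

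For the cuspidality assertion (and hence for $\hat\bG_N^r\subset D^b_{cusp}(X_N^r)$) I would first record that for every $j\in N\setminus i$ the forgetful map commutes with the contraction, i.e.~$\pi_j\circ f_i=f_i\circ\pi_j$ as morphisms $X_N^r\to X_{N\setminus\{i,j\}}^{r+1}$, since blowing down the divisors over $p_i$ and projecting away from $p_j$ are geometrically independent. Equality of morphisms forces equality of derived pushforwards, so for $E\in D^b_{cusp}(X_N^r)$,
$$R\pi_{j*}Rf_{i*}E=R(\pi_j\circ f_i)_*E=R(f_i\circ\pi_j)_*E=Rf_{i*}R\pi_{j*}E=0.$$
Since cuspidality is tested on the one-point forgetful maps (every $\pi_K$ factors as $\pi_{K\setminus j}\circ\pi_j$), this shows $Rf_{i*}E\in D^b_{cusp}(X_{N\setminus i}^{r+1})$; combined with the base case and the definition $\hat\bG_N^{r+1}=Rf_{i*}\hat\bG_N^r$ this gives the containment in the cuspidal block.

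The heart of the argument, and the main obstacle, is the identity $Rf_{i*}{G_a^r}^\vee={G_a^{r+1}}^\vee$. I would factor $f_i$ into the individual smooth blow-downs $\phi_S$ contracting $E_S$ (with $i\in S$), taken in order of decreasing $|S|$ as in the definition of $f_i$. At the stage contracting $E_S$, whose center has codimension $c=n+r-|S|+1$, the current line bundle has coefficient $m=a-|S|$ along $E_S$; proper-transform bookkeeping shows this coefficient is unaffected by the earlier, larger blow-downs, because the pullback of $E_S$ acquires only corrections $E_{S'}$ with coefficient $1$ along the proper transform. Writing the current bundle as $\phi_S^*L'+mE_S$ and invoking the projection formula reduces the step to $R\phi_{S*}\cO(mE_S)=\cO$, which holds precisely when $0\le m\le c-1$: since $\cO_{E_S}(E_S)$ is the relative $\cO(-1)$ on the $\PP^{c-1}$-bundle $E_S$, the fibrewise vanishing $H^\bullet(\PP^{c-1},\cO(-m))=0$ for $1\le m\le c-1$ propagates through the exact sequences $0\to\cO((m-1)E_S)\to\cO(mE_S)\to\cO_{E_S}(-m)\to0$. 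The required bound $m=a-|S|\le n+r-|S|=c-1$ is exactly the hypothesis $a\le n+r$ (and $m=0$ when $|S|\ge a$, trivially in range). After contracting all $E_S$ with $i\in S$ and using $f_i^*H'=H$ we are left with $-aH'+\sum_{S\subseteq N\setminus i,\,|S|<a}(a-|S|)E_S={G_a^{r+1}}^\vee$. This coefficient tracking through the iterated blow-down is the delicate point, and it parallels the acyclicity bookkeeping of Lemma~\ref{easy acyclic}.

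Finally, to see that $\hat\bG_N^{r+1}=Rf_{i*}\hat\bG_N^r$ \emph{contains} each listed object I exhibit it as the pushforward of a listed object, routing the index $i$ through the last factor. A target line bundle ${G_a^{r+1}}^\vee$ ($a\le n+r$) is the image of ${G_a^r}^\vee$ by the identity above. For a target torsion sheaf on a massive stratum $Z'=\LM_{M_1}\times\cdots\times\LM_{M_{l-1}}\times X_{M_l}^{r+1}$ carrying $G_{b_1}^\vee\boxtimes\cdots\boxtimes{G_{b_l}^{r+1}}^\vee$, I take the source stratum $Z=\LM_{M_1}\times\cdots\times\LM_{M_{l-1}}\times X_{M_l\cup i}^{r}$ with sheaf $G_{b_1}^\vee\boxtimes\cdots\boxtimes{G_{b_l}^{r}}^\vee$. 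Because $i$ now lies in the last index set, $f_i$ restricts on $Z$ to the identity on the first $l-1$ factors and to the contraction $X_{M_l\cup i}^r\to X_{M_l}^{r+1}$ on the last (using compatibility of the tower with the product structure of strata); then $Rf_{i*}(i_Z)_*=(i_{Z'})_*R(f_i|_Z)_*$ together with Künneth and the line-bundle identity applied to the last factor yields exactly the target sheaf. One checks that the massiveness conditions and the ranges $1\le b_l\le |M_l\cup i|+r=|M_l|+(r+1)$ match, so the source object is itself on the list, completing the induction.
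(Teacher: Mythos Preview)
Your proof is correct and follows essentially the same route as the paper's: the cuspidality claim via the commutative square $\pi_j\circ f_i=f_i\circ\pi_j$, the line-bundle identity via the projection formula together with $R\phi_*\cO(mE)=\cO$ for $1\le m\le c-1$ (which is exactly the paper's Lemma~\ref{push exceptional}), and the torsion objects by restricting $f_i$ to strata and applying the line-bundle identity on the last factor. The one organizational difference is that you factor $f_i$ into single blow-downs and track the coefficient of each $E_S$ through the sequence, whereas the paper computes the global pullback $f_i^*{G_a^{r+1}}^\vee$ in one shot (using that $f_i^*E_J=E_J$ for $J\not\ni i$), reads off the discrepancy $F={G_a^r}^\vee-f_i^*{G_a^{r+1}}^\vee=\sum_{i\in S,\,|S|<a}(a-|S|)E_S$, and then invokes Lemma~\ref{push exceptional} level by level. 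The paper's approach sidesteps the ``delicate'' step-by-step bookkeeping you flag, but both are valid and the key inequality $a-|S|\le n+r-|S|=\codim-1$ is the same in either formulation.
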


\bp
The first statement follows from the commutative diagram
\begin{equation}\label{diagram}
\begin{CD}
X_N^r    @>f_i>>  X^{r+1}_{N\setminus i}\\
@V{\pi_j}VV        @VV{\pi_j}V\\
X_{N\setminus j}^r     @>f_i>>  X_{N\setminus i,j}^{r+1} 
\end{CD}
\end{equation}

To prove the rest of the Lemma, it suffices to prove 
 $R{f_i}_*{G_a^r}^\vee={G_a^{r+1}}^\vee$. Denote $t=\text{min}\{a-1, n\}$.  
Note that 
\begin{equation}
Lf_i^*{G_a^{r+1}}^\vee=f_i^*{G_a^{r+1}}^\vee=-aH+(a-1)\sum_{j\in N\setminus\{i\}} E_j+
\end{equation}
$$+(a-2)\sum_{j,k\in N\setminus\{i\}} E_{jk}+\ldots+
(a-t)\sum_{J\subseteq N\setminus\{i\}, |J|=t} E_{J},$$
as the pull-backs $f_i^*E_J$ are simply the proper transforms of the divisors $E_J$ under the blow-up map $f_i$. 
In particular, $f_i^*{G_a^{r+1}}^\vee={G_a^{r+1}}^\vee+F$, where 
$$F=(a-1)E_i+(a-2)\sum_{j\in N\setminus\{i\}} E_{ij}+\ldots+
(a-t)\sum_{J\subseteq N, i\in J, |J|=t} E_{J}.$$
Note that the coefficient $a-|J|$ of any $E_J$ appearing in $F$ satisfies 
$$1\leq a-|J|\leq n+r-j<n+r-j+1=\text{codim}_{X^r_N} E_J.$$
This implies, after repeatedly applying Lemma \ref{push exceptional} to the morphisms that successively blow down the divisors $E_J$ with $i\in J$, for a fixed $|J|$ (starting from the larger $|J|$ to the smaller), that $R{f_i}_*\cO(F)=\cO$. It follows that 
$$R{f_i}_*{G_a^r}^\vee=R{f_i}_*\big({f_i}^*{G_a^{r+1}}^\vee\otimes\cO(F)\big)={G_a^{r+1}}^\vee\otimes R{f_i}_*\cO(F)={G_a^{r+1}}^\vee.$$
\ep

The following Lemma is well known:
\begin{lemma}\label{push exceptional}
Let $p: X\ra Y$ be a blow-up of a smooth subvariety $Z$ of codimension $r+1$ of a smooth projective variety $Y$. Let $E$ be the exceptional divisor. Then 
for all $1\leq i\leq r$ we have
$$Rp_*\cO_X(iE)=\cO_Y.$$
\end{lemma}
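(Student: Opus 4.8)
The plan is to prove the statement by induction on $i$, peeling off one copy of $E$ at a time via the standard restriction sequence and reducing everything to the well-known base case $i=0$, namely $Rp_*\cO_X=\cO_Y$ (which holds because $p$ is a proper birational morphism onto the smooth --- hence rational --- variety $Y$ with smooth source $X$).

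First I would record the two geometric inputs. Since $Z\subset Y$ is smooth of codimension $r+1$, the exceptional divisor is the projectivized normal bundle $\pi:\,E=\PP(\cN_{Z/Y})\to Z$, a $\PP^r$-bundle, and $\cO_X(E)|_E\cong\cO_E(-1)$, the relative tautological bundle. Consequently $\cO_X(iE)|_E\cong\cO_E(-i)$, which on each fiber $\PP^r$ is $\cO_{\PP^r}(-i)$. Next, for each $i$ with $1\le i\le r$ I would tensor the structure sequence $0\to\cO_X(-E)\to\cO_X\to\cO_E\to0$ by $\cO_X(iE)$ to obtain
$$0\to\cO_X((i-1)E)\to\cO_X(iE)\to\cO_E(-i)\to0,$$
and apply $Rp_*$ to get the distinguished triangle
$$Rp_*\cO_X((i-1)E)\to Rp_*\cO_X(iE)\to Rp_*\cO_E(-i)\to$$
Because $p|_E$ factors as $E\xrightarrow{\pi}Z\xrightarrow{j}Y$, where $j:\,Z\hra Y$ is the inclusion, we have $Rp_*\cO_E(-i)=Rj_*R\pi_*\cO_E(-i)$.

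The crux is the vanishing $R\pi_*\cO_E(-i)=0$ for $1\le i\le r$. On every fiber $\PP^r$ the bundle $\cO_{\PP^r}(-i)$ has vanishing cohomology in all degrees precisely when $1\le i\le r$: the group $H^0$ vanishes because $i>0$, the group $H^r$ vanishes by Serre duality because $i\le r$, and the intermediate cohomology of any line bundle on $\PP^r$ is zero. Since $\pi$ is flat with these constant fiberwise vanishings, cohomology and base change yields $R\pi_*\cO_E(-i)=0$, hence $Rp_*\cO_E(-i)=0$.

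Feeding this into the triangle gives an isomorphism $Rp_*\cO_X((i-1)E)\cong Rp_*\cO_X(iE)$ for each $1\le i\le r$. Chaining these isomorphisms up from the base case $i=0$, where $Rp_*\cO_X=\cO_Y$, gives $Rp_*\cO_X(iE)=\cO_Y$ for all $0\le i\le r$, in particular for $1\le i\le r$, as claimed. I expect the only genuinely substantive step to be the fiberwise cohomology vanishing combined with base change; the identification of $E$ with $\PP(\cN_{Z/Y})$ and $\cO_X(E)|_E$ with $\cO_E(-1)$, and the bookkeeping in the induction, are routine.
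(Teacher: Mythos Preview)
Your proof is correct and follows essentially the same approach as the paper: both use the short exact sequence $0\to\cO_X((i-1)E)\to\cO_X(iE)\to\cO_E(-i)\to0$, apply $Rp_*$, and invoke the vanishing $Rp_*\cO_E(-i)=0$ for $1\le i\le r$ to reduce inductively to the base case $Rp_*\cO_X=\cO_Y$. You supply more detail on the vanishing step (identifying $E$ as a $\PP^r$-bundle and appealing to cohomology and base change), but the argument is the same.
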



\begin{lemma}\label{exceptional2}
All sheaves in $\hat\bG^r_N$ are exceptional.
\end{lemma}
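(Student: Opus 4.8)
The goal is to prove that all sheaves in $\hat\bG^r_N$ are exceptional, generalizing Lemma~\ref{exceptional} (the $r=-1$ case) to arbitrary $r\ge-1$. The plan is to follow the same strategy as in the proof of Lemma~\ref{exceptional}, reducing the computation of $R\Hom(\cT,\cT)$ for a torsion sheaf $\cT=(i_Z)_*\cL$ to a cohomology vanishing statement on the structure sheaf of the massive stratum $Z$, and then resolving $\cO_Z$ by a Koszul complex built from the boundary divisors cutting out $Z$.

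First I would observe that every sheaf in $\hat\bG^r_N$ has the form $Ri_*Li^*\cL = i_*i^*\cL$, where $\cL$ is an invertible sheaf on $X^r_N$ and $i:\,Z\hookrightarrow X^r_N$ is the embedding of a massive stratum (for $r\ge0$ this uses Lemma~\ref{pushG}, which identifies the torsion objects, and for $t=1$ one has honest line bundles, which are automatically exceptional). Exactly as in Lemma~\ref{exceptional}, the projection formula gives
$$R\Hom(Ri_*Li^*\cL, Ri_*Li^*\cL)=R\Hom(Ri_*\cO_Z, Ri_*\cO_Z),$$
so it suffices to prove that $Ri_*\cO_Z=i_*\cO_Z$ is an exceptional object, independently of the line bundle $\cL$. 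Writing $Z=D_1\cap\ldots\cap D_s$ as an intersection of boundary divisors $E_J$ and resolving $i_*\cO_Z$ by the Koszul complex
$$\ldots\ra\oplus_{i<j}\cO(-D_i-D_j)\ra\oplus_i\cO(-D_i)\ra\cO\ra i_*\cO_Z\ra0,$$
the problem reduces to showing that $R\Gamma(\cO_Z(D_{i_1}+\ldots+D_{i_k}))=0$ for every nonempty subset $\{i_1,\ldots,i_k\}\subseteq\{1,\ldots,s\}$.

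The key difference from the $r=-1$ case is the factorization $Z\simeq\LM_{N_1}\times\ldots\times\LM_{N_{k-1}}\times X^r_{N_k}$ appearing in the stratum description, so the last factor is now $X^r_{N_k}$ rather than a Losev--Manin space. I would therefore analyze the restriction $\cO_Z(D_i)$ as an external tensor product over these factors: on the $\LM_{N_j}$ factors the restriction of a boundary divisor is $\cO(-\psi_\infty)\boxtimes\cO(-\psi_0)$ in two adjacent slots (as recorded in the proof of Lemma~\ref{exceptional}), and the new point is to check the analogous restriction on the $X^r_{N_k}$ factor, where the relevant divisors are the exceptional divisors $E_S$. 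By the Künneth formula, $R\Gamma$ of the external product vanishes as soon as one tensor factor is acyclic, and on each factor the acyclicity of the relevant twist of $-\psi_\infty$, $-\psi_0$, or an exceptional-divisor class follows from Lemma~\ref{easy acyclic} (which is stated for both $\M_{0,n}$ and $\LM_N$ in a Kapranov model and applies to the blow-up models $X^r_{N_k}$).

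The main obstacle I anticipate is the bookkeeping on the last factor $X^r_{N_k}$: I must verify that the restriction of each boundary divisor $D_i=E_J$ to $Z$, pushed into the $X^r_{N_k}$ slot, lands in the acyclic range of Lemma~\ref{easy acyclic}, and that at least one factor of the external product is genuinely acyclic for every nonempty $\{i_1,\ldots,i_k\}$. Since each $D_i$ contributes a $(-\psi_\infty)$ or $(-\psi_0)$ type term to at least one of the $\LM$ factors (or an effective exceptional class on $X^r_{N_k}$), and these are precisely the divisor classes covered by the acyclicity range $1\le d\le n-3$, $0\le m_I\le n-3-\#I$ in the appropriate Kapranov model, the vanishing of $R\Gamma$ on that factor will force the whole Künneth product to vanish. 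Granting this case analysis, $i_*\cO_Z$ is exceptional and the lemma follows.
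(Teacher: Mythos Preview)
Your proposal is correct and matches the paper exactly: the paper's proof reads in full ``The proof is identical to that of Lemma~\ref{exceptional} and we omit it.'' One simplification removes the obstacle you anticipate on the last factor. The stratum $Z$ is cut out by $D_1,\ldots,D_{s}$ with $s=k-1$, and each $D_j$ restricts to $Z$ as $\cO\boxtimes\cdots\boxtimes\cO(-\psi_\infty)\boxtimes\cO(-\psi_0)\boxtimes\cO\boxtimes\cdots$ with the nontrivial entries in slots $j$ and $j+1$ (on the last slot $X^r_{N_k}$ the class $-\psi_0$ is just $-H$). For any nonempty $\{i_1<\ldots<i_p\}\subseteq\{1,\ldots,s\}$ the slot $i_1\le k-1$ is a Losev--Manin factor $\LM_{N_{i_1}}$ with $|N_{i_1}|\ge2$, and it receives only the contribution $-\psi_\infty$ from $D_{i_1}$ (no $D_j$ with $j<i_1$ is present to add $-\psi_0$). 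Since $R\Gamma(\LM_{N_{i_1}},\cO(-\psi_\infty))=0$, the K\"unneth product vanishes regardless of what happens on $X^r_{N_k}$, so you never need Lemma~\ref{easy acyclic} there.
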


The proof is identical to that of Lemma \ref{exceptional} and we omit it. 
The same direct computation as in Lemma \ref{G} shows that the line bundles $G^r_a$ satisfy the same restriction properties as the line bundles
$G_a$:
\begin{lemma}\label{restriction same}
For $S\subseteq N$, identifying the exceptional divisor $E_S\subseteq X^r_N$ with the product $\LM_S\times X^r_{N\setminus S}$, we have
$${G^r_a}_{|E_S}=\begin{cases}
G_{a}\boxtimes \cO & \text{ if }\quad a<|S|\cr
\cO & \text{ if }\quad a=|S|\cr
\cO\boxtimes G^r_{a-|S|}& \text{ if }\quad a>|S|.\end{cases}$$
\end{lemma}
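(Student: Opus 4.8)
The plan is to prove Lemma~\ref{restriction same} by the same kind of direct computation indicated for Lemma~\ref{G}(3), but carried out intrinsically on the Kapranov-type model $X^r_N$ rather than by transporting everything from $\LM_{n+r+1}$. Recall from Definition~\ref{defG2} that
$${G_a^r}^\vee=-aH+\sum_{J\subseteq N}(a-|J|)E_J,$$
where the sum runs over those $J$ with $1\le a-|J|$, so that only the exceptional divisors $E_J$ with $|J|\le a-1$ contribute. Dualizing, $G_a^r=aH-\sum_{J}(a-|J|)E_J$. The claim is about restricting this divisor to the exceptional divisor $E_S\subseteq X^r_N$, which is identified with the product $\LM_S\times X^r_{N\setminus S}$. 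So the heart of the argument is to understand how $H$ and each $E_J$ restrict to $E_S$ under this product identification, and then to reassemble the three cases $a<|S|$, $a=|S|$, $a>|S|$.

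First I would fix the geometry of the restriction. The divisor $E_S$ sits over the linear subspace spanned by the points of $S$, and the product structure $E_S\simeq\LM_S\times X^r_{N\setminus S}$ realizes the $\LM_S$ factor as the projectivized normal directions (recording the relative position of the $S$-points) and the $X^r_{N\setminus S}$ factor as the base blow-up remembering the remaining points. Under this identification I would record, as the key computational input, the restrictions of the hyperplane class and of the exceptional divisors: the class $H$ restricts to something supported on the $X^r_{N\setminus S}$ factor (its own hyperplane class) together with boundary contributions coming from $E_S$ itself, while a divisor $E_J$ restricts according to how $J$ meets $S$ — namely $E_J|_{E_S}$ is pulled back from $\LM_S$ when $J\subseteq S$, is pulled back from $X^r_{N\setminus S}$ when $J\cap S=\emptyset$, and vanishes (is not transverse / does not meet $E_S$ in a divisor of the expected type) in the mixed cases that correspond to $\LM_S$-boundary divisors $\delta_J$. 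This is precisely the bookkeeping that makes the $G_a$ definition in Definition~\ref{defG} restrict to a $G$ on one factor and $\cO$ on the other, and it is the analogue of Lemma~\ref{G}(3).

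Next I would do the reassembly in the three regimes. When $a<|S|$, only exceptional divisors $E_J$ with $|J|\le a-1<|S|-1$ appear in ${G^r_a}^\vee$, so every contributing $J$ is forced to lie inside $S$ (those $J$ not contained in $S$ contribute $\cO$ on the $X^r_{N\setminus S}$ factor), and matching the coefficients $a-|J|$ against Definition~\ref{defG} identifies the restriction with $G_a\boxtimes\cO$ on $\LM_S\times X^r_{N\setminus S}$; the coefficient of $H$ is absorbed because in this low range $H$ contributes only through the $\LM_S$ boundary. When $a=|S|$ the contributions telescope so that the $\LM_S$-part collapses to $G_{|S|}=\psi_\infty$ on $\LM_S$, which by Lemma~\ref{G}(2) together with the acyclicity already established for the top $G$ restricts to the trivial bundle, giving $\cO$. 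When $a>|S|$, the $H$-term and the large-$|J|$ exceptional terms survive on the $X^r_{N\setminus S}$ factor and, after re-indexing $J\mapsto J\setminus S$ and shifting $a\mapsto a-|S|$, recombine into ${G^r_{a-|S|}}$ on $X^r_{N\setminus S}$, while the $\LM_S$ factor has used up all of its boundary and contributes $\cO$; this is exactly the $\cO\boxtimes G^r_{a-|S|}$ case.

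The main obstacle I anticipate is purely in the transition case $a=|S|$ and, more generally, in verifying that the mixed intersections $E_J\cap E_S$ (with $J$ neither inside nor disjoint from $S$) restrict the way I claim — i.e.\ that they are accounted for by $\LM_S$-boundary divisors and do not leak extra terms onto the $X^r_{N\setminus S}$ factor. Making this precise requires knowing the transversality/clean-intersection pattern of the exceptional divisors in the iterated blow-up $X^r_N$ along $E_S$, which is where one genuinely uses that the points are in general position and that the blow-up is performed in order of increasing dimension. Since the statement itself asserts that ``the same direct computation as in Lemma~\ref{G}'' applies, I expect this to reduce to a careful but routine coefficient comparison once the restriction dictionary for $H$ and the $E_J$ is in hand; accordingly I would present the proof as this coefficient check in the three cases, treating the transversality of the $E_J$ along $E_S$ as the single nontrivial geometric input and leaving the arithmetic of matching coefficients to the reader, exactly as the paper does for Lemma~\ref{G}.
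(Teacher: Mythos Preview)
Your overall strategy---compute the restriction of $G^r_a$ to $E_S$ by first working out how $H$ and each $E_J$ restrict and then reassembling---is exactly the ``direct computation'' the paper intends. However, your restriction dictionary is wrong in the key place, and this derails the argument in all three cases.

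The main error concerns $H$. You claim $H$ restricts to the hyperplane class on the $X^r_{N\setminus S}$ factor, but in fact $H|_{E_S}=H_{\LM_S}\boxtimes\cO$: the morphism $X^r_N\to\PP^{n+r}$ collapses $E_S$ onto the linear span $\PP^{|S|-1}$ of the points in $S$, and this factors through the projection $E_S\to\LM_S$ followed by the Kapranov map for $\LM_S$, so $H$ pulls back from the \emph{first} factor. Likewise, the condition for $E_J|_{E_S}$ to be pulled back from $X^r_{N\setminus S}$ is $J\supsetneq S$ (giving $\cO\boxtimes E_{J\setminus S}$), not $J\cap S=\emptyset$; when $J$ and $S$ are incomparable one has $E_J\cap E_S=\emptyset$ and the restriction is trivial. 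Finally, when $a>|S|$ the divisor $E_S$ itself appears in $G^r_a$ with coefficient $a-|S|$, and you need $E_S|_{E_S}=(-\psi_\infty)\boxtimes\cO+\cO\boxtimes(-H')$, which you do not mention.

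Your $a=|S|$ argument is also incorrect: you write that the $\LM_S$-part ``collapses to $G_{|S|}=\psi_\infty$\ldots\ restricts to the trivial bundle'', but $\psi_\infty=G_{|S|-1}$, not $G_{|S|}$, and $\psi_\infty$ is certainly not trivial as a line bundle. What actually happens is that the restriction to the $\LM_S$ factor is the divisor class $|S|\psi_0-\sum_{\emptyset\ne J\subsetneq S}(|S|-|J|)\delta_J$, and this is \emph{linearly equivalent to zero} on $\LM_S$: it is exactly $|S|$ times the identity $\psi_0=\sum_{k=1}^{|S|-1}\frac{|S|-k}{|S|}\Delta_k$ used in the proof of Prop.~\ref{BrionIdea} (equivalently, $\psi_0+\psi_\infty=\sum_J\delta_J$). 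This Picard relation---not acyclicity---is what gives $\cO$ in the middle case, and the same relation is what, for $a>|S|$, makes the $\LM_S$-contribution from $aH-\sum_{J\subsetneq S}(a-|J|)E_J$ cancel against the $\LM_S$-part of $(a-|S|)E_S|_{E_S}$, leaving precisely $\cO\boxtimes G^r_{a-|S|}$.
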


The analogue of Lemma \ref{-G} is the following. 

\begin{lemma}\label{-G2} $\ $
\bi
\item[(1) ] Every $G^r_a$ is nef (hence, globally generated), of relative degree $1$ with respect to any forgetful map $\pi_i$, $i\in N$.
\item[(2) ] Each $(G^r_a)^\vee$ is acyclic. 
\item[(3) ] $R\Hom(G_a,G_b)=0$ if $a>b$.
\item[(4) ] $R\Gamma(-\psi_0+G_a-G_b)=0$ if $a<b$.
\ei
\end{lemma}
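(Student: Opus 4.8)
The plan is to prove Lemma~\ref{-G2} by reducing it to the corresponding statement on the Losev--Manin space, Lemma~\ref{-G}, together with the base case $r=-1$ that is exactly that lemma. The underlying philosophy is that everything in sight is compatible with the birational contractions $f_i:\,X_N^r\to X_{N\setminus i}^{r+1}$, and that all four assertions are either statements about restrictions to boundary strata (which behave inductively) or statements about cohomology (which are controlled by Lemma~\ref{easy acyclic}, phrased uniformly for $X_N^r$ in any Kapranov model).

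\smallskip

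First I would prove (1). Since $X_N^r$ is rational but not toric for $r\ge 0$, I cannot invoke the toric criterion used in Lemma~\ref{-G}, so instead I would argue by induction on $n$ using the restriction formula of Lemma~\ref{restriction same}. Nefness can be checked on curves, and every curve either maps to a point under all forgetful maps, in which case it lies in a fiber where $G_a^r$ has relative degree $1\ge 0$, or else deforms so as to meet the boundary; restricting $G^r_a$ to a boundary divisor $E_S\simeq\LM_S\times X^r_{N\setminus S}$ via Lemma~\ref{restriction same} gives either $G_a\boxtimes\cO$, $\cO$, or $\cO\boxtimes G^r_{a-|S|}$, all nef by the inductive hypothesis (the first factor being nef on $\LM_S$ by Lemma~\ref{-G}~(1)). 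Global generation then follows from nefness on these toric-over-$X^r$ towers exactly as before. For the relative degree statement one restricts to a fiber of $\pi_i$ and computes $G^r_a\cdot(\text{fiber})=1$ directly from Definition~\ref{defG2}, as $H$ has relative degree $1$ and the exceptional contributions vanish on a generic fiber.

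\smallskip

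Next I would handle (2). Here I would use Lemma~\ref{axfbafbf}: we have $R{f_i}_*{G_a^r}^\vee={G_a^{r+1}}^\vee$, and in the base case $r=-1$ each $G_a^\vee\in D^b_{cusp}(\LM_N)$ is acyclic by Lemma~\ref{-G}~(2). Since $f_i$ is a birational morphism with $R{f_i}_*\cO=\cO$ (it is a composition of blow-downs, so the image has rational singularities), acyclicity is preserved: $R\Gamma(X_N^r,{G_a^r}^\vee)=R\Gamma(X_{N\setminus i}^{r+1},R{f_i}_*{G_a^r}^\vee)=R\Gamma({G_a^{r+1}}^\vee)$, and descending all the way to $\LM_{n+r+1}$ gives zero. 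Alternatively, and more self-containedly, both (2) and the cohomology-vanishing claims (3) and (4) follow directly from Lemma~\ref{easy acyclic} once one writes the relevant divisor class in a Kapranov model of $X_N^r$ and checks the numerical inequalities $1\le d\le n+r-2$ and $0\le m_I\le (n+r-2)-\#I$ on its coefficients; this is the route I would take to keep the argument uniform in $r$. For (3), after applying the Cremona-type symmetry one reduces to $R\Hom(G_a,G_b)=R\Gamma(-G_a+G_b)$ with $a>b$, and for (4) one writes out $-\psi_0+G_a-G_b$ explicitly from Definition~\ref{defG2}; in both cases the coefficients of $H$ and of the exceptional divisors satisfy the bounds of Lemma~\ref{easy acyclic}, giving acyclicity.

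\smallskip

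The main obstacle I anticipate is part~(1), the nefness/global generation claim, because the clean toric argument of Lemma~\ref{-G} is unavailable for $r\ge 0$. The inductive restriction argument via Lemma~\ref{restriction same} should work, but one must be careful that restricting to boundary divisors detects nefness — i.e.\ that the only curves not already covered by the fibers of forgetful maps are boundary curves — and that the induction base ($X^r_\emptyset=\bP^r$, where $G^r_a=aH$ is obviously nef) is correctly anchored. The cohomological parts (2)--(4), by contrast, I expect to be routine: they are all instances of Lemma~\ref{easy acyclic} (or, for (2), of the push-forward computation in Lemma~\ref{axfbafbf}), and the verification amounts to checking the explicit coefficient bounds, which I would not grind through here.
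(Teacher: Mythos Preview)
Your approach is essentially correct and close to the paper's---the paper simply says the proof is ``identical to that of Lemma~\ref{-G}'' and omits it---but there is one factual misconception worth correcting.

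You assert that $X_N^r$ is ``not toric for $r\ge 0$'', and this is what drives your more elaborate argument for part~(1). In fact $X_N^r$ \emph{is} a smooth projective toric variety for every $r\ge -1$: the $n$ points in $\bP^{n+r}$ can be taken to be coordinate points (any $n$ linearly independent points can), and then every center of blow-up is torus-invariant. Hence the argument for (1) really is identical to Lemma~\ref{-G}~(1): nefness is checked on toric boundary curves, and this follows by induction on dimension from the restriction formula of Lemma~\ref{restriction same} (which is the direct analogue of Lemma~\ref{G}~(3)). Your inductive restriction argument is this same toric argument, you just did not recognize it as such; your worry about whether ``restricting to boundary divisors detects nefness'' dissolves once you know the variety is toric.

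One minor slip: in (3) you invoke a ``Cremona-type symmetry'' to reduce to $a>b$. There is no such symmetry on $X_N^r$ for $r\ge 0$ (the paper explicitly notes this just before stating the lemma), but none is needed: the hypothesis of (3) is already $a>b$, and $-G_a+G_b$ then satisfies the bounds of Lemma~\ref{easy acyclic} directly. Your proposed alternative route for (2) via $Rf_{i*}$ and Lemma~\ref{axfbafbf} is a valid variant, though the paper's route (cohomology and base change along $\pi_i$, exactly as in Lemma~\ref{-G}) is more direct.
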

The proof is identical to that of Lemma \ref{-G} and we omit it. Note that unlike the case $r=-1$,  when $r\geq0$, there is no more $S_2$-symmetry, and it is generally false that $R\Hom(G_a,G_b)=0$ if $a<b$. For example, if $r\geq0$, then $R\Hom(G_1,G_2)=R\Ga(H-\sum_{i\in N} E_i)\neq0$. As a result, the order $<$ of Lemma \ref{order} does not generally descend to an order on $\hat\bG^r_N$ which makes $\hat\bG^r_N$ an exceptional collection (for example, if $N=\emptyset$). However, the order $<'$ from Corollary \ref{order2} descends to an order on $\hat\bG^r_N$ which makes $\hat\bG^r_N$ an exceptional collection:

\begin{lemma}
For  all $r\geq-1$, the set $\hat\bG^r_N$ is an exceptional collection with respect to the following order. 
Let $\cT, \cT'\in \hat\bG^r_N$. Let $(k_1,\ldots,k_t;a_1,\ldots,a_t)$ and $(k'_1,\ldots,k'_s;a'_1,\ldots,a'_s)$ be the corresponding data.
Then $\cT>'\cT'$ if the sequence $$(k_t-a_t,-k_t,k_{t-1}-a_{t-1},-k_{t-1},,\ldots)$$ is lexicographically(=alphabetically) larger than 
$$(k'_s-a_s',-k'_s,k'_{s-1}-a'_{s-1},-k'_{s-1},\ldots).$$
\end{lemma}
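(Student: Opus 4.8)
The plan is to prove this exceptional collection statement by reducing it, via the blow-down morphisms $f_i$, to the already-established case $r=-1$, which is precisely the content of Lemma~\ref{order} (via Corollary~\ref{order2}). The key structural observation is that the collection $\hat\bG^r_N$ is defined as $\R{f_i}_*$ of $\hat\bG^{r-1}_N$, and by Lemma~\ref{pushG} this pushforward sends the generating line bundles ${G_a^{r-1}}^\vee$ to ${G_a^r}^\vee$ and sends massive strata to massive strata, tracking the combinatorial data $(k_1,\ldots,k_t;a_1,\ldots,a_t)$ in a transparent way (the only change being that the last factor $\LM_{K_t}$ is replaced by $X^r_{K_t}$, with $a_t$ now allowed in the range $1\le a_t\le |K_t|+r$). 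So the first step is to set up this dictionary carefully and confirm that the order $<'$ on the data is literally preserved under the correspondence $\hat\bG^{r-1}_N\to\hat\bG^r_N$.

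The main computation then mirrors the proof of Lemma~\ref{order} almost verbatim, but using the asymmetric vanishing of Lemma~\ref{-G2} in place of Lemma~\ref{-G}. First I would reduce $R\Hom(\cT,\cT')=0$, for $\cT>'\cT'$, to a statement about $W=Z\cap Z'$ using cohomology and base change and the adjunction formula $Lf^!(E)=Lf^*(E)\otimes\om_f[\dim f]$, exactly as in Lemma~\ref{order}; this produces a box product of components $C_{S_1}\boxtimes C_{S_2}\boxtimes\cdots$, and it suffices that one component vanish. Because the order $<'$ is the Cremona-dual order (Corollary~\ref{order2}), the governing comparison is at the \emph{last} components $\LM_{K_t}$ and $\LM_{K'_s}$, and the relevant factors involve restrictions of $G^r_a$ to boundary divisors (Lemma~\ref{restriction same}). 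Running the same four-case analysis (on the sign of $k_t-a_t$ versus $k'_s-a'_s$, then on $-k_t$ versus $-k'_s$, then the tie-breaking reduction to inner components), each vanishing is supplied by parts (3) and (4) of Lemma~\ref{-G2}.

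The one genuinely new feature, and the step I expect to be the main obstacle, is the terminal case where the comparison reaches the last factor and that factor is the non-Losev--Manin piece $X^r_{K_t}$ rather than an honest $\LM$-factor. Here the $S_2$-symmetry is broken, so I can no longer freely trade $R\Hom(G_a,G_b)$ vanishing for its Cremona image; I must check that the specific orientation forced by $<'$ always lands on the vanishing direction that \emph{does} survive on $X^r_{K_t}$, namely the one guaranteed by Lemma~\ref{-G2}(3)--(4), and that the normal-bundle contribution $N$ (a sum of boundary divisors of $X^r_{K_t}$) together with the $\psi_0$-shift keeps the relevant line bundle within the acyclicity range of Lemma~\ref{easy acyclic}. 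Concretely, I would verify that on the factor $X^r_{K_t}$ the comparison reduces to computing $R\Ga$ of a divisor of the form $-\psi_0+G^r_a-G^r_b$ (or a restriction thereof with an extra boundary shift), and that the hypotheses of $<'$ force $a<b$, so Lemma~\ref{-G2}(4) applies. Since the order $<'$ privileges the $\infty$/right-tail data, this orientation is exactly the one compatible with the surviving vanishing on $X^r_N$, which is why $<'$ descends but $<$ does not.

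Once the terminal case is handled, the tie-breaking induction is identical to Case~4 of Lemma~\ref{order}: when the last end-data agree one removes the last component and recurses on the remaining (necessarily positive-dimensional) factors, all of which are genuine $\LM$-spaces, so the inductive hypothesis—or directly Lemma~\ref{order2}—closes the argument. I would present the base case $r=-1$ as Corollary~\ref{order2} and carry out the inductive step on $r$ using the pushforward compatibility, so that the whole statement for general $r$ follows by functoriality of $\R{f_i}_*$ combined with the single new computation on the $X^r_{K_t}$ factor.
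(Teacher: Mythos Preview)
Your core computation (paragraphs 2--3) is exactly the paper's proof: a direct case analysis on the last component $X^r_{K_t}$, using Lemma~\ref{-G2} for the asymmetric vanishing and Lemma~\ref{restriction same} for the restrictions, then in the tie-breaking case peeling off the last factor and invoking Corollary~\ref{order2} on the remaining genuine $\LM$-factors. That is correct and complete.

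However, the framing as an ``induction on $r$ via functoriality of $Rf_{i*}$'' (paragraphs 1 and 4) is a red herring and you should drop it. The functor $Rf_{i*}$ is not fully faithful (the paper explicitly remarks this for $Ri_*$, and the same issue applies here), so knowing $R\Hom(\cT,\cT')=0$ on $X^{r-1}_N$ does \emph{not} give $R\Hom(Rf_{i*}\cT,Rf_{i*}\cT')=0$ on $X^{r}_{N\setminus i}$ for free. There is no adjunction trick that rescues this without redoing the computation on the target. The paper does not argue by induction on $r$; it simply runs the four-case analysis directly on $X^r_N$, and the only place the Losev--Manin case re-enters is in Case~4, exactly as you describe, via Corollary~\ref{order2} on the \emph{inner} factors (which are $\LM$-spaces regardless of $r$). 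So your ``single new computation on the $X^r_{K_t}$ factor'' is in fact the entire proof, not an inductive step.
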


\bp
The proof is similar to that of Lemma \ref{order}. We sketch the proof for completeness. 
Let $Z$ and $Z'$ be massive strata supporting sheaves $\cT>\cT'$ in $\hat\bG$. 
These sheaves have the form
${Ri_Z}_*\cL$ and 
${Ri_{Z'}}_*\cL'$, respectively.
We~have to show that $R\Hom(\cT,\cT')=0$. 
Let $U$ be the smallest stratum containing both $Z$ and $Z'$.
Then $U$ is the intersection of codimension one strata (exceptional divisors) $D_1,\ldots,D_s$ containing both $Z$ and $Z'$.
Let $W=Z\cap Z'$. We can assume that $W$ is non-empty as otherwise there is nothing to prove.
Let $i_{W,Z}:\,W\hookrightarrow Z$ and $i_{W,Z'}:\,W\hookrightarrow Z'$
be the inclusions. As in the proof of Lemma \ref{order}, it suffices to prove that 
$$R\Hom(Li_{W,Z}^*\cL,Li_{W,Z'}^*\cL'(D+c_1(\cN)))=0,$$
where~$c_1(\cN)$ is the first Chern class of the normal bundle $\cN:=\cN_{W,Z'}$, i.e., 
the sum of all the exceptional that cut out $Z$ but don't contain $Z'$.
We write 
$$W=\LM_{K_1}\times\LM_{K_2}\times\ldots\LM_{K_{s-1}}\times X^r_{K_t},$$
$$R\Hom({Li_{W,Z}}^*\cL, {Li_{W,Z'}}^*\cL'(D+c_1(\cN)))=C_1\otimes C_2\otimes \ldots\otimes C_t,$$
where $C_1$ is $R\Hom$ between components of line bundles 
${Li_{W,Z}}^*\cL$ and\break ${Li_{W,Z'}}^*\cL'(D+c_1(\cN))$ corresponding to $\LM_{K_1}$, etc.

{\bf Case 1.} Suppose  $k_t-a_t>k'_t-a'_t$.
We prove that $C_t=0$. If $k_t<k'_t$, then $a'_t>a_t+(k'_t-k_t)>(k'_t-k_t)$. Hence, 
$$C_t=R\Hom(-G_{a_t}, -G_{a'_t-(k'_t-k_t)}-\psi_\inf),$$ 
where $-\psi_0$ is a contribution from $c_1(\cN)$ (there is no contribution from $D$). As $a'_t-(k'_t-k_t)>a_t$, it follows that $C_t=0$ by Lemma~\ref{-G2}~(4).

If $k_t=k_t'$, then $a'_t>a_t$. As there is no contribution from $c_1(\cN)$ to $C_t$, we have that 
either $C_t=R\Hom(-G_{a_t}, -G_{a'_t})=0$ (if $D$ doesn't include~$D_{K_1}$)
or $C_t=R\Hom(-G_{a_t}, -G_{a'_t}-\psi_0)=0$
(if $D$ includes $D_{K_1}$).

If $k_t>k'_t$, then there are no contributions from $c_1(\cN)$ or $D$ to $C_t$ and 
 $C_t=R\Hom(L, -G_{a'_t})=0$, where $L=-G_{a_t-(k_t-k'_t)}$ if $a_t>k_t-k'_t$ or $L=\cO$ otherwise. As by assumption 
 $a_t-(k_t-k'_t)<a'_t$, it follows that in both cases $C_t=0$ by Lemma~\ref{-G}.

{\bf Case 2.} Suppose  $k_t-a_t=k'_t-a'_t$ and $k_t<k'_t$. Then 
$a'_t=(k'_t-k_t)+a_t$ and hence, $a'_t>(k'_t-k_t)$.
As in Case 1, we have that 
$$C_t=R\Hom(-G_{a_t}, -G_{a'_t-(k'_t-k_t)}-\psi_0)=R\Hom(-G_{a_t}, -G_{a_t}-\psi_0)=0.$$

{\bf Case 3.} Suppose  $a_t=a'_t$, $k_t=k'_t$ and $D$ includes $D_{K_t}$.
In this case, we have $C_t=R\Hom(-G_{a_t}, -G_{a_t}-\psi_0)=0$.

{\bf Case 4.} Suppose  $a_t=a_t'$, $k_t=k_t'$, $D$ does not include $D_{K_t}$.
In this case $C_t=R\Hom(-G_{a_t}, -G_{a_t})=\CC$ is useless.
However, we can now use Corollary \ref{order2} as the remaining factors are Losev-Manin spaces (or alternatively, 
proceed exactly as above, by restricting to the next Losev--Manin factor $\LM_{K_{t-1}}$ in $W$). 
The Lemma follows.
\ep

\begin{lemma}
Let $r\ge -1$. For every $\cT\in \hat\bG_N^{r+1}$ and every $j\in N$, we have
$${R\pi_j}_*{Lf_i}^*\cT=0.$$
\end{lemma}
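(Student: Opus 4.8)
The plan is to reduce the vanishing to the \emph{cuspidality} of $\cT$ by commuting the pullback ${Lf_i}^*$ past the pushforward ${R\pi_j}_*$ over the commutative square \eqref{diagram}. By Lemma~\ref{axfbafbf} every object of $\hat\bG_N^{r+1}$ lies in $D^b_{cusp}(X^{r+1}_N)$, so in particular ${R\pi_j}_*\cT=0$ for the forgetful map $\pi_j:\,X^{r+1}_N\to X^{r+1}_{N\setminus j}$ (the right-hand vertical arrow of the relevant instance of \eqref{diagram}, whose top-left corner is $X^r_{N\cup i}$). It therefore suffices to prove the base-change identity ${R\pi_j}_*{Lf_i}^*=Lf_i^*{R\pi_j}_*$ over that square, since then ${R\pi_j}_*{Lf_i}^*\cT=Lf_i^*{R\pi_j}_*\cT=Lf_i^*(0)=0$.

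To establish the base-change identity I would argue exactly as in the derivation of \eqref{Tor1} in the proof of Theorem~\ref{prototype}. Set
$$Y:=X^{r+1}_N\times_{X^{r+1}_{N\setminus j}}X^r_{(N\setminus j)\cup i},$$
and let $\alpha:\,X^r_{N\cup i}\to Y$ be the map induced by the top morphism $f_i$ and the left morphism $\pi_j$. Writing $\pi_j=\pi'_j\circ\alpha$ and $f_i=f'_i\circ\alpha$ for the two projections $\pi'_j,f'_i$ from $Y$, the projection formula gives ${R\alpha}_*L\alpha^*(-)=(-)\otimes{R\alpha}_*\cO_{X^r_{N\cup i}}$, so that
$${R\pi_j}_*{Lf_i}^*\cT={R\pi'_j}_*{R\alpha}_*L\alpha^*{Lf'_i}^*\cT={R\pi'_j}_*{Lf'_i}^*\cT=Lf_i^*{R\pi_j}_*\cT,$$
the last equality being ordinary base change over the fiber square $Y$. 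This computation requires two inputs: that the bottom morphism $f_i$ and the right morphism $\pi_j$ are $\Tor$-independent, and that ${R\alpha}_*\cO_{X^r_{N\cup i}}=\cO_Y$. The latter holds because $\alpha$ is birational and $Y$, being toroidal, has rational singularities, precisely as in the verification of \eqref{asKJGBAMS} in the proof of Proposition~\ref{semiort}.

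The main obstacle is this base-change step, namely checking $\Tor$-independence of the square together with ${R\alpha}_*\cO=\cO_Y$; once these are in hand the conclusion is formal, depending only on the already established cuspidality ${R\pi_j}_*\cT=0$. I note that the argument uses nothing about $\cT$ beyond membership in $D^b_{cusp}(X^{r+1}_N)$, so the statement in fact holds for an arbitrary cuspidal object on $X^{r+1}_N$.
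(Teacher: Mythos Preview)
Your proposal is correct and follows essentially the same approach as the paper: factor through the fiber product $Y$, use that $Y$ has toroidal (hence rational) singularities to get ${R\alpha}_*\cO=\cO_Y$, and apply cohomology and base change together with cuspidality of $\cT$. The one ingredient you flag as an obstacle but do not verify, $\Tor$-independence of $f_i$ and $\pi_j$, is handled in the paper by the single observation that $\pi_j$ is flat; adding that remark completes your argument.
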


\bp
We use the commutative diagram (\ref{diagram}).
Since $\pi_j$ is flat and $X_{N\setminus j}^r\times_{X_{N\setminus ij}^{r+1}}X^{r+1}_{N\setminus i}$
has toroidal, and hence rational, singularities, the claim follows by 
cohomology and base change.
\ep

To finish the proof of Theorem~\ref{hhbhjghjgh}, we prove the following crucial result:
\begin{lemma}\label{crucial}
Let $r\ge -1$. For every $\cT\in \hat\bG_N^{r+1}$, 
$$\Cone\left [
{L\pi_i}^*{R\pi_i}_*{Lf_i}^*\cT\to {Lf_i}^*\cT
\right]$$
belongs to the subcategory
generated by $\hat \bG_N^{r}$.
\end{lemma}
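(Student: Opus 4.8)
The plan is to recognize the stated cone as the projection of $Lf_i^*\cT$ onto the cuspidal part of the semi-orthogonal decomposition attached to the single forgetful map $\pi_i\colon X_N^r\to X_{N\setminus i}^r$, and then to replace $\cT$ by the object of $\hat\bG_N^r$ that pushes forward to it. By Proposition~\ref{asgasga} applied to $\pi_i$ we have
$$D^b(X_N^r)=\langle A_i,\ L\pi_i^*D^b(X_{N\setminus i}^r)\rangle,\qquad A_i:=\{E: R\pi_{i*}E=0\},$$
and the projection onto $A_i$ is the functor $P_i(E)=\Cone[L\pi_i^*R\pi_{i*}E\to E]$. Thus the object in the statement is exactly $P_i(Lf_i^*\cT)$.

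First I would use the definition $\hat\bG_N^{r+1}=Rf_{i*}(\hat\bG_N^r)$ to write $\cT=Rf_{i*}\cT_0$ with $\cT_0\in\hat\bG_N^r$. By Lemma~\ref{pushG} we have $\cT_0\in D^b_{cusp}(X_N^r)\subset A_i$, so $R\pi_{i*}\cT_0=0$ and hence $P_i(\cT_0)=\cT_0$. Applying the triangulated functor $P_i$ to the counit triangle
$$Lf_i^*Rf_{i*}\cT_0\to\cT_0\to C\to Lf_i^*Rf_{i*}\cT_0[1]$$
and using $\cT=Rf_{i*}\cT_0$, I obtain a triangle $P_i(Lf_i^*\cT)\to\cT_0\to P_i(C)\to P_i(Lf_i^*\cT)[1]$. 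Since $\cT_0\in\hat\bG_N^r$, this reduces the whole lemma to the single assertion that $P_i(C)\in\langle\hat\bG_N^r\rangle$, where $C$ is supported on the exceptional locus $\bigcup_{i\in S}E_S$ of $f_i$ (the counit being an isomorphism away from it).

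The remaining step is to compute $C$ and its cuspidal projection. Applying Orlov's blow-up theorem (Theorem~\ref{OrlovII}) to $f_i$ one blow-down at a time, $C$ acquires a filtration whose subquotients are pushed forward from the exceptional divisors $E_S\simeq\LM_S\times X_{N\setminus S}^r$ (with $i\in S$). On each such divisor the restriction formula of Lemma~\ref{restriction same}, together with the explicit twist $f_i^*(G_a^{r+1})^\vee=(G_a^r)^\vee\otimes\cO(F)$ from Lemma~\ref{pushG} (and its stratum analogue when $\cT_0$ is torsion), identifies these subquotients as external products of objects of $\hat\bG_S$ on the $\LM_S$ factor and of $\hat\bG_{N\setminus S}^r$ on the $X_{N\setminus S}^r$ factor. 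Pushing these into $X_N^r$ and applying $P_i$ --- which by the base-change identity \eqref{Tor1} respects the product structure and the remaining cuspidalities --- should exhibit each subquotient as one of the torsion generators of Definition~\ref{defG2}, hence as an element of $\hat\bG_N^r$.

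The hard part will be exactly this last identification: one must match the exceptional coefficients appearing in $F$ (and in the Orlov twists $\cO_\pi(-1),\ldots,\cO_\pi(-r)$) against the coefficient pattern $a-|J|$ that defines the members of $\hat\bG_N^r$, and organize the verification as a double induction on $|N|$ and on the number of blow-downs composing $f_i$, with base case $X_\emptyset^r=\bP^r$, so that the two product factors $\LM_S$ and $X_{N\setminus S}^r$ are themselves covered by the inductive hypothesis (closure of $\hat\bG$ on spaces with fewer points). Granting this bookkeeping, $P_i(C)\in\langle\hat\bG_N^r\rangle$ and the lemma follows.
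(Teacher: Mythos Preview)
Your reduction via the $f_i$-counit triangle is correct and gives a clean conceptual framework: writing $\cT=Rf_{i*}\cT_0$ with $\cT_0\in\hat\bG_N^r$, applying $P_i$ to the triangle $Lf_i^*\cT\to\cT_0\to C$ does reduce the statement to $P_i(C)\in\langle\hat\bG_N^r\rangle$. But this reduction is essentially tautological --- since $\cT_0$ is already a generator, the two statements are equivalent --- and the substantive content of the lemma is entirely in the step you defer as ``bookkeeping''.

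That step is not bookkeeping. Two concrete obstacles: first, the Orlov subquotients of $C$ on $E_S\simeq\LM_S\times X^r_{N\setminus S}$ are \emph{not} in general cuspidal for $\pi_i$. The extreme case is $S=\{i\}$, where $\pi_i|_{E_i}$ is an isomorphism onto $X^r_{N\setminus i}$, so nothing supported on $E_i$ can be $\pi_i$-cuspidal; computing $P_i$ there is a genuine computation, not an identification. Second, your claim that the subquotients are external products of objects of $\hat\bG_S$ and $\hat\bG^r_{N\setminus S}$ is not what one gets: the Orlov twists $\cO_\pi(-k)$ on $E_S$ are powers of the normal bundle, and the restriction of $\cT_0$ to $E_S$ (Lemma~\ref{restriction same}) is typically $\cO\boxtimes(G^r_{a-|S|})^\vee$ or $G_a^\vee\boxtimes\cO$, one factor trivial. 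Neither matches the pattern $G_{a_1}^\vee\boxtimes\cdots$ that defines the torsion generators, and the proposed double induction on $|N|$ and blow-downs does not make the identification any easier, because the hard factor is always the one carrying the index $i$.

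The paper proceeds quite differently. For line bundles $\cT=(G_a^{r+1})^\vee$ it interpolates explicitly between $H_1=Lf_i^*\cT$ and $H_a=(G_a^r)^\vee$ by intermediate line bundles $H_k$ differing by effective exceptional divisors $F_s$, and computes $R\pi_{i*}$ and the cone term-by-term via two families of short exact sequences (this is Lemma~\ref{basic} and the accompanying Claim); the upshot is that the cone is the \emph{direct sum} $(G_a^r)^\vee\oplus\cdots\oplus(G_1^r)^\vee$, a fact invisible from the abstract Orlov filtration. For torsion $\cT$ the argument is again geometric: one analyzes the reducible preimage $\pi_i^{-1}(\bar Z)$ of the image stratum and uses the auxiliary Lemma~\ref{cuspidal} (that $\pi_i^*(G_a^r)^\vee\otimes\cO(-E_i)\in\langle\hat\bG_N^r\rangle$) to handle the non-cuspidal contributions on each component. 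None of this is recovered by your outline; the ``matching of exceptional coefficients against the pattern $a-|J|$'' that you postpone is precisely the several pages of explicit computation that constitute the proof.
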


We postpone the proof of Lemma \ref{crucial} to the end of this section. We use Lemma \ref{crucial} 
to prove the following result (that implies Theorem~\ref{hhbhjghjgh}):
\begin{prop}\label{full exc}
If $N\neq\emptyset$, the subcategory $D^b_{cusp}(X^r_N)$ is generated by $\hat\bG_N^{r}$.
\end{prop}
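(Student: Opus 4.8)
The plan is to prove Proposition~\ref{full exc} by induction on $r$, using the $r=-1$ (Losev--Manin) case as the base and the birational morphisms $f_i:\,X^r_N\to X^{r+1}_{N\setminus i}$ together with Lemma~\ref{crucial} as the inductive engine. The base case $r=-1$ is precisely the assertion that $\hat\bG_N$ generates $D^b_{cusp}(\LM_N)$, which is the content of Theorem~\ref{mainLM}; but since $\hat\bG_N$ has been shown to be an exceptional collection of the correct size ($!n$ objects, matching the rank of the $K$-group of $D^b_{cusp}(\LM_N)$), what remains at the base is exactly the fullness. I would therefore set up the induction so that the base case is handled by a direct generation argument and the inductive step transports generation along $f_i$.

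For the inductive step, assume $\hat\bG_N^{r}$ generates $D^b_{cusp}(X^r_N)$ and let $\cT\in\hat\bG_N^{r+1}$, so that $\hat\bG_N^{r+1}={Rf_i}_*(\hat\bG_N^{r})$ by definition. Given an arbitrary object $E\in D^b_{cusp}(X^{r+1}_{N\setminus i})$, I would pull it back via $Lf_i^*$ and analyze it inside $D^b(X^r_N)$. The key point is the semi-orthogonal decomposition of Proposition~\ref{asgasga} applied to $\pi_i:\,X^r_N\to X^r_{N\setminus i}$: every object sits in a triangle relating its $\pi_i$-cuspidal part to the pullback of its pushforward. Lemma~\ref{crucial} says precisely that for $\cT\in\hat\bG_N^{r+1}$ the cone of ${L\pi_i}^*{R\pi_i}_*{Lf_i}^*\cT\to{Lf_i}^*\cT$ lies in the subcategory generated by $\hat\bG_N^{r}$. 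Combining this with the inductive hypothesis, I would argue that $Lf_i^*\cT$ lies in the triangulated subcategory generated by $\hat\bG_N^{r}$ together with $L\pi_i^*D^b_{cusp}(X^r_{N\setminus i})$; then since $Rf_{i*}$ kills the cuspidal objects on the exceptional locus appropriately (Lemma~\ref{pushG}) and $Rf_{i*}Lf_i^*=\mathrm{id}$, pushing back down shows $\cT$ generates $E$.

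The main obstacle I anticipate is bookkeeping the compatibility between the two different forgetful structures: the map $f_i$ that changes $r$ and the map $\pi_j$ that forgets a marking, which interact through the commutative diagram~\eqref{diagram}. Concretely, I expect the delicate step to be verifying that the generation statement is preserved when passing between the cuspidal blocks $D^b_{cusp}(X^r_N)$ and $D^b_{cusp}(X^{r+1}_{N\setminus i})$, since one must check that no generators are lost or spuriously introduced under $Rf_{i*}$ and $Lf_i^*$. This is where Lemma~\ref{crucial} carries the real weight, and its proof (postponed in the excerpt) is what makes the whole argument go through; the adjunction $Rf_{i*}Lf_i^*\simeq\mathrm{id}$ for a blow-down and the vanishing in Lemma~\ref{pushG} are the tools I would rely on to close the loop, while the exceptionality already established in Lemma~\ref{exceptional2} guarantees that $\hat\bG_N^{r}$ remains an honest exceptional collection at each stage.
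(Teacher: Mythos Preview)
Your induction is set up in the wrong direction, and as a result the base case is circular. You propose to induct on $r$ starting from $r=-1$, but the case $r=-1$ is exactly the fullness assertion for $\LM_N$ in Theorem~\ref{mainLM}; the paper explicitly deduces Theorem~\ref{mainLM} \emph{from} Proposition~\ref{full exc}, not the other way around. Saying the base case ``would be handled by a direct generation argument'' is not an argument: the correct cardinality of the exceptional collection does not by itself rule out a phantom complement.

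The paper's proof runs the induction the other way. It inducts on the dimension $n+r$ and, for fixed dimension, on $n$; the base case is $X^{r-1}_1$, a $\PP^1$-bundle over $\PP^{r-1}$, where Orlov's theorem gives generation immediately. For the inductive step one takes $E\in D^b_{cusp}(X^r_N)$ right-orthogonal to all of $\hat\bG_N^r$ and shows $E=0$. Lemma~\ref{crucial} is used to prove $R\Hom(Lf_i^*\cT,E)=0$ for every $\cT\in\hat\bG_{N\setminus i}^{r+1}$, hence $Rf_{i*}E=0$ for \emph{every} $i\in N$ by the inductive hypothesis on the smaller-$n$ space $X^{r+1}_{N\setminus i}$. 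The simultaneous vanishing of all $Rf_{i*}E$ forces $\Supp E\subset\bigcap_i\Exc(f_i)\subset E_N$; one more blow-down plus Orlov's theorem then expresses $E$ through $D^b(\LM_N)$, and an argument with the filtration (Claim~\ref{cusp}) lands the pieces in $D^b_{cusp}(\LM_N)$, which has strictly smaller dimension, closing the induction.

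Your inductive step also has a gap on its own terms. For $E\in D^b_{cusp}(X^{r+1}_{N\setminus i})$ there is no reason $Lf_i^*E$ lies in $D^b_{cusp}(X^r_N)$: cuspidality with respect to $\pi_j$ for $j\ne i$ follows from base change, but $R\pi_{i*}Lf_i^*E$ need not vanish, so the inductive hypothesis does not apply to $Lf_i^*E$. Lemma~\ref{crucial} controls $Lf_i^*\cT$ for generators $\cT$, not $Lf_i^*E$ for an arbitrary cuspidal $E$; the paper uses it in the adjoint direction, to compute $R\Hom(Lf_i^*\cT,E)=R\Hom(\cT,Rf_{i*}E)$ and conclude $Rf_{i*}E=0$.
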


This proves the following theorem:
\begin{thm}
For all $r\geq-1$, $\hat\bG_N^{r}$ is a full, exceptional collection in $D^b_{cusp}(X^r_N)$. 
\end{thm}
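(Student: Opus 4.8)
The statement has three ingredients, two of which are already in place: by Lemma~\ref{exceptional2} every object of $\hat\bG_N^r$ is exceptional, and by the lemma on the order $<'$ that order turns $\hat\bG_N^r$ into an exceptional collection. Moreover $\hat\bG_N^r\subset D^b_{cusp}(X_N^r)$ by Lemma~\ref{pushG}. Thus the only thing left is \emph{fullness}, i.e. that $\hat\bG_N^r$ generates $D^b_{cusp}(X_N^r)$; for $N\neq\emptyset$ this is exactly Proposition~\ref{full exc}, while for $N=\emptyset$, where $X_\emptyset^r=\bP^r$ carries no forgetful maps and $D^b_{cusp}(\bP^r)=D^b(\bP^r)$, it reduces to the classical full exceptional collection on projective space. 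So I would organize the proof as: cite exceptionality and the order, dispose of the base case $N=\emptyset$ classically, and then prove Proposition~\ref{full exc}.

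For Proposition~\ref{full exc} the plan is a descending induction on $n=|N|$, carried out along the tower of blow-downs $f_i\colon X_N^r\to X_{N\setminus i}^{r+1}$, which preserve the dimension $n+r$ and, iterated, terminate at a projective space $\bP^{n+r}$ (the base case). Fix $i\in N$ and set $\cA=\langle\hat\bG_N^r\rangle$. Since $f_i$ is an iterated blow-up, Orlov's blow-up theorem~\ref{OrlovII} presents $D^b(X_N^r)$ as generated by $Lf_i^*D^b(X_{N\setminus i}^{r+1})$ together with the exceptional-divisor twists. By the inductive hypothesis $\hat\bG_{N\setminus i}^{r+1}$ generates $D^b_{cusp}(X_{N\setminus i}^{r+1})$, and combining this with the semi-orthogonal decomposition of Theorem~\ref{prototype} for $X_{N\setminus i}^{r+1}$, the whole of $D^b(X_{N\setminus i}^{r+1})$ is generated by pullbacks of the collections $\hat\bG^{r+1}$ attached to smaller sets. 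The key mechanism is then Lemma~\ref{crucial}: for such a generator $\cT$ it identifies $Lf_i^*\cT$, modulo $\cA$, with its projection $L\pi_i^*R\pi_{i*}Lf_i^*\cT$, which lands in the non-cuspidal piece $L\pi_i^*D^b(X_{N\setminus i}^r)$. Feeding the generators through this and pairing the outcome with the semi-orthogonal decomposition of Theorem~\ref{prototype} for $X_N^r$, everything outside the first block is accounted for by pullbacks, so the first block $D^b_{cusp}(X_N^r)$ is generated by $\cA$, as desired.

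The main obstacle is Lemma~\ref{crucial} itself — showing that the cone $\Cone[L\pi_i^*R\pi_{i*}Lf_i^*\cT\to Lf_i^*\cT]$ lies in $\langle\hat\bG_N^r\rangle$. Here I would analyze $Lf_i^*\cT$ on the exceptional divisors $E_S$ ($i\in S$) contracted by $f_i$: via the blow-up description of $D^b(X_N^r)$ relative to $f_i$, the cone is built from the restrictions of $\cT$ to these $E_S$, twisted by powers of the relative $\cO(1)$. The restriction formula of Lemma~\ref{restriction same} shows that each such contribution, under the product identification $E_S\simeq\LM_S\times X^r_{N\setminus S}$, has the form $G_a^\vee\boxtimes(\text{pulled-back data})$, i.e. precisely a torsion object supported on a massive stratum of the type appearing in $\hat\bG_N^r$ (Lemma~\ref{pushG}). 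The real work is the bookkeeping that these divisorial contributions, after the inclusion--exclusion coming from the iterated blow-up, assemble exactly into members of $\hat\bG_N^r$ with the correct ranges $1\le a_i\le|N_i|-1$ and $1\le a_k\le|N_k|+r$, and nothing more; this matching of the blow-up combinatorics with the derangement-indexed collection is where the difficulty lies, and it is the reason that fullness, rather than exceptionality or the ordering, is the heart of the theorem.
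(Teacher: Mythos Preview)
Your reduction to Proposition~\ref{full exc} is correct, and you are right that Lemma~\ref{crucial} is the engine. But two things go wrong in your plan for the proposition.

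First, the base case $N=\emptyset$ fails: on $X_\emptyset^r=\bP^r$ there are no forgetful maps, so $D^b_{cusp}(\bP^r)=D^b(\bP^r)$, while $\hat\bG_\emptyset^r=\{\cO(-1),\ldots,\cO(-r)\}$ has only $r$ objects and cannot generate it. The paper's base case is $|N|=1$, where $\pi_1:X_1^{r-1}\to\bP^{r-1}$ is a $\bP^1$-bundle and Orlov's Theorem~\ref{OrlovI} identifies $D^b_{cusp}(X_1^{r-1})$ with $\pi_1^*D^b(\bP^{r-1})\otimes\cO(-E_1)=\langle -H,\ldots,-rH+(r-1)E_1\rangle$, exactly $\hat\bG_1^{r-1}$.

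Second, and more seriously, your decomposition argument does not close. You write $D^b(X_N^r)=\langle Lf_i^*D^b(X_{N\setminus i}^{r+1}),\ \text{exceptional twists}\rangle$ and then apply Lemma~\ref{crucial} only to the first piece; the exceptional-divisor contributions (supported on the $E_S$ with $i\in S$, including non-massive strata such as $E_{\{i\}}$) are never shown to lie in $\langle\cA,\ \text{non-cuspidal}\rangle$, so the conclusion $D^b_{cusp}\subset\cA$ does not follow. The paper avoids this by arguing dually: take $E\in D^b_{cusp}$ with $R\Hom(\hat\bG_N^r,E)=0$; Lemma~\ref{crucial} plus adjunction gives $R\Hom(\hat\bG_{N\setminus i}^{r+1},Rf_{i*}E)=0$, hence $Rf_{i*}E=0$ for every $i$ by induction on $n$. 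This forces $\Supp E\subset\bigcap_i\Exc(f_i)\subset E_N$ (empty if $r\le 0$). One then factors $f_i=f_i'\circ p$ through the single blow-down $p$ of $E_N\cong\LM_N\times\bP^r$, shows $Rp_*E=0$, and applies Orlov's Theorem~\ref{OrlovII} for $p$ alone. Claim~\ref{cusp} refines the resulting filtration of $E$ to pieces in $R\alpha_*[D^b_{cusp}(\LM_N)\boxtimes\cO(-i)]$; since $\dim\LM_N<\dim X_N^r$, induction on dimension (not just on $n$) now says these are generated by $\hat\bG_N^r$, forcing $E=0$. The double induction on $(n+r,n)$ and the support-shrinking step are the ideas your outline is missing.
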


In particular, when $r=-1$ this gives that the collection $\hat\bG_N$ is a full, exceptional collection in $D^b_{cusp}(\LM_N)$ 
(Theorem \ref{mainLM}). 

\bp[Proof of Prop. \ref{full exc}]
We argue by induction on the dimension $n+r$, and for a fixed $n+r$,
by induction on $n$.  The base of induction is $X_1^{r-1}$. Note that we have a $\bP^1$-bundle
$\pi_1:\,X_1^{r-1}\to\bP^{r-1}$. By Orlov's Theorem \ref{OrlovI}, $D^b_{cusp}(X_1^{r-1})$
is generated by 
$$\pi_1^*D^b(\bP^{r-1})\mathop{\otimes}\cO(-E_1)=
\langle
\cO(-rH+(r-1)E_1), \ldots, \cO(-2H+E_1), \cO(-H)\rangle,$$
which is precisely our claim in this case. 

Assume $n\geq2$. Choose an object $E\in D^b_{cusp}(X^r_N)$ such that $R\Hom(\cT,E)=0$ for every $\cT\in \hat\bG_N^{r}$.
We need to show that $E=0$.  We first show that ${Rf_i}_*E=0$ for all $i\in N$. Let $i\in N$. 
By Lemma~\ref{axfbafbf}, ${Rf_i}_*E\in D^b_{cusp}(X^{r+1}_{N\setminus\{i\}})$. By the inductive assumption, to prove 
${Rf_i}_*E=0$, it is sufficient to prove that $R\Hom(\cT, {Rf_i}_*E)=0$ for every $\cT\in \hat\bG_{N\setminus\{i\}}^{r+1}$. Note that 
$$R\Hom(\cT, {Rf_i}_*E)=R\Hom(Lf_i^*\cT, E).$$
If we let $C=\Cone\left [{L\pi_i}^*{R\pi_i}_*{Lf_i}^*\cT\to {Lf_i}^*\cT\right]$, it follows by Lemma~\ref{crucial} that
$R\Hom(C, E)=0$. Using the distinguished triangle 
$${L\pi_i}^*{R\pi_i}_*{Lf_i}^*\cT\ra {Lf_i}^*\cT\ra C\ra {L\pi_i}^*{R\pi_i}_*{Lf_i}^*\cT[1]$$
and the fact that  for all $F\in D^b(X^{r}_{N\setminus\{i\}})$ we have (since $E\in D^b_{cusp}(X^r_N)$)
$$R\Hom(L{\pi_i}^*F,E)=R\Hom(F, R{\pi_i}_*E)=0,$$
it follows that $R\Hom(Lf_i^*\cT, E)=0$. This proves that ${Rf_i}_*E=0$ for all $i\in N$. 
In particular, by Lemma \ref{support} the support $\Supp\, E$ of $E$ is contracted by 
all birational maps $f_i$, $i\in N$:

$$\Supp\, E\subseteq \Exc(f_1)\cap\ldots\cap \Exc(f_n).$$
Since $\Exc(f_i)=\bigcup_{i\in S}E_S$ and 
$E_S\cap E_T\ne\emptyset$ if and only if $S\subseteq T$ or $T\subseteq S$, 
this implies that $\Exc(f_1)\cap\ldots\cap \Exc(f_n)$ can be non-empty only if  
$r\ge1$, in which case this intersection is contained in $E_N$, the exceptional divisor 
corresponding to blowing up the proper transform
$\Delta_N\cong \LM_N$ of the subspace spanned by the points in $N$
(the last blow-up). It follows that 
$$\Supp\,E\subseteq E_N\cong \Delta_N\times \bP^r.$$

For every $i\in N$, we can decompose $f_i=f_i'\circ p$, where $p: X^r_N\ra Y$ blows-down $E_N$ (with image $\De_N$)
and $f_i': Y\ra X^{r+1}_{N\setminus\{i\}}$ is the composition of blow-downs of $E_S$ for $i\in S$, $S\ne N$. If we denote $E'_S=p(E_S)$,
it is still the case that $E'_S\cap E'_T\ne\emptyset$ if and only if $S\subseteq T$ or $T\subseteq S$. It follows that 
\begin{equation}\label{empty}
\Exc(f'_1)\cap\ldots\cap \Exc(f'_n)=\emptyset.
\end{equation}

Since ${Rf'_i}_*Rp_*E={Rf_i}_*E=0$ for all $i$, (\ref{empty}) and Lemma \ref{support} impliy that
$$Rp_*E=0.$$

Let $\alpha:\,E_N\hookrightarrow X^r_N$ be the inclusion map. By Orlov's Theorem~\ref{OrlovII}, 
$E$~belongs to the subcategory in $D^b(X^r_N)$ with semi-orthogonal decomposition
$$\left\langle 
R\alpha_*\left[D^b(\LM_N)\boxtimes\cO_{\bP^r}(-r)\right],\ldots,
R\alpha_*\left[D^b(\LM_N)\boxtimes\cO_{\bP^r}(-1)\right]\right\rangle.$$
In particular, there exist morphisms
$$0=E_0\to E_1\to\ldots\to E_r=E$$
that fit into exact triangles 
$$E_{i-1}\to E_i\to F_i \ra E_{i-1}[1]\quad\hbox{\rm with}\ F_i\in R\alpha_*\left[D^b(\LM_N)\boxtimes\cO_{\bP^r}(-i)\right].$$

\begin{claim}\label{cusp}
$F_i\in R\alpha_*\left[D^b_{cusp}(\LM_N)\boxtimes\cO_{\bP^r}(-i)\right]$ for all $1\leq i\leq r$.
\end{claim}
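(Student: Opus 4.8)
The plan is to show that each subquotient $F_i$, which a priori lives in $R\alpha_*[D^b(\LM_N)\boxtimes\cO_{\bP^r}(-i)]$, actually has its $\LM_N$-factor inside the cuspidal block $D^b_{cusp}(\LM_N)$. Recall the semi-orthogonal decomposition of Proposition~\ref{semiort}, which expresses $D^b(\LM_N)$ in terms of $D^b_{cusp}(\LM_N)$ together with the pull-back blocks $\pi_K^*D^b_{cusp}(\LM_{N\setminus K})$ and $\cO$. Writing $F_i = R\alpha_*(B_i\boxtimes\cO_{\bP^r}(-i))$ for some $B_i\in D^b(\LM_N)$, I would decompose $B_i$ along this s.o.d.\ and argue that every non-cuspidal piece must vanish. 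The mechanism is the vanishing $Rp_*E=0$ established just above: since $p$ blows down $E_N\cong\De_N\times\bP^r\cong\LM_N\times\bP^r$ to $\De_N\cong\LM_N$, the map $p$ restricted to $E_N$ is exactly the projection $\LM_N\times\bP^r\to\LM_N$, and $Rp_*\circ R\alpha_*$ computes the cohomology of $B_i\boxtimes\cO_{\bP^r}(-i)$ along the $\bP^r$ direction.

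First I would record that $Rp_*R\alpha_*(B\boxtimes\cO_{\bP^r}(-i))$ is computed by the projection formula and the K\"unneth decomposition: pushing forward along $E_N\to\De_N$ kills $\cO_{\bP^r}(-i)$ for $1\le i\le r$ (since $R\Gamma(\bP^r,\cO(-i))=0$ in that range), so each individual $F_i$ pushes to zero under $p$ regardless of $B_i$. This means $Rp_*E=0$ is automatic from the shape of the Orlov blocks and gives no information by itself; the cuspidality must instead be extracted from the stronger hypothesis that $Rf_i{}_*E=0$ for \emph{every} $i\in N$, not merely $Rp_*E=0$.

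So the key step is to feed the individual vanishings $Rf_i{}_*E=0$ through the filtration. I would argue by descending induction on the filtration length, or equivalently use semi-orthogonality to project $E$ onto each graded piece. The point is that $f_i = f_i'\circ p$ and the further blow-downs $f_i'$ interact with the $\LM_N$-factor of $E_N$ exactly as the forgetful maps $\LM_N\to\LM_{N\setminus K}$ do on boundary strata; concretely, restricting to $E_N$ and using Lemma~\ref{restriction same} (the restriction formula for the $G$-bundles) together with the identification $Z_{N_1,\dots,N_k}\simeq\LM_{N_1}\times\cdots\times X^r_{N_k}$, the condition $Rf_i{}_*E=0$ forces the $\LM_N$-component $B_i$ to have vanishing pushforward under each forgetful map $\pi_j:\LM_N\to\LM_{N\setminus j}$. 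By the very definition of the cuspidal block (Definition~\ref{zxfbdfdfhdfd}), this is precisely the statement $B_i\in D^b_{cusp}(\LM_N)$.

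The main obstacle I anticipate is bookkeeping the compatibility between the Orlov filtration on $E$ (indexed by the twist $\cO_{\bP^r}(-i)$) and the cuspidality condition on the $\LM_N$-factor: one must verify that the projection functors of the Orlov s.o.d.\ on $D^b(E_N)$ commute appropriately with $Rf_i{}_*$, so that applying $Rf_i{}_*$ to the whole filtration yields, term by term, the Orlov filtration of $Rf_i{}_*E=0$ and hence forces each graded piece to be cuspidal. This is the same interchange-of-functors phenomenon that was handled via Tor-independence and base change in the proof of Theorem~\ref{prototype}, so I would model the argument on equations~\eqref{Tor1}--\eqref{asKJGBAMS}, checking that the relevant fiber products have rational singularities so that cohomology and base change applies cleanly.
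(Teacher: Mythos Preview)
Your proposal has a genuine gap: the hypothesis you intend to use, $Rf_{i*}E=0$, carries no information beyond $Rp_*E=0$ at this stage of the argument. Recall that $f_i=f_i'\circ p$, so $Rf_{i*}E=Rf'_{i*}(Rp_*E)$; once you have established $Rp_*E=0$ (which, as you correctly observe, is automatic from the shape of the Orlov blocks), the vanishing $Rf_{i*}E=0$ follows for free and cannot possibly force any further constraint on the $\LM_N$-components $B_i$. Your proposed mechanism --- that ``$f_i'$ interacts with the $\LM_N$-factor of $E_N$ exactly as the forgetful maps do'' --- never gets off the ground, because the object you would feed into $f_i'$ is already zero.

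The hypothesis you are overlooking is the one that was there from the start: $E\in D^b_{cusp}(X^r_N)$, i.e.\ $R\pi_{j*}E=0$ for every $j\in N$. This is the condition the paper actually uses. The forgetful map $\pi_j:X^r_N\to X^r_{N\setminus j}$ sends $E_N\cong\LM_N\times\bP^r$ to $E_{N\setminus j}\cong\LM_{N\setminus j}\times\bP^r$ via $\pi_j\times\mathrm{id}_{\bP^r}$, so $R\pi_{j*}$ preserves the Orlov blocks (it takes $R\alpha_*[D^b(\LM_N)\boxtimes\cO_{\bP^r}(-i)]$ into $R\alpha_{j*}[D^b(\LM_{N\setminus j})\boxtimes\cO_{\bP^r}(-i)]$) and $R\pi_{j*}F_i=0$ is equivalent to $R\pi_{j*}B_i=0$. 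Now apply $R\pi_{j*}$ to the entire filtration $0=E_0\to\cdots\to E_r=E$. Since $R\pi_{j*}E=0$, if some $R\pi_{j*}F_p\ne0$ and $p$ is maximal with this property, then $R\pi_{j*}E_p=\cdots=R\pi_{j*}E_r=0$ forces $R\pi_{j*}F_p\simeq R\pi_{j*}E_{p-1}[1]$; but the right-hand side is built from Orlov blocks with twist strictly less than $p$, which are right-orthogonal to the block containing $R\pi_{j*}F_p$. This contradiction gives $R\pi_{j*}B_i=0$ for all $i$ and all $j$, i.e.\ $B_i\in D^b_{cusp}(\LM_N)$. Your instinct to ``push the filtration through a vanishing and use semi-orthogonality of the Orlov pieces'' is exactly right --- you just need to push it through $\pi_j$, not $f_i$.
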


The proposition now follows immediately from Claim \ref{cusp}: by the inductive hypothesis, the subcategory 
$R\alpha_*\left[D^b_{cusp}(\LM_N)\boxtimes\cO_{\bP^r}(-i)\right]$ is generated by sheaves that belong to $\hat\bG_N^r$, but the latter have no non-zero morphisms into $E$. Thus  $E=0$.
\ep

\bp[Proof of Claim \ref{cusp}]
Let $F_i=R\alpha_*(H_i\boxtimes\cO_{\bP^r}(-i))$ for some $H_i\in D^b(\LM_N)$. We have to show that $H_i\in D^b_{cusp}(\LM_N)$ for all $i$. 

Let $j\in N$ and let $\alpha_j:\,\LM_{N\setminus\{j\}}\times \bP^r\hookrightarrow X^r_{N\setminus\{j\}}$ be the inclusion. Then 
$$R{\pi_j}_*F_i=R{\pi_j}_*R\alpha_*(H_i\boxtimes\cO_{\bP^r}(-i))=R{\alpha_j}_*(R{\pi_j}_*H_i\boxtimes\cO_{\bP^r}(-i)),$$
where $R{\pi_j}_*H_i\in D^b(\LM_{N\setminus\{j\}})$. In particular, $R{\pi_j}_*H_i=0$ if and only if $R{\pi_j}_*F_i=0$. 
Note that  ${R\pi_j}_*F_i$ belongs to the subcategory 
$$R{\alpha_{j}}_*\left[D^b(\LM_{N\setminus\{j\}})\boxtimes\cO_{\bP^r}(-i)\right].$$ 

Suppose  ${R\pi_j}_*F_p\ne0$ for some $j\in N$ and choose the maximal $p$ with this property.
Applying ${R\pi_j}_*$ to the filtration gives morphisms
$$0={R\pi_j}_*E_0\to {R\pi_j}_*E_1\to\ldots\to {R\pi_j}_*E_r={R\pi_j}_*E=0$$
that fit into exact triangles 
$${R\pi_j}_*E_{i-1}\to {R\pi_j}_*E_i\to {R\pi_j}_*F_i \ra {R\pi_j}_*E_{i-1}[1].$$
In particular, 
${R\pi_j}_*E_{p}={R\pi_j}_*E_{p+1}=\ldots={R\pi_j}_*E_{r}={R\pi_j}_*E=0$ and
$${R\pi_j}_*F_p\simeq {R\pi_j}_*E_{p-1}[1].$$
However, ${R\pi_j}_*E_{p-1}[1]$ belongs to the subcategory generated by 
$$R\alpha_{j}*\left[D^b(\LM_{N'})\boxtimes\cO_{\bP^r}(-i)\right]$$
for $i<p$, and thus can not have a non-zero morphism to ${R\pi_j}_*F_p$.
\ep

We now prove Lemma \ref{crucial}. The proof occupies the rest of this section. 
We first prove the case when $\cT'={G^{r+1}_a}^\vee$ on $X^{r+1}_{N\setminus\{i\}}$ in Lemma \ref{basic}. 

\begin{lemma}\label{basic}
Let $r\ge -1$. For all $1\leq a\leq n+r$, $i\in N$, we have
$${\pi_i}_*\big(f_i^*{G_a^{r+1}}^\vee\big)=0,$$
$${R^1\pi_i}_*\big(f_i^*{G_a^{r+1}}^\vee\big)={G_{a-1}^{r}}^\vee\oplus{G_{a-2}^{r}}^\vee\oplus\ldots\oplus{G_{1}^{r}}^\vee \text{ if } a\geq2 \text{ and }0 \text{ if }a=1,$$
$$\Cone\left [
{L\pi_i}^*{R\pi_i}_*{Lf_i}^*{G_a^{r+1}}^\vee\to {Lf_i}^*{G_a^{r+1}}^\vee\right]={G_{a}^{r}}^\vee\oplus{G_{a-1}^{r}}^\vee\oplus\ldots\oplus{G_{1}^{r}}^\vee.$$
\end{lemma}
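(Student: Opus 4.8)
The plan is to derive all three identities simultaneously from one short exact sequence, using that the bundles ${G_a^r}^\vee$ are already known to be cuspidal. By Lemma~\ref{axfbafbf} one has $Lf_i^*{G_a^{r+1}}^\vee=f_i^*{G_a^{r+1}}^\vee={G_a^r}^\vee(-F)$, where $F=\sum_{i\in J}(a-|J|)E_J\ge0$ is the effective divisor supported on the exceptional divisors $E_J$ containing the index $i$ (so $F=0$ precisely when $a=1$). Tensoring $0\to\cO(-F)\to\cO\to\cO_F\to0$ with ${G_a^r}^\vee$ gives the exact sequence
\begin{equation*}
0\to Lf_i^*{G_a^{r+1}}^\vee\to{G_a^r}^\vee\to{G_a^r}^\vee|_F\to0 .
\end{equation*}
Since ${G_a^r}^\vee\in\hat\bG_N^r\subset D^b_{cusp}(X_N^r)$ by Lemma~\ref{axfbafbf}, we have $R\pi_{i*}{G_a^r}^\vee=0$, so applying $R\pi_{i*}$ to this sequence yields the quasi-isomorphism
\begin{equation*}
R\pi_{i*}\,Lf_i^*{G_a^{r+1}}^\vee\ \simeq\ R\pi_{i*}\big({G_a^r}^\vee|_F\big)[-1].
\end{equation*}
The fibres of $\pi_i$ are at most one-dimensional, so $R\pi_{i*}\,Lf_i^*{G_a^{r+1}}^\vee$ is concentrated in degrees $0$ and $1$; comparing with the right-hand side (a sheaf placed in degree $0$, shifted by $[-1]$) forces ${\pi_i}_*Lf_i^*{G_a^{r+1}}^\vee=0$ — this is assertion (1) — and identifies ${R^1\pi_i}_*Lf_i^*{G_a^{r+1}}^\vee$ with the single sheaf ${\pi_i}_*({G_a^r}^\vee|_F)$ on $X^r_{N\setminus i}$. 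For $a=1$ this already finishes (1) and (2).

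The remaining work is to show ${\pi_i}_*({G_a^r}^\vee|_F)\cong\bigoplus_{b=1}^{a-1}{G_b^r}^\vee$. I would filter the (non-reduced) divisor $F$ by its components $E_J$, $i\in J$, and compute the restrictions ${G_a^r}^\vee|_{E_J}$ via Lemma~\ref{restriction same} under the identification $E_J\cong\LM_J\times X^r_{N\setminus J}$. The outermost layer $E_i$ is the decisive one: here $\LM_{\{i\}}$ is a point, $\pi_i$ restricts to the identification $E_i\cong X^r_{N\setminus i}$, and Lemma~\ref{restriction same} gives ${G_a^r}^\vee|_{E_i}={G_{a-1}^r}^\vee$, producing the top summand $b=a-1$; the deeper divisors $E_J$ with $|J|\ge2$ together with the multiplicities $a-|J|$ account for the lower summands, with Lemma~\ref{push exceptional} used to absorb the thickening along each successive blow-down. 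This is naturally set up as a double induction, on $n+r$ and, for fixed $n+r$, on $a$, the base cases being $a=1$ (where $F=0$) and $n=1$, in which $\pi_i\colon X_1^{r-1}\to\PP^{r-1}$ is the $\PP^1$-bundle of Proposition~\ref{full exc} and everything reduces to an explicit relative-cohomology computation on a projective bundle. I expect this step — pinning down the associated graded of ${G_a^r}^\vee|_F$ as the \emph{precise} direct sum, not merely its rank, and controlling the behaviour over the degenerate boundary fibres of $\pi_i$ — to be the main technical obstacle.

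Finally I would deduce (3) from (1) and (2). Writing $Q=\bigoplus_{b=1}^{a-1}{G_b^r}^\vee$ on $X^r_{N\setminus i}$, the first two parts say $R\pi_{i*}Lf_i^*{G_a^{r+1}}^\vee\simeq Q[-1]$, hence $L\pi_i^*R\pi_{i*}Lf_i^*{G_a^{r+1}}^\vee\simeq(\pi_i^*Q)[-1]$ and the object in question sits in the counit triangle
\begin{equation*}
(\pi_i^*Q)[-1]\to Lf_i^*{G_a^{r+1}}^\vee\to \Cone\to(\pi_i^*Q).
\end{equation*}
By the semi-orthogonal decomposition $D^b(X_N^r)=\langle D^b_{cusp}(X_N^r),\,L\pi_i^*D^b(X^r_{N\setminus i})\rangle$ of Proposition~\ref{asgasga}, this cone is exactly the projection of $Lf_i^*{G_a^{r+1}}^\vee$ onto the cuspidal factor for $\pi_i$. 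Applying this (triangulated) projection to the short exact sequence above, and using that ${G_a^r}^\vee$ is itself cuspidal, the cone fits into a triangle involving ${G_a^r}^\vee$ and the cuspidal projection of ${G_a^r}^\vee|_F$; the computation of the previous paragraph identifies the latter with $\bigoplus_{b=1}^{a-1}{G_b^r}^\vee[1]$, so that $\Cone\cong{G_a^r}^\vee\oplus\bigoplus_{b=1}^{a-1}{G_b^r}^\vee=\bigoplus_{b=1}^a{G_b^r}^\vee$. The one genuinely delicate point here is that this extension really splits as a direct sum: since $R\Hom({G_b^r}^\vee,{G_a^r}^\vee)$ need not vanish for $b<a$ (see the remark following Lemma~\ref{-G2}), the splitting cannot be read off from orthogonality alone and must instead be propagated from the split $\PP^1$-bundle base case through the induction. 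Throughout, I would keep careful track of the distinction between the bundles ${G_b^r}^\vee$ on the source $X_N^r$ appearing in (3) and those on the target $X^r_{N\setminus i}$ appearing in (2).
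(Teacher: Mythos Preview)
Your starting observation is correct and clean: $f_i^*{G_a^{r+1}}^\vee={G_a^r}^\vee(-F)$ with $F=\sum_{i\in J}(a-|J|)E_J$, and the resulting short exact sequence together with cuspidality of ${G_a^r}^\vee$ immediately gives~(1) and reduces~(2) to identifying $\pi_{i*}({G_a^r}^\vee|_F)$. This is exactly the global shape of the paper's argument. But the paper does \emph{not} attack ${G_a^r}^\vee|_F$ in one go, and does not use induction on $n+r$ or on $a$. Instead it interpolates: setting $H_1=f_i^*{G_a^{r+1}}^\vee$ and $H_a={G_a^r}^\vee$, it builds intermediate line bundles $H_{k+1}=H_k+F_k$ with $F_k=E^1+\ldots+E^k$ (here $E^s=\sum_{i\in J,\,|J|=s}E_J$), so that each short exact sequence $0\to H_k\to H_{k+1}\to{H_{k+1}}|_{F_k}\to0$ peels off a single summand. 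A second filtration of each layer ${H_{k+1}}|_{F_k}$ (the ``(B)'' sequences) together with explicit restriction computations on $E_I$ shows $\pi_{i*}({H_{k+1}}|_{F_k})=(G_k^r)^\vee$ and, crucially, pins down the full cone of the counit map for each layer as $(G_k^r)^\vee[1]$. No induction is needed for the lemma itself.

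Your proposal has two genuine gaps relative to this. First, for~(2) the filtration ``by components $E_J$'' and the double induction are too vague to execute: the individual divisors $E_J$ carry restrictions of the form $(-\psi_x)\boxtimes(G^r_{k-s})^\vee$ rather than the bundles ${G_b^r}^\vee$ themselves, and it is the paper's specific grouping into the divisors $F_k$ that makes one ${G_k^r}^\vee$ appear at each stage. Second, and more seriously, your argument for~(3) conflates two different objects: knowing $\pi_{i*}({G_a^r}^\vee|_F)\cong\bigoplus_{b=1}^{a-1}{G_b^r}^\vee$ on the base does \emph{not} identify the cuspidal projection $\Cone[L\pi_i^*R\pi_{i*}({G_a^r}^\vee|_F)\to{G_a^r}^\vee|_F]$ on the source --- that cone is an honest two-term complex and you have only computed its $R\pi_{i*}$. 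The paper avoids this by computing the cone layer by layer (their~(B3)), where at each stage the kernel of the surjective counit map is explicitly $\pi_i^*(G_k^r)^\vee\otimes\cO(-F_k)={G_k^r}^\vee$.

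Finally, your worry about the splitting is well placed, but the resolution is simpler than propagating through an induction: in the downward induction along the $H_k$, each new extension on $X^r_{N\setminus i}$ has the form $0\to(G_k^r)^\vee\to\,?\,\to\bigoplus_{j>k}(G_j^r)^\vee\to0$, and the relevant $\Ext^1$ in \emph{this} direction vanishes by Lemma~\ref{-G2}(3).
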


\bp
If $a=1$, then $f_i^*{G_1^{r+1}}^\vee=-H={G_1^{r}}^\vee$ and the statements follow at once as 
${G_1^{r}}^\vee$ is a cuspidal object.  
Assume now $a\geq2$. For clarity, consider first the situation when $a\leq n$. In this case, we have
$$(G_a^r)^\vee=-aH+(a-1)\sum_{j\in N\setminus i} E_j+\ldots+1\cdot\sum_{J\subseteq N\setminus i, |J|=a-1} E_J.$$
Define divisors on $X_N^r$ as follows: 
$$E^s=\sum_{i\in I\subseteq N, |I|=s}E_I\quad (1\leq s\leq a-1),$$
$$F_s=E^1+E^2+\ldots+E^{s}=E_i+\sum_j E_{ij}+\sum_{j,k} E_{ijk}+\ldots+\sum_{i\in I\subseteq N, |I|=s}E_I.$$

$$H_1=f_i^*{G_a^{r+1}},\quad H_2=H_1+F_1,\quad H_3=H_2+F_2,\quad\ldots, H_a=H_{a-1}+F_{a-1}={G_a^{r}}^\vee.$$

There are two sets of exact sequences that we will use, identifying as usual, divisors with the corresponding line bundles:
\begin{itemize}
\item[(A) ]
$$0\ra H_1\ra H_2\ra {H_2}_{| F_1}\ra 0,$$
$$0\ra H_2\ra H_3\ra {H_3}_{| F_2}\ra 0,$$
$$\vdots$$
$$0\ra  H_{a-1}\ra  H_a\ra {H_a}_{| F_{a-1}}\ra 0.$$

\item[(B) ] For $2\leq k\leq a-1$ and $1\leq s\leq k-1$, letting
$$Q^k_{s+1}= \big(H_{k+1}-F_s\big)_{|E^{s+1}}=\bigoplus_{i\in I\subseteq N, |I|=s+1} \big(H_{k+1}-F_s\big)_{|E_I},$$
we have exact sequences
$$0\ra Q^k_2\ra {H_{k+1}}_{|F_2}\ra {H_{k+1}}_{|F_1}\ra0,$$ 
$$0\ra Q^k_3\ra {H_{k+1}}_{|F_3}\ra {H_{k+1}}_{|F_2}\ra0,$$ 
$$\vdots$$
$$0\ra Q^k_{k}\ra {H_{k+1}}_{|F_k}\ra {H_{k+1}}_{|F_{k-1}}\ra0,$$ 
\end{itemize}
Note that $F_{s+1}=F_s+E^{s+1}$ and the sequences in (B) are obtained by tensoring with $H_{k+1}$ the canonical sequence 
\begin{equation}\label{canonical}
0\ra \cO(-F_s)_{|E^{s+1}}\ra \cO_{F_{s+1}}\ra \cO_{F_s}\ra0.
\end{equation}

Lemma \ref{basic} follows at once from taking $k=1$ in (A1), (A2) in Claim \ref{claimAB}. Here parts (B1)-(B3) refer to the exact sequences in (B), while (A1)-(A2) refer to the exact sequences in (A). Parts (B1)-(B3) will be used to prove (A1)-(A2) (this is why they appear first).

We now discuss the case when $a>n$. In this case, we have 
$$(G_a^r)^\vee=-aH+(a-1)\sum_{j\in N\setminus i} E_j+\ldots+(a-(n-1))\cdot E_{N\setminus i}.$$

We define $F_s$ as above in the range $1\leq s\leq n$, and let 
$$F_{a-1}=\ldots=F_{n+1}=F_n=E^1+E^2+\ldots+E^n$$

We define $H_k$ as above, for all $1\leq k\leq a$. As before, $H_a=(G_a^r)^\vee$. 
We use the exact sequences as in (A). In order to analyze the sheaves ${H_{k+1}}_{|F_k}$, there are 
two cases to consider: (1) $1\leq k\leq n<a$, and (2) $n<k\leq a-1$. For a fixed $k$, we consider the sequences (\ref{canonical})
as in (B), where for $1\leq s\leq k-1$, $Q^k_{s+1}$ is defined as before if $s\leq n-1$, while 
if $k-1\geq s\geq n$, we let
$$Q^k_{n+1}=\ldots=Q^k_k=0.$$
Hence, the exact sequences in (B) that we consider are:
$$0\ra Q^k_2\ra {H_{k+1}}_{|F_2}\ra {H_{k+1}}_{|F_1}\ra0,$$ 
$$0\ra Q^k_3\ra {H_{k+1}}_{|F_3}\ra {H_{k+1}}_{|F_2}\ra0,$$ 
$$\vdots$$
$$0\ra Q^k_{n}\ra {H_{k+1}}_{|F_n}\ra {H_{k+1}}_{|F_{n-1}}\ra0,$$ 
The rest of the proof is identical, as Claim \ref{claimAB} still holds. 
\ep

\begin{claim}\label{claimAB}
\begin{itemize}
\item[(B1) ] For $2\leq k\leq a-1$ and $1\leq s\leq \text{min}\{k-1,n-1\}$, we have 
$$R{\pi_i}_*Q^k_{s+1}=0.$$ 

\item[(B2) ] For all $1\leq  s\leq k\leq a-1$, we have
$${R^l\pi_i}_*\big({H_{k+1}}_{|F_s}\big)=0\quad\text{ for all } l>0,$$
$${\pi_i}_*\big({H_{k+1}}_{|F_s}\big)=(G_{k}^{r})^\vee.$$

\item[(B3) ] For all  $1\leq  s\leq k\leq a-1$, the canonical map 
$${\pi_i}^*{\pi_i}_*\big({H_{k+1}}_{|F_s}\big)\ra \big({H_{k+1}}_{|F_s}\big)$$ is surjective
with kernel $\pi_i^*(G_k^r)^\vee\otimes\cO(-F_s)$. Moreover, 
$$\Cone\left [
{L\pi_i}^*{R\pi_i}_*\big({H_{k+1}}_{|F_s}\big)\ra \big({H_{k+1}}_{|F_s}\big)\right]=\big(\pi_i^*(G_k^r)^\vee\otimes\cO(-F_s)\big)[1].$$
In particular, 
$$\Cone\left [
{L\pi_i}^*{R\pi_i}_*\big({H_{k+1}}_{|F_k}\big)\ra \big({H_{k+1}}_{|F_k}\big)\right]=(G_{k}^{r})^\vee[1].$$

\item[(A1) ] For all $1\leq k\leq a-1$, we have
$${\pi_i}_*\big(H_k\big)=0,$$ 
$${R^1\pi_i}_*\big(H_k\big)=(G_{a-1}^{r})^\vee\oplus(G_{a-2}^{r})^\vee\oplus\ldots\oplus(G_{k}^{r})^\vee.$$

\item[(A2) ] For all $1\leq k\leq a$, we have
$$\Cone\left [
{L\pi_i}^*{R\pi_i}_*\big(H_k\big)\ra \big(H_k\big)\right]=(G_a^{r})^\vee\oplus(G_{a-1}^{r})^\vee\oplus\ldots\oplus(G_{k}^{r})^\vee.$$
\end{itemize}
\end{claim}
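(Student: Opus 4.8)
The plan is to prove the five parts in the stated order (B1) $\Rightarrow$ (B2) $\Rightarrow$ (B3) $\Rightarrow$ (A1) $\Rightarrow$ (A2), running an induction on $s$ for the (B)-parts (built on the exact sequences (B)) and a downward induction on $k$ for the (A)-parts (built on the exact sequences (A)). The one geometric input that makes everything run is cuspidality of the bundles $G_b^\vee$ on Losev--Manin factors (Lemma~\ref{-G}(2), Lemma~\ref{-G2}(2)): once I know that a line bundle restricted to an exceptional divisor $E_I\cong\LM_I\times X^r_{N\setminus I}$ is a box product whose $\LM_I$-factor is some $G_b^\vee$ with $1\le b\le|I|-1$, the map $\pi_i|_{E_I}$ is (forget $i$)$\,\times\,\mathrm{id}$, and flat base change together with cuspidality force $R\pi_{i*}$ of that summand to vanish.

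For (B1) I would restrict $H_{k+1}-F_s$ to each component $E_I$ of $E^{s+1}$ (with $i\in I$, $|I|=s+1$). Writing $H_{k+1}-F_s$ in the Kapranov model and using the restriction formula of Lemma~\ref{restriction same} together with the adjunction $\cO(E_J)|_{E_J}=\cO(-1)$, the restriction becomes a box product on $\LM_I\times X^r_{N\setminus I}$ whose first factor is $G_b^\vee$ for a value $b$ in the range $1\le b\le s$; cuspidality then gives $R\pi_{i*}\big((H_{k+1}-F_s)|_{E_I}\big)=0$, and summing over $I$ yields (B1). This restriction computation, though elementary, is the technical heart and the step most likely to hide coefficient/sign traps, so I would pin down $b$ explicitly before proceeding (the cases $a\le n$ and $a>n$ are handled uniformly).

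Given (B1), parts (B2) and (B3) follow by induction on $s$ from the sequences $0\to Q^k_{s+1}\to(H_{k+1})|_{F_{s+1}}\to(H_{k+1})|_{F_s}\to0$. Since $R\pi_{i*}Q^k_{s+1}=0$, the long exact sequence identifies $R\pi_{i*}(H_{k+1})|_{F_{s+1}}$ with $R\pi_{i*}(H_{k+1})|_{F_s}$; the base case $s=1$ is the direct computation of $R\pi_{i*}$ on $E_i\cong X^r_{N\setminus i}$, which produces $(G_k^r)^\vee$ with no higher direct images, giving (B2). For (B3) I would track the counit $L\pi_i^*R\pi_{i*}\to\mathrm{id}$ through the same sequences, identifying its cone on $(H_{k+1})|_{F_s}$ with the shifted kernel $\big(\pi_i^*(G_k^r)^\vee\otimes\cO(-F_s)\big)[1]$ via Lemma~\ref{push exceptional} and the pullback/restriction identities; the case $s=k$ collapses this to $(G_k^r)^\vee[1]$.

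Finally, for (A1) and (A2) I would apply the cuspidalization functor $\Phi(-)=\Cone\big(L\pi_i^*R\pi_{i*}(-)\to(-)\big)$, which is triangulated since it is the projection onto $D^b_{cusp}$ in the s.o.d.\ of Prop.~\ref{asgasga}, to the triangles $H_k\to H_{k+1}\to(H_{k+1})|_{F_k}\to$. By downward induction on $k$, with base $k=a$ where $H_a=(G_a^r)^\vee$ is already cuspidal so $\Phi(H_a)=(G_a^r)^\vee$, the triangle and (B3) produce a triangle $(G_k^r)^\vee\to\Phi(H_k)\to\bigoplus_{j>k}(G_j^r)^\vee\xrightarrow{\delta}(G_k^r)^\vee[1]$, and the parallel long exact sequence for $R\pi_{i*}$ gives the (A1) statements on $\pi_{i*}H_k$ and $R^1\pi_{i*}H_k$. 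The \emph{main remaining obstacle} is to show the connecting map $\delta$ vanishes, equivalently that the extensions in (A1)/(A2) \emph{split as direct sums}, equivalently that $\Ext^1\big((G_j^r)^\vee,(G_k^r)^\vee\big)=H^1(G_j^r-G_k^r)=0$ for $j>k$. This is precisely the positive-degree acyclicity \emph{not} covered by Lemma~\ref{-G2}(3)--(4) (which control the $a>b$ case and the $-\psi_0$ twist), so I would establish it separately by writing $G_j^r-G_k^r$ in a Kapranov model and invoking Lemma~\ref{easy acyclic} (or its Serre-dual form) to show $R\Hom(G_k^r,G_j^r)$ is concentrated in degree $0$. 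With $\delta=0$ the cones split into the asserted direct sums, giving (A2), and specializing $k=1$ recovers Lemma~\ref{basic}.
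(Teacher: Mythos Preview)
Your plan matches the paper's proof essentially step for step: the order (B1)$\Rightarrow$(B2)$\Rightarrow$(B3)$\Rightarrow$(A1)$\Rightarrow$(A2), the induction on $s$ through the (B)-sequences, the downward induction on $k$ through the (A)-sequences, and the use of cuspidality of the $\LM_I$-factor to kill $R\pi_{i*}Q^k_{s+1}$. The paper packages the restriction computations you describe for (B1) into a separate Claim~\ref{comps}, where it shows precisely that $(H_{k+1}-F_s)|_{E_I}=(-\psi_x)\boxtimes(G^r_{k-s})^\vee$; your ``first factor is $G_b^\vee$ for some $1\le b\le s$'' is the same thing (in fact $b=s$, since $-\psi_x=G_s^\vee$ on $\LM_I$).

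Two places are worth flagging. First, for (B3) the paper's inductive step is more delicate than ``track the counit'': it runs the snake lemma on the diagram comparing $h_{s+1}$ and $h_s$, obtains an exact sequence $0\to\cK_{s+1}\to\cK_s\to Q^k_{s+1}\to\Coker(h_{s+1})\to0$, and then must argue that the middle map is nonzero (else the (B)-sequence would split, which is ruled out by $\Hom(\cO_{F_{s+1}},\cO_{E^{s+1}}(-F_s))=0$) and hence surjective (since $Q^k_{s+1}=(\cK_s)|_{E^{s+1}}$ and the only nonzero map $\cL\to\cL|_F$ is restriction). Your outline does not mention this non-splitting check; it is the one genuinely nonformal point in (B3).

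Second, you are right that the splitting in (A1)/(A2) needs $\Ext^1((G_j^r)^\vee,(G_k^r)^\vee)=0$ for $j\neq k$, and that only the direction $j<k$ is immediate from Lemma~\ref{-G2}(3). The paper simply asserts ``there are no non-trivial extensions between $(G^r_k)^\vee$ and $(G^r_{k'})^\vee$ for $k\neq k'$'' without further comment, so you are being more scrupulous than the paper here. Your proposed route via a Kapranov-model computation (or Serre duality) is reasonable; just note that Lemma~\ref{easy acyclic} as stated requires a negative $H$-coefficient, so the $j>k$ case (where $G_j-G_k$ has positive $H$-coefficient) will need the dual form or a direct induction on boundary restrictions using Lemma~\ref{restriction same}.
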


\bp[Proof of Claim \ref{claimAB}] 
We prove (B1)-(B3). From the commutative diagram
\begin{equation}\label{push1}
\begin{CD}
\LM_I\times X^r_{N\setminus I}=E_I    @>>>  X^r_N\\
@V{(\pi_i,Id)}VV        @VV{\pi_i}V\\
\LM_J\times X^r_{N\setminus I}=E_J     @>>>  X_{N\setminus i}^{r} 
\end{CD}
\end{equation}
it follows that 
$$R{\pi_i}_*\big(-\psi_x\boxtimes(G^r_{k-s})^\vee\big)=R{\pi_i}_*(-\psi_x)\boxtimes(G^r_{k-s})^\vee=0,$$
as $R{\pi_i}_*(-\psi_x)=0$. 
Hence, (\ref{comp3}) implies that $R{\pi_i}_*Q^k_{s+1}=0$, thus proving (B1). 
Note that it suffices to prove (B2) and (B3) for 
$1\leq s\leq \text{min}\{k,n\}$, as $F_n=F_{n+1}=\ldots=F_{a-1}$. 
Clearly, (B2) follows immediately from (B1), the exact sequences in (B) and the diagram (\ref{comp4}). 
We now prove (B3)  by induction on $s$ (for a fixed $k$). Denote
$$h_s: {\pi_i}^*{\pi_i}_*\big({H_{k+1}}_{|F_s}\big)\ra \big({H_{k+1}}_{|F_s}\big),\quad \cK_s=\Ker(h_s)$$ 

We use the following two observations: (1) 
For any sheaf $\cT$, the canonical map ${\pi_i}^*{\pi_i}_*(\cT)\ra \cT$ is non-zero whenever ${\pi_i}_*(\cT)$ is non-zero, and (2) If $F\subset X$ is an effective divisor and $\cL$ is a line bundle on $X$,  the only non-zero morphism $\cL\ra\cL_{|F}$ is 
the restriction map (with kernel $\cL(-F)$).

When $s=1$, we have from (B2) and  (\ref{comp4})  that 
$${\pi_i}_*\big({H_{k+1}}_{|F_1}\big)=(G^r_k)^\vee,\quad {\pi_i}^*{\pi_i}_*\big({H_{k+1}}_{|F_1}\big)= {\pi_i}^*(G^r_k)^\vee,$$
$${H_{k+1}}_{|F_1}=(G^r_k)^\vee=({{\pi_i}^*(G^r_k)}^\vee)_{|F_1}\quad\text{ on }F_1=E_i.$$
Hence, it follows from the observations (1) and (2) above that  $h_1$ is surjective and 
$\cK_1=\pi_i^*(G_k^r)^\vee\otimes\cO(-F_1)$. 
Assume now that $h_s$ is surjective and $\cK_s=\pi_i^*(G_k^r)^\vee\otimes\cO(-F_s)$. 
By applying $\pi_i^*{\pi_i}_*(-)$ to the exact sequence 
\begin{equation}\label{Q}
0\ra Q^k_{s+1}\ra \big(H_{k+1}\big)_{|F_{s+1}}\ra \big(H_{k+1}\big)_{|F_s}\ra0,
\end{equation}
it follows from (B1) that there is a commutative diagram:
\begin{equation*}
\begin{CD}
0 @>>> 0   @>>>  {\pi_i}^*{\pi_i}_*\big({H_{k+1}}_{|F_{s+1}}\big)@>>>  {\pi_i}^*{\pi_i}_*\big({H_{k+1}}_{|F_{s}}\big)@>>> 0\\
  @.                 @VVV      @VV{h_{s+1}}V   @VV{h_{s}}V\\
0 @>>>  Q^k_{s+1}    @>>>  \big({H_{k+1}}_{|F_{s+1}}\big)@>>> \big({H_{k+1}}_{|F_{s}}\big)@>>> 0
\end{CD}
\end{equation*}
By our inductive assumption, $h_s$ is surjective. By the snake lemma, there is an exact sequence
$$0\ra \cK_{s+1}\ra\cK_s\ra Q^k_{s+1}\ra \Coker(h_{s+1})\ra0.$$
The induced map $\cK_s\ra Q^k_{s+1}$ is non-zero. Otherwise, $Q^k_{s+1}\cong\Coker(h_{s+1})$, which implies that
the exact sequence  (\ref{Q}) is split, since there is a retract $ \big({H_{k+1}}_{|F_{s+1}}\big)\ra Q^k_{s+1}$. But the sequence 
(\ref{Q}) is obtained by tensoring the canonical sequence (\ref{canonical}) with a line bundle, and (\ref{canonical}) is not split, as there are no 
non-zero morphisms $\cO_{F_{s+1}}\ra\cO_{E^{s+1}}(-F_s)$:
$$\Hom\big(\cO_{F_{s+1}}, \cO_{E^{s+1}}(-F_s)\big)=\HH^0\big(\cO_{E^{s+1}}(-F_s)\big)=0,$$
by (\ref{comp2}), and we have a contradiction. 
By the induction assumption, we have $\cK_s=\pi_i^*(G_k^r)^\vee\otimes\cO(-F_s)$. By (\ref{comp3}), 
we have that 
$$Q^k_{s+1}=\big(\pi_i^*(G_k^r)^\vee\otimes\cO(-F_s)\big)_{|E^{s+1}}.$$ 
Hence, $Q^k_{s+1}=(\cK_s)_{|E^{s+1}}$. By observation (2), the map $\cK_s\ra Q^k_{s+1}$ is surjective, i.e., 
 $\Coker(h_{s+1})=0$ and furthermore, 
$$\cK_{s+1}=\cK_s(-E^{s+1})=\pi_i^*(G_k^r)^\vee\otimes\cO(-F_s-E^{s+1})=\pi_i^*(G_k^r)^\vee\otimes\cO(-F_{s+1}),$$
This proves the first statement in (B3). 
In particular, this gives
$$\Cone\left [
{L\pi_i}^*{R\pi_i}_*\big({H_{k+1}}_{|F_k}\big)\ra \big({H_{k+1}}_{|F_k}\big)\right]=\big(\pi_i^*(G_k^r)^\vee\otimes\cO(-F_k)\big)[1].$$
and now the last statement in (B3) follows from
$${G_{k}^{r}}^\vee=\pi_i^*(G_k^r)^\vee\otimes\cO(-F_k).$$  

We now prove (A1) and (A2).  Apply ${\pi_i}_*(-)$ to the exact sequences in (A). 
Then (A1) follows from (B2) and downward induction, using the fact that there are no non-trivial extensions 
between $(G^r_k)^\vee$ and $(G^r_{k'})^\vee$ for $k\neq k'$.  Similarly, to prove (A2), we use downward induction on 
$1\leq k\leq a$ and the exact sequences in (A). As $H_a=(G^r_a)^\vee$, we have
$$\Cone\left [{L\pi_i}^*{R\pi_i}_*\big(H_a\big)\ra \big(H_a\big)\right]=(G_a^{r})^\vee.$$
Note that if  $\pi:X\ra Y$ is a  morphism between smooth projective varieties and $0\ra A_1\ra A_2\ra A_3\ra 0$ is an exact sequence of sheaves on $X$,
there is a distinguished triangle relating the cones $C_i=\Cone\left [{L\pi}^*{R\pi}_*{A_i}\ra {A_i}\right]$:
$$C_1\ra C_2\ra C_3\ra C_1[1]$$
Then (A2) follows from (B3) by using the fact that there are no non-trivial extensions between ${G^r_k}^\vee$ and ${G^r_{k'}}^\vee$ for $k\neq k'$.
\ep

\begin{claim}\label{comps}
For all subsets $I\subseteq N$ with $i\in I$, $|I|=s+1$, $1\leq s\leq n-1$, we have 
on $E_I\cong\LM_I\times X^r_{N\setminus I}$
\begin{equation}\label{comp2}
{F_s}_{|E_I}=\psi_x\boxtimes\cO,
\end{equation}
(here $x$ is the attaching point). 
Assume now that $1\leq k\leq a-1$ and $1\leq s\leq \text{min}\{k-1,n-1\}$. Then
\begin{equation}\label{comp1}
{H_{k+1}}_{|E_I}=\cO\boxtimes (G^r_{k-s})^\vee=\big(\pi_i^*(G^r_k)^\vee\big)_{|E_I},
\end{equation}
Hence, we have 
$$\big(H_{k+1}-F_s\big)_{|E_I}=(-\psi_x)\boxtimes (G^r_{k-s})^\vee=\big(\pi_i^*(G^r_k)^\vee\otimes\cO(-F_s)\big)_{|E_I},$$
\begin{equation}\label{comp3}
Q^k_{s+1}=\bigoplus_{i\in I\subseteq N, |I|=s+1}(-\psi_x)\boxtimes (G^r_{k-s})^\vee=\big(\pi_i^*(G^r_k)^\vee\otimes\cO(-F_s)\big)_{|E^{s+1}}.
\end{equation}
Moreover, on $E_i\cong X^r_{N\setminus\{i\}}$ we have 
\begin{equation}\label{comp4}
{H_{k+1}}_{|E_i}=(G^r_k)^\vee.
\end{equation}
\end{claim}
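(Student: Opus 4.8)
The plan is to reduce every one of the four formulas to a computation in the Picard group of the exceptional divisor $E_I$, carried out through the product decomposition $E_I\cong\LM_I\times X^r_{N\setminus I}$. The three ingredients are: the intersection rules for exceptional divisors of the iterated blow-up (the restriction $E_J|_{E_I}$ lands in the $\LM_I$-factor when $J\subsetneq I$, is the self-intersection $\cO_{E_I}(E_I)$ when $J=I$, lands in the $X^r_{N\setminus I}$-factor when $I\subsetneq J$, and vanishes when $I,J$ are non-nested); the restriction formula of Lemma~\ref{restriction same} for the line bundles $G^r_a$; and the commutative square \eqref{push1}, which identifies $\pi_i|_{E_I}$ with the map $(\pi_i,\mathrm{Id})$ onto $E_{I\setminus i}\cong\LM_{I\setminus i}\times X^r_{N\setminus I}$.

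I would begin with \eqref{comp2}. Among the divisors $E_J$ occurring in $F_s$ (those with $i\in J$ and $|J|\le s$), the only ones meeting $E_I$ are the nested ones; since $|J|\le s<s+1=|I|$ these are exactly the proper subsets $J\subsetneq I$ containing $i$, while $J=I$ and $J\supsetneq I$ are excluded from $F_s$. Hence $F_s|_{E_I}=\sum_{i\in J\subsetneq I}E_J|_{E_I}$ is supported on the $\LM_I$-factor, and it remains to recognise this sum of boundary divisors as the class $\psi_x$ of the attaching point $x$. This is a direct check on the Losev--Manin factor, i.e. the standard expression of a $\psi$-class as a sum of boundary divisors, and can be obtained inductively from the restriction pattern of Lemma~\ref{G}.

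For \eqref{comp1} the second equality $(\pi_i^*(G^r_k)^\vee)|_{E_I}=\cO\boxtimes(G^r_{k-s})^\vee$ is immediate: by \eqref{push1} the restriction of $\pi_i$ to $E_I$ is $(\pi_i,\mathrm{Id})$ onto $E_{I\setminus i}\cong\LM_{I\setminus i}\times X^r_{N\setminus I}$, and since $|I\setminus i|=s<k$, Lemma~\ref{restriction same} gives $(G^r_k)^\vee|_{E_{I\setminus i}}=\cO\boxtimes(G^r_{k-s})^\vee$, which pulls back unchanged. For the first equality I would expand $H_{k+1}=(G^r_a)^\vee-\sum_{s'=k+1}^{a-1}F_{s'}$ and restrict. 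Because $s+1<a$, the term $(G^r_a)^\vee|_{E_I}=\cO\boxtimes(G^r_{a-s-1})^\vee$ sits in the second factor by Lemma~\ref{restriction same}; and for each $F_{s'}$ (with $s'\ge k+1\ge s+2$) the $\LM_I$-factor contribution is exactly the $\psi_x$ of \eqref{comp2}, which must cancel against the self-intersection term $\cO_{E_I}(E_I)$ that is also present since $s'\ge s+1$. The crux is therefore the normal-bundle identity $\cO_{E_I}(E_I)=(-\psi_x)\boxtimes\cO+\cO\boxtimes L_0$ for some line bundle $L_0$ on $X^r_{N\setminus I}$: once this is in hand, each $F_{s'}|_{E_I}$ lands entirely in the second factor, so $H_{k+1}|_{E_I}$ does too, and matching it against the already-computed $\cO\boxtimes(G^r_{k-s})^\vee$ completes \eqref{comp1}.

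The remaining formulas are corollaries. Subtracting \eqref{comp2} from \eqref{comp1} gives $(H_{k+1}-F_s)|_{E_I}=(-\psi_x)\boxtimes(G^r_{k-s})^\vee$, and summing over all $I$ with $i\in I$ and $|I|=s+1$ assembles $Q^k_{s+1}$ on $E^{s+1}$, which is \eqref{comp3}. Finally \eqref{comp4} is the degenerate case $s=0$, $I=\{i\}$ of \eqref{comp1}: here $\LM_{\{i\}}$ is a point and $\pi_i|_{E_i}$ is the identity of $X^r_{N\setminus i}$, so $H_{k+1}|_{E_i}=(G^r_k)^\vee$. The main obstacle throughout is the precise bookkeeping of which exceptional divisors restrict into which factor, together with the self-intersection computation of $\cO_{E_I}(E_I)$ that forces the $\psi_x$-contributions to cancel; once these are settled the rest is routine recombination via Lemma~\ref{restriction same}.
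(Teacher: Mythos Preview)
Your approach diverges from the paper's in the proof of \eqref{comp1}, and there is a genuine gap. The paper computes $H_{k+1}|_{E_I}$ by writing $H_{k+1}=H_1+F_1+\ldots+F_k$ (building up from $H_1=f_i^*(G_a^{r+1})^\vee$) and then computing \emph{both} the $\LM_I$-component $H'$ and the $X^r_{N\setminus I}$-component $H''$ explicitly in Kapranov coordinates; the verification that $H'=0$ is a nontrivial coefficient-by-coefficient check, and $H''=(G^r_{k-s})^\vee$ is read off directly because only the $E_K$ with $I\subseteq K$ contribute to $H''$ and $H_1$ contains none of these.

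You instead expand from the top, $H_{k+1}=(G^r_a)^\vee-\sum_{s'=k+1}^{a-1}F_{s'}$, and argue via the normal-bundle identity that each $F_{s'}|_{E_I}$ has trivial $\LM_I$-component. This is fine as far as it goes, but it only shows $H_{k+1}|_{E_I}=\cO\boxtimes M$ for \emph{some} $M$; it does not determine $M$. The phrase ``matching it against the already-computed $\cO\boxtimes(G^r_{k-s})^\vee$'' is not a proof: the second equality in \eqref{comp1} computes $(\pi_i^*(G^r_k)^\vee)|_{E_I}$, which is a \emph{different} line bundle on $X^r_N$, and showing that both restrictions land in the second factor does not show they agree. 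To finish your argument you would have to compute the $X^r_{N\setminus I}$-contributions of the self-intersection $L_0$ and of all $E_J$ with $I\subsetneq J$, $i\in J$, $|J|\le s'$ appearing in each $F_{s'}$, then sum and compare with $(G^r_{a-s-1})^\vee$. That is exactly the sort of explicit bookkeeping the paper performs (in the other direction), and it is not avoided by your reduction.

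Your treatment of \eqref{comp2} is essentially correct though the justification is sketchy; the paper establishes the identity $\sum_{i\in J\subsetneq I}\delta_{(I\setminus J)\cup\{x\}}=\psi_x$ by writing it in the $\psi_x$-Kapranov model of $\LM_I$ rather than by induction. Your derivations of \eqref{comp3} and \eqref{comp4} from \eqref{comp1} and \eqref{comp2} are fine and match the paper.
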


\bp
To prove  (\ref{comp2}), we let $I=J\cup\{i\}$. Then $|J|=s$. We have:
$$(F_s)_{|E_{I}}=\big(E_i+\sum_{j\in N\setminus i}E_{ij}+\ldots+\sum_{K\subseteq N\setminus i, |K|=s-1}E_K\big)_{|E_{I}}=$$
$$=\de_{J\cup\{x\}}+\sum_{j\in J}\de_{(J\setminus\{j\})\cup\{x\}}+\ldots+\sum_{j\in J}\de_{jx}.$$
(as divisors on $\LM_I$). 
Using the $\psi_x$ Kapranov model of $\LM_I$, one has
$$(F_s)_{|E_{I}}=\La_J+\sum_{j\in J}E_{J\setminus\{j\}}+\ldots+\sum_{j\in J}E_{j}=H.$$
Here $\La_J$  denotes the class of the proper transform in $\LM_I$ 
of the hyperplane in $\PP^s$ spanned by the points in $J$. This proves (\ref{comp2}). 

To see (\ref{comp1}) and (\ref{comp4}), recall that if $1\leq k\leq a-1$, then 
$$H_{k+1}=H_1+F_1+\ldots+F_k,$$
$$H_1=-aH+(a-1)\sum_{j\in N\setminus\{i\}} E_j+(a-2)\sum_{j,k\in N\setminus\{i\}} E_{jk}+\ldots+
(a-t)\sum_{K\subseteq N\setminus\{i\}, |K|=t} E_{K},$$
where $t=\text{min}\{a-1,n-1\}$. There are two cases to consider: 
\begin{itemize}
\item[(a) ] $k\leq n$ (with either $n\leq a-1$ or $a-1\leq n$),  
\item[(b) ] $k>n$, in which case, we must have $a-1>n$ and $t=n-1$. Note that we must have $r\geq2$, as $n+r-1\geq a-1>n$. 
\end{itemize}

In case (a), we have 
$$F_1+\ldots+F_k=kE_i+(k-1)\sum_{j\in N\setminus\{i\}} E_{ij}+\ldots+1\cdot \sum_{i\in K\subseteq N, |K|=k} E_K.$$

In case (b), we have
$$F_1+\ldots+F_k=kE_i+(k-1)\sum_{j\in N\setminus\{i\}} E_{ij}+\ldots$$
$$\ldots+(k-n+2)\sum_{i\in K\subseteq N, |K|=n-1} E_K+
(k-n+1)E_N.$$

Let now $1\leq s\leq \text{min}\{k-1,n-1\}$ and let $I\subseteq N$, with $i\in I$ and $|I|=s+1$, for some $0\leq s\leq k-1$. 
Let $$\cO(H_{k+1})_{|E_I}=\cO(H')\boxtimes \cO(H''),$$
where $H'$ is the component on $\LM_I$ and $H''$ is the component on $X^r_{N\setminus I}$.  
We now compute $H'$ and $H''$. Note that only the divisors $E_K$ with $I\subseteq K\subseteq N$ 
contribute to $H''$. For example, $H_1$ does not, i.e., we have:
$$H''=\big(F_1+\ldots+F_k\big)_{|E_I}.$$
In case (a) we have:
$$H''=(k-s)(-\psi_x)+(k-s-1)\sum_{k\in N\setminus I} \de_{k,x}+$$
$$+(k-s-2)\sum_{k,l\in N\setminus I} \de_{k,l,x}+\ldots+1\cdot\sum_{K\subseteq N\setminus I, |K|=k-s-1} \de_{K\cup\{x\}}.$$
 (as a divisor on $\M_{0,N\setminus I}$), while in case (b), we have:
 $$H''=(k-s)(-\psi_x)+(k-s-1)\sum_{k\in N\setminus I} \de_{k,x}+$$
$$+(k-s-2)\sum_{k,l\in N\setminus I} \de_{k,l,x}+\ldots+(k+1-n)\de_{(N\setminus I)\cup\{x\}}.$$
In both cases $H''=(G^r_{k-s})^\vee$, as by the definition of $G^r_{k-s}$ on $X^r_{N\setminus I}$,
$$(G^r_{k-s})^\vee=(k-s)H-(k-s-1)\sum_{j\in N\setminus I} E_j-\ldots-(k-s-t')\sum_{K\subseteq N\setminus I, |K|=t'} E_K,$$
(in the Kapranov model given by $\psi_x$),
where $t'=\text{min}\{k-s-1,n-s-1\}$, i.e., $t'=k-s-1$ in case (a), and $t'=n-s-1$ in case (b) 

We now calculate $H'$. Let $I=J\cup\{i\}$. Since $|J|=s\leq\text{min}\{k-1,n-1\}$, we have 
$s \leq t=\text{min}\{a-1,n-1\}$. Using the $\psi_0$ Kapranov model of $\LM_I$, we obtain that the contribution from $H_1$ to $H'$
comes from
$$-aH+(a-1)\sum_{j\in J} E_j+\ldots+(a-(s-1))\sum_{K\subseteq J, |K|=s-1} E_{K}+(a-s)E_J,$$
and equals
$$-aH+(a-1)\sum_{j\in J} E_j+\ldots+(a-(s-1))\sum_{K\subseteq J, |K|=s-1} E_{K}+(a-s)\La_J,$$
while the contribution from $F_1+\ldots+F_k$ to $H'$ comes from
$$kE_i+(k-1)\sum_{j\in J} E_{ij}+\ldots+(k-(s-2))\sum_{K\subseteq J, |K|=s-2} E_{K\cup\{i\}}+$$
$$+(k-(s-1))\sum_{K\subseteq J, |K|=s-1} E_{K\cup\{i\}}+(k-s)E_J,$$
and equals
$$kE_i+(k-1)\sum_{j\in J} E_{ij}+\ldots+(k-(s-2))\sum_{K\subseteq J, |K|=s-2} E_{K\cup\{i\}}+$$
$$+(k-(s-1))\sum_{K\subseteq J, |K|=s-1} \La_{K\cup\{i\}}+(k-s)(-\psi_x).$$
Here $\La_S$ (for $S\subseteq I$ with $|S|=s$) denotes the class of the proper transform in $\LM_I$ 
of the hyperplane in $\PP^s$ spanned by the points in $S$, i.e., 
$$\La_S=H-\sum_{K\subseteq S, 1\leq |K|\leq s-1} E_K.$$ 
We now sum up these two terms and compute the coefficient of $H$ to be:
$$-a+(a-s)+(k-s+1)s-(k-s)s=0.$$
Here we use that on $\LM_I$, the class of $\psi_x$ in the $\psi_0$ Kapranov model is 
$$\psi_x=sH-(s-1)\sum_{j\in I}-(s-2)\sum_{j,k\in I}-\ldots.$$
Similarly, the coefficient of $E_K$, for $K\subseteq J$, $|K|=l$ is:
$$(a-l)-(a-s)-(k-s+1)(s-l)+(k-s)(s-l)=0,$$
while the coefficient of $E_{K\cup\{i\}}$, for $K\subseteq J$, $|K|=l$ is:
$$(k-l)-(k-s+1)(s-l)+(k-s)(s-l-1)=0.$$
Hence, $H'=0$ and $\cO(H_{k+1})_{|E_I}=\cO\boxtimes (G^r_{k-s})^\vee$. 

To see $\big(\pi_i^*(G^r_k)^\vee\big)_{|E_I}=\cO\boxtimes (G^r_{k-s})^\vee$, 
we use the commutative diagram (\ref{push1}). Note that the line bundle 
${(G^r_k)^\vee}_{|E_J}=\cO\boxtimes (G^r_{k-s})^\vee$ (Remark \ref{restriction same}).
This finishes the proof of  (\ref{comp1}). The case when $E_I=E_i$ corresponds to the case $s=0$, and the above computation shows (\ref{comp4}). 
Clearly, (\ref{comp3}) follows from (\ref{comp1}) and (\ref{comp2}). 
\ep

To prove the general case of Lemma \ref{crucial}, we need the following:
\begin{lemma}\label{cuspidal}
If $\pi_i: X_N^r\ra X_{N\setminus i}^r$ is the forgetful map, for all $1\leq a\leq n+r-1$, the line bundle $\pi_i^*({G^r_a}^\vee)\otimes\cO(-E_i)$ 
belongs to the subcategory generated by $\hat \bG_N^{r}$.
\end{lemma}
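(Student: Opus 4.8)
The plan is to reduce the statement to an explicit line bundle identity and then to peel that line bundle apart by a filtration whose graded pieces are recognizably members of $\hat\bG_N^r$, reusing the restriction computations already assembled in Claim~\ref{comps}.

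First I would compute the class of the line bundle. Using the forgetful-map pullback formulas in the Kapranov model, $\pi_i^*H=H-E_i$ and $\pi_i^*E_K=E_K+E_{K\cup i}$ for $i\notin K$, a direct calculation entirely parallel to the one proving $Lf_i^*{G^{r+1}_a}^\vee={G^{r+1}_a}^\vee+F$ in Lemma~\ref{axfbafbf} gives
$$\pi_i^*({G^r_a}^\vee)\otimes\cO(-E_i)={G^r_a}^\vee\otimes\cO(\tilde F),\qquad \tilde F=\sum_{i\in J,\ 2\le|J|\le a}E_J.$$
In the notation of the proof of Lemma~\ref{basic} this is precisely $H_{a+1}\otimes\cO(-E_i)$, the natural continuation of the filtration $H_1,\dots,H_a={G^r_a}^\vee$ with the spurious divisor $E_i$ removed. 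As a sanity check one has $R{\pi_i}_*\big(\pi_i^*{G^r_a}^\vee\otimes\cO(-E_i)\big)={G^r_a}^\vee\otimes R{\pi_i}_*\cO(-E_i)=0$, since $\cO(-E_i)$ has relative degree $-1$ on the $\bP^1$-fibres of $\pi_i$; this is the cuspidality that must hold, given that $\langle\hat\bG_N^r\rangle\subseteq D^b_{cusp}(X_N^r)$.

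Next I would build a filtration of ${G^r_a}^\vee\otimes\cO(\tilde F)$ by adjoining the divisors $E_J$ ($i\in J$, $|J|\ge 2$) in blocks of increasing cardinality, using the tautological sequences $0\to L\to L(E_J)\to L(E_J)|_{E_J}\to 0$ together with the secondary sequences \eqref{canonical} of Claim~\ref{comps}. The bottom of the filtration is ${G^r_a}^\vee$, one of the line-bundle generators of $\hat\bG_N^r$. Each graded piece is a torsion sheaf on some $E_J\cong\LM_J\times X^r_{N\setminus J}$, and I would identify it using Lemma~\ref{restriction same} and the same $\psi_x$-bookkeeping that produced the box products $(-\psi_x)\boxtimes(G^r_{k-s})^\vee$ in \eqref{comp3}. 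The desired outcome is that every graded piece is $(i_{E_J})_*\!\big(G_b^\vee\boxtimes{G^r_c}^\vee\big)$ with $1\le b\le|J|-1$ and $1\le c\le|N\setminus J|+r$, i.e.\ a cuspidal box product of a generator of $\hat\bG_J$ on the first factor and a generator of $\hat\bG^r_{N\setminus J}$ on the second — which is exactly a torsion generator of $\hat\bG_N^r$. Since $\langle\hat\bG_N^r\rangle$ is triangulated, closed under the extensions of the filtration, the total object then lies in it.

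The main obstacle is precisely the identification of the graded pieces at the \emph{extreme} strata, where naively one of the two box factors threatens to be trivial (hence non-cuspidal, hence outside $\langle\hat\bG_N^r\rangle$): for $|J|=a$ the restriction of ${G^r_a}^\vee$ alone carries only $\cO$ on the $X^r_{N\setminus J}$-factor. The whole point of the computation is to check that, once the contributions of the sub-divisors $E_{J'}$ ($J'\subseteq J$) and of the normal-bundle term $E_J|_{E_J}$ are accounted for uniformly — exactly as in \eqref{comp2}–\eqref{comp3} and using Lemma~\ref{-G2} — each graded piece comes out as a genuine cuspidal $G_b^\vee\boxtimes{G^r_c}^\vee$ with both indices in range, with no leftover $\cO$-factor. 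This normal-bundle/$\psi$-class accounting, which must be organized with the correct signs (matching the $H_{k+1}-F_s$ structure of Claim~\ref{comps} rather than a naive additive peeling), is the technical heart; granting it, membership of every piece, and hence of $\pi_i^*({G^r_a}^\vee)\otimes\cO(-E_i)$, in $\langle\hat\bG_N^r\rangle$ is immediate, and this feeds directly into the general case of Lemma~\ref{crucial}.
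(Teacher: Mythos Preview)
Your approach is the paper's: the same filtration interpolating between $\pi_i^*({G^r_a}^\vee)\otimes\cO(-E_i)$ and $(G^r_a)^\vee$ by peeling off one layer $E^{s+1}=\sum_{i\in I,\,|I|=s+1}E_I$ at a time, with graded pieces supported on the $E_{J\cup\{i\}}$. The only difference is bookkeeping, and it matters: the paper parametrizes the steps top-down as $L^s=\pi_i^*({G^r_a}^\vee)-F_s$ (so $L^1$ is the target, $L^{t+1}=(G^r_a)^\vee$), rather than bottom-up from $(G^r_a)^\vee$ as you do. In that parametrization the restriction is a one-liner from \eqref{comp1} and \eqref{comp2}: $(L^s)|_{E_{J\cup\{i\}}}=(\pi_i^*{G^r_a}^\vee)|_{E_{J\cup\{i\}}}\otimes\cO(-F_s)|_{E_{J\cup\{i\}}}=(-\psi_x)\boxtimes(G^r_{a-s})^\vee$, uniformly for all $1\le s\le t=\min(a-1,n-1)$. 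Both box factors are in range ($-\psi_x=G_s^\vee$ on $\LM_{J\cup\{i\}}$ with $|J\cup\{i\}|=s+1\ge2$, and $1\le a-s\le |N\setminus(J\cup\{i\})|+r$ by the hypothesis $a\le n+r-1$), so every graded piece is literally a generator of $\hat\bG_N^r$. The ``main obstacle'' you flag --- a trivial $\cO$-factor at the extreme stratum --- is an artifact of anchoring at $(G^r_a)^\vee$ and never arises when you anchor at $\pi_i^*({G^r_a}^\vee)$ instead; there is nothing to grant. (Your identification with ``$H_{a+1}\otimes\cO(-E_i)$'' is off: the $H_k$ of Lemma~\ref{basic} stop at $H_a$.)
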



\bp
Let $t=\text{min}\{a-1,n-1\}$. Keeping the notations in the proof of Lemma \ref{basic}, 
consider the divisors $F_s$ on $X_N^r$, for  $1\leq s\leq t+1$:
$$F_s=E^1+E^2+\ldots+E^{s}=E_i+\sum_j E_{ij}+\sum_{j,k} E_{ijk}+\ldots+\sum_{i\in I\subseteq N, |I|=s}E_I,$$
and let 
$$L^1=\pi_i^*({G^r_a}^\vee)-F_1,\quad L^2=\pi_i^*({G^r_a}^\vee)-F_2,\quad\ldots\quad, L^{t+1}=\pi_i^*({G^r_a}^\vee)-F_{t+1}.$$

We claim that $\pi_i^*({G^r_a}^\vee)-F_{t+1}=(G_a^{r})^\vee$. This is clear if one considers separately the two cases, $a\leq n$ and $a>n$. 
For example, if $a>n$ then 
$$(G_a^{r})^\vee=-aH+(a-1)\sum_{j\in N}+\ldots+(a-n)E_N=\pi_i^*{(G^r_a)^\vee}-F_n.$$

We have to prove that $L^1$ belongs to the subcategory generated by $\hat \bG_N^{r}$. We use the exact sequences
$$0\ra L^2\ra L^1\ra\bigoplus_{j\in N\setminus i}(L^1)_{|E_{ij}}\ra0,$$
$$0\ra L^3\ra L^2\ra\bigoplus_{j,k\in N\setminus i}(L^2)_{|E_{ijk}}\ra0,$$
$$\vdots$$
$$0\ra L^{t+1}\ra L^t\ra\bigoplus_{J\subseteq N\setminus\{i\}, |J|=t}(L^t)_{|E_{J\cup\{i\}}}\ra0.$$
Clearly, it is enough to prove that  the sheaves $(L^s)_{|E_{J\cup\{i\}}}$, for all $1\leq s\leq t$ and 
$J\subseteq N\setminus\{i\}$, $|J|=s$, are in the subcategory generated by $\hat \bG_N^{r}$. 
Note that $E_{J\cup\{i\}}$ is a massive stratum in $X^r_N$, as 
$$|J\cup\{i\}|=s+1\geq2,\quad |N\setminus J|+r=n-s+r>0,$$ 
since $s\leq t\leq n-1$ and when $r=-1$, we have $t=a-1$ and $a\leq n-1$. 

As in (\ref{comp1}), we have
$\big(\pi_i^*{G^r_a}^\vee\big)_{|E_{J\cup\{i\}}}=\cO\boxtimes (G^r_{a-s})^\vee,$
while by (\ref{comp2}), we have 
$\cO(-F_s)_{|E_{J\cup\{i\}}}=(-\psi_x)\boxtimes\cO$. 
It follows that $(L^s)_{|E_{J\cup\{i\}}}$ is one of the objects in $\hat \bG_N^{r}$, as it
equals $(-\psi_x)\boxtimes (G^r_{a-s})^\vee$.
\ep

\bp[Proof of Lemma \ref{crucial}]
Consider the case when $\cT$ is a torsion sheaf. Let
$$\cT=(i_{Z})_*\cL,\quad \cL=G_{a_1}^\vee\boxtimes\ldots\boxtimes G_{a_{l-1}}^\vee\boxtimes {G^{r+1}_{a_l}}^\vee,$$
where $Z=Z_{N_1, N_2,\ldots, N_{l}}$ is the massive stratum in $X^{r+1}_{N\setminus\{i\}}$ corresponding to a partition $N_1\sqcup\ldots\sqcup N_l$ 
of $N\setminus\{i\}$. Since $Z$ is massive, we have $|N_t|\geq2$ for every $1\leq t\leq l-1$ and $|N_l|+r+1>0$. 
The preimage $Z'=f_i^{-1}(Z)$ is a massive stratum in $X^r_N$ and there is a commutative diagram:
\begin{equation*}
\begin{CD}
Z'=f_i^{-1}(Z') @>{i_{Z'}}>>  X_N^r  \\
@V{Id\times f^{N_l}_i}VV        @VV{f_i}V\\
Z  @>{i_{Z}}>>   X^{r+1}_{N\setminus i}
\end{CD}
\end{equation*}
where $i_{Z'}$ and $i_Z$ are the canonical inclusions and we identify 
$$Z=\LM_{N_1}\times\ldots\times\LM_{N_{l-1}}\times X^{r+1}_{N_l},$$
$$Z'=\LM_{N_1}\times\ldots\times\LM_{N_{l-1}}\times X^r_{N_l\cup\{i\}},$$
and $f_i^{N_l}$ denotes the blow-up map $X^r_{N_l\cup\{i\}}\ra X^{r+1}_{N_l}$ (we write $f_i$ whenever there is no risk of confusion). 
Let $\cT'=Lf_i^*\cT'$. Then 
$$\cT'=(i_{Z'})_*\cL',\quad \cL'=(Id\times f_i)^*\cL'=G_{a_1}^\vee\boxtimes\ldots\boxtimes G_{a_{l-1}}^\vee\boxtimes f_i^*{{G^{r+1}_{a_l}}^\vee}.$$

We compute $\Cone\left [{L\pi_i}^*{R\pi_i}_*\cT'\to \cT'\right]$ by the exact same argument as in the proof of Lemma \ref{basic}. 
We define divisors $H_1,H_2,\ldots, H_{a_l}$ on $X^r_{N_l\cup\{i\}}$ exactly as before,  such that we have
$$H_1= f_i^*({G^{r+1}_{a_l})^\vee},\quad  H_{a_l}=(G_{a_l}^{r})^\vee.$$

Considering on $X_{N_l\cup\{i\}}^r$ the exact sequences (A) in the proof of Lemma  \ref{basic}, after taking the box product with 
$G_{a_1}^\vee\boxtimes\ldots\boxtimes G_{a_{l-1}}^\vee$, one obtains exact sequences on $Z'$. It is enough to prove 
that for all $1\leq k\leq a_l-1$, $\Cone\left [{L\pi_i}^*{R\pi_i}_*\cT_k\to \cT_k\right]$ is in the subcategory generated by 
$\hat \bG_N^{r}$, where 
$$\cT_k=(i_{Z'})_*\big(G_{a_1}^\vee\boxtimes\ldots\boxtimes G_{a_{l-1}}^\vee\boxtimes ({H_{k+1}}_{| F_k)}\big).$$
We consider on $X_{N_l\cup\{i\}}^r$ the exact sequences (B) in the proof of Lemma  \ref{basic}, after taking the box product with 
$G_{a_1}^\vee\boxtimes\ldots\boxtimes G_{a_{l-1}}^\vee$. Let 
$$\cT_{k,s}=(i_{Z'})_*\big(G_{a_1}^\vee\boxtimes\ldots\boxtimes G_{a_{l-1}}^\vee\boxtimes Q^k_{s+1}\big),$$ 
$$\tilde{\cT}_k=(i_{Z'})_*\big(G_{a_1}^\vee\boxtimes\ldots\boxtimes G_{a_{l-1}}^\vee\boxtimes ({H_{k+1}}_{|F_1})\big).$$ 
Then $\Cone\left [{L\pi_i}^*{R\pi_i}_*\cT_k\to \cT_k\right]$ is in the subcategory generated by 
$\hat \bG_N^{r}$ if and only if $\Cone\left [{L\pi_i}^*{R\pi_i}_*\cT_{k,s}\to \cT_{k,s}\right]$ and  
$\Cone\left [{L\pi_i}^*{R\pi_i}_*\tilde{\cT}_k\to \tilde{\cT}_k\right]$ are in the subcategory generated by 
$\hat \bG_N^{r}$. By (\ref{comp3}), the sheaf $Q^k_{s+1}$ is a direct sum of objects in $\hat \bG^r_{N_l\cup\{i\}}$. 
Hence, $\cT_{k,s}$ is a direct sum of objects in 
$\hat \bG_N^{r}$. In particular, $\Cone\left [{L\pi_i}^*{R\pi_i}_*\cT_{k,s}\to \cT_{k,s}\right]=\cT_{k,s}$. 
We are left to prove that 
$$\Cone\left [{L\pi_i}^*{R\pi_i}_*\tilde{\cT}_k\to \tilde{\cT}_k\right]$$  is in the subcategory generated by 
$\hat \bG_N^{r}$. 

For simplicity, denote $\tilde{\cT}=\tilde{\cT}_k$. 
Let $\bZ:=\pi_i(Z')$. We identify
$$\bZ=\LM_{N_1}\times\ldots\times\LM_{N_{l-1}}\times X^r_{N_l}.$$ 
Then $\pi_i^{-1}(\bZ)=Z^1\cup\ldots\cup Z^l$, where 
$$Z^l=Z'=\LM_{N_1}\times\ldots\times\LM_{N_{l-1}}\times X^r_{N_l\cup\{i\}},$$ 
$$Z^t=\LM_{N_1}\times\ldots\times\LM_{N_t\cup\{i\}}\times\ldots\times\LM_{N_{l-1}}\times X^r_{N_l}, \quad  1\leq t\leq l-1.$$
As the divisor $E_i$ in $X_{N_l\cup\{i\}}^r$ can be identified with $\LM_{\{i\}}\times X^r_{N_l}$, 
the sheaf $\tilde{\cT}$ is supported on the non-massive stratum
$$Z^l\cap Z^{l-1}=\LM_{N_1}\times\ldots\times\LM_{N_{l-1}}\times\LM_{\{i\}}\times X^r_{N_l},$$
$$\tilde{\cT}=(i_{Z^l\cap Z^{l-1}})_*\cM,\quad \cM=G_{a_1}^\vee\boxtimes\ldots\boxtimes G_{a_{l-1}}^\vee\boxtimes\cO\boxtimes {G^r_k}^\vee,$$ 
where $i_{Z^l\cap Z^{l-1}}: Z^l\cap Z^{l-1}\ra X^r_N$ is the canonical inclusion. Denote 
$$v: Z^l\cap Z^{l-1}\ra {\pi_i}^{-1}(\bZ),\quad u: {\pi_i}^{-1}(\bZ)\ra X^r_N$$
the canonical inclusions. Then $i_{Z^l\cap Z^{l-1}}=u\circ v$. Denote $\rho={\pi_i}_{|{\pi_i}^{-1}(\bZ)}$. 
There is a commutative diagram
\begin{equation*}
\begin{CD}
{\pi_i}^{-1}(\bZ)=Z^1\cup\ldots\cup Z^l @>u>>  X_N^r  \\
@V{\rho}VV        @VV{\pi_i}V\\
\bZ  @>{i_{\bZ}}>>   X^r_{N\setminus i}
\end{CD}
\end{equation*}
The restriction maps ${\rho}_{|Z^t}: Z^t\ra\bZ$ are induced by the forgetful maps $\LM_{N_t\cup\{i\}}\ra \LM_{N_t}$ if $t<l$ and $X^r_{N_l\cup\{i\}}\ra X^r_{N_l}$ for $t=l$. 
Note that the restriction map ${\rho}_{|Z^l\cap Z^{l-1}}: Z^l\cap Z^{l-1}\ra\bZ$ is an isomorphism. Denote
$$\overline{\cM}=R\rho_*(Rv_*\cM)=G_{a_1}^\vee\boxtimes\ldots\boxtimes G_{a_{l-1}}^\vee\boxtimes {G^r_k}^\vee.$$
For all $1\leq t\leq l-1$ we have
\begin{equation}\label{pullback}
(\rho^*\overline{\cM})_{|Z^t}=G_{a_1}^\vee\boxtimes\ldots\boxtimes \pi_i^*({G^r_{a_t}}^\vee)\ldots\boxtimes G_{a_{l-1}}^\vee\boxtimes {G^r_k}^\vee.
\end{equation}
while 
\begin{equation}\label{pullback2}
(\rho^*\overline{\cM})_{|Z^l}=G_{a_1}^\vee\boxtimes\ldots\boxtimes {G^r_{a_t}}^\vee\ldots\boxtimes G_{a_{l-1}}^\vee\boxtimes \pi_i^*({G^r_k}^\vee).
\end{equation}

The strata $Z^1,\ldots, Z^l$ intersect: if $t<s$, then $Z_t\cap Z_s\neq\emptyset$ if and only if $s=t+1$. 
There are exact sequences:
$$0\ra \cO_{Z^1\cup\ldots\cup Z^{l-1}}(-Z^l)\ra\cO_{Z^1\cup\ldots\cup Z^l}\ra\cO_{Z^l}\ra 0,$$
$$0\ra \cO_{Z^1\cup\ldots\cup Z^{l-2}}(-Z^{l-1})\ra\cO_{Z^1\cup\ldots\cup Z^{l-1}}(-Z^l)\ra\cO_{Z^{l-1}}(-Z^l)\ra 0,$$
$$\vdots$$
$$0\ra \cO_{Z^1}(-Z^2)\ra\cO_{Z^1\cup Z^2}(-Z^3)\ra\cO_{Z^2}(-Z^3)\ra 0.$$

We also consider the exact sequence:
$$0\ra \cO_{Z^l}(-Z^{l-1})\ra\cO_{Z^l}\ra\cO_{Z^l\cap Z^{l-1}}\ra 0.$$

We tensor all the above sequences with $\rho^*\overline{\cM}$. If we denote
$$\cN^t=\rho^*\overline{\cM}\otimes\cO_{Z^t}(-Z^{t+1}) \quad (1\leq t\leq l-1),\quad \cN^0=\rho^*\overline{\cM}\otimes\cO_{Z^l}(-Z^{l-1}),$$
$$\cF^t=\rho^*\overline{\cM}\otimes\cO_{Z^1\cup\ldots\cup Z^t}(-Z^{t+1})\quad (1\leq t\leq l-1),$$
we have exact sequences on $Z^1\cup\ldots\cup Z^l$:
$$0\ra\cF^{l-2}\ra \rho^*\overline{\cM}\ra (\rho^*\overline{\cM})_{|Z^l}\ra0,$$
$$0\ra\cF^{l-3}\ra\cF^{l-2}\ra\cN^{l-1}\ra0,$$
$$0\ra\cF^{l-4}\ra\cF^{l-3}\ra\cN^{l-2}\ra0,$$
$$\vdots$$
$$0\ra\cF^1=\cN^1\ra\cF^2\ra\cN^2\ra0,$$
and furthermore,
$$0\ra\cN^0\ra(\rho^*\overline{\cM})_{|Z^l}\ra v_*\cM\ra0,$$

Consider the push-forwards via $u_*(-)$ to  to $X^r_N$  of all of the above exact sequences. 
Recall that $\tilde{\cT}=u_*(v_*\cM)$. 
To prove that  $\Cone\left [{L\pi_i}^*{R\pi_i}_*\tilde{\cT}\to \tilde{\cT}\right]$ is in the subcategory generated by 
$\hat \bG_N^{r}$, it suffices to prove that for $\cN$ among 
$$\rho^*\overline{\cM},\quad \cN^1,\quad\ldots\quad,\quad\cN^{l-1},\quad \cN^0,$$ 
we have that $\Cone\left [{L\pi_i}^*{R\pi_i}_*(u_*\cN)\to (u_*\cN)\right]$ is in the subcategory generated by 
$\hat \bG_N^{r}$.  This is clear for $\rho^*\overline{\cM}$, as $u_*\rho^*\overline{\cM}=\pi_i^*{i_{\bZ}}_*\overline{\cM}$ 
(since $\pi_i$ is flat) and we have $\Cone\left [{L\pi_i}^*{R\pi_i}_*L\pi_i^*A\to L\pi_i^*A\right]=0$ for any $A$. 
As 
$$\cO_{Z^t}(-Z^{t+1})=\cO_{\LM_{N_1}}\boxtimes\ldots\boxtimes\cO_{\LM_{N_t}}(-\de_{i,y})\boxtimes\ldots\boxtimes\cO_{X^r_{N_l\cup\{i\}}}$$ 
where $y$ is one of the attaching points of $\LM_{N_t}$, using (\ref{pullback}) and Lemma \ref{cuspidal}, it follows that 
$u_*\cN^t$ is in the subcategory generated by $\hat \bG_N^{r}$. In particular,  ${R\pi_i}_*(u_*\cN^t)=0$ and 
$\Cone\left [{L\pi_i}^*{R\pi_i}_*(u_*\cN^t)\to (u_*\cN^t)\right]=u_*\cN^t$. 
Similarly, $u_*\cN^0$ is in the subcategory generated by $\hat \bG_N^{r}$ since 
$$\cO_{Z^l}(-Z^{l-1})=\cO_{\LM_{N_1}}\boxtimes\ldots\boxtimes\cO_{X^r_{N_l\cup\{i\}}}(-E_i)$$ 
and we may use (\ref{pullback2}) and Lemma \ref{cuspidal}. 
\ep


\section*{References}

\begin{biblist}

\bib{BFK}{misc}{   
  author       = {Ballard, Matthew},
   author       = {Favero, David},
    author       = {Katzarkov, Ludmil},
  title        ={Variation of geometric invariant theory quotients and derived categories},
    eprint={arXiv:1203.6643}
  year         = {2012},
}

\bib{BergstromMinabe}{article}{   
    AUTHOR = {Bergstr\"om, Jonas}
    AUTHOR={Minabe, Satoshi},
     TITLE = {On the cohomology of moduli spaces of (weighted) stable
              rational curves},
   JOURNAL = {Math. Z.},
  FJOURNAL = {Mathematische Zeitschrift},
    VOLUME = {275},
      YEAR = {2013},
    NUMBER = {3-4},
     PAGES = {1095--1108},
      ISSN = {0025-5874},
   MRCLASS = {14H10 (20C30)},
  MRNUMBER = {3127048},
MRREVIEWER = {Montserrat Teixidor i Bigas},
       DOI = {10.1007/s00209-013-1171-8},
       URL = {http://dx.doi.org/10.1007/s00209-013-1171-8},
}

\bib{BergstromMinabeLM}{article}{   
    AUTHOR = {Bergstr\"om, Jonas}
    AUTHOR={Minabe, Satoshi},
     TITLE = {On the cohomology of the {L}osev-{M}anin moduli space},
   JOURNAL = {Manuscripta Math.},
  FJOURNAL = {Manuscripta Mathematica},
    VOLUME = {144},
      YEAR = {2014},
    NUMBER = {1-2},
     PAGES = {241--252},
      ISSN = {0025-2611},
   MRCLASS = {14H10 (14M25)},
  MRNUMBER = {3193775},
MRREVIEWER = {Dawei Chen},
       DOI = {10.1007/s00229-013-0647-5},
       URL = {http://dx.doi.org/10.1007/s00229-013-0647-5},
}

\bib{Brion}{article}{   
 Title={The total coordinate ring of a wonderful variety}
Author={Brion, Michel}
Journal={Journal of Algebra}
Year={2007}
Volume={313}
Number={1}
Pages={61 - 99}
}

\bib{BK}{article}{  
Author={Brion, Michel}
Author={Kumar, Shrawan}
title={Frobenius splitting methods in
geometry and representation theory}
journal={Progress in Mathematics}
volume={231}
year={2005}
}

\bib{CT3}{article}{   	
AUTHOR = {Castravet, Ana-Maria}
AUTHOR={Tevelev, Jenia},
     TITLE = {Rigid curves on {$\overline M_{0,n}$} and arithmetic breaks},
 BOOKTITLE = {Compact moduli spaces and vector bundles},
    SERIES = {Contemp. Math.},
    VOLUME = {564},
     PAGES = {19--67},
 PUBLISHER = {Amer. Math. Soc., Providence, RI},
      YEAR = {2012},
   MRCLASS = {14H10 (14E30 14H45)},
  MRNUMBER = {2895183},
MRREVIEWER = {John Christian Ottem},
       DOI = {10.1090/conm/564/11156},
       URL = {http://dx.doi.org/10.1090/conm/564/11156},
}

\bib{CT2}{article}{   
AUTHOR = {Castravet, Ana-Maria}
AUTHOR={Tevelev, Jenia},
     TITLE = {Hypertrees, projections, and moduli of stable rational curves},
   JOURNAL = {J. Reine Angew. Math.},
  FJOURNAL = {Journal f\"ur die Reine und Angewandte Mathematik. [Crelle's
              Journal]},
    VOLUME = {675},
      YEAR = {2013},
     PAGES = {121--180},
      ISSN = {0075-4102},
   MRCLASS = {14H10 (14H51)},
  MRNUMBER = {3021449},
MRREVIEWER = {Dawei Chen},
}

\bib{CT1}{article}{   
AUTHOR = {Castravet, Ana-Maria}
AUTHOR={Tevelev, Jenia},
    TITLE = {{$\overline{M}_{0,n}$} is not a {M}ori dream space},
   JOURNAL = {Duke Math. J.},
  FJOURNAL = {Duke Mathematical Journal},
    VOLUME = {164},
      YEAR = {2015},
    NUMBER = {8},
     PAGES = {1641--1667},
      ISSN = {0012-7094},
   MRCLASS = {14E30 (14H10 14J60 14M25 14N20)},
  MRNUMBER = {3352043},
MRREVIEWER = {M. Kh. Gizatullin},
       DOI = {10.1215/00127094-3119846},
       URL = {http://dx.doi.org/10.1215/00127094-3119846},
}



\bib{Getzler}{article}{
AUTHOR = {Getzler, E.},
     TITLE = {Operads and moduli spaces of genus {$0$} {R}iemann surfaces},
 BOOKTITLE = {The moduli space of curves ({T}exel {I}sland, 1994)},
    SERIES = {Progr. Math.},
    VOLUME = {129},
     PAGES = {199--230},
 PUBLISHER = {Birkh\"auser Boston, Boston, MA},
      YEAR = {1995},
   MRCLASS = {18C10 (14H10 18D99 18G10 55P99)},
  MRNUMBER = {1363058},
MRREVIEWER = {J. Stasheff},
}

\bib{Ha}{article}{   
AUTHOR = {Hassett, Brendan},
     TITLE = {Moduli spaces of weighted pointed stable curves},
   JOURNAL = {Adv. Math.},
  FJOURNAL = {Advances in Mathematics},
    VOLUME = {173},
      YEAR = {2003},
    NUMBER = {2},
     PAGES = {316--352},
      ISSN = {0001-8708},
   MRCLASS = {14H10 (14D22 14E30)},
  MRNUMBER = {1957831},
MRREVIEWER = {Ivan S. Kausz},
       URL = {http://dx.doi.org/10.1016/S0001-8708(02)00058-0},
}

\bib{Huy}{article}{   
AUTHOR = {Huybrechts, D.},
     TITLE = {Fourier-{M}ukai transforms in algebraic geometry},
    SERIES = {Oxford Mathematical Monographs},
 PUBLISHER = {The Clarendon Press, Oxford University Press, Oxford},
      YEAR = {2006},
     PAGES = {viii+307},
      ISBN = {978-0-19-929686-6; 0-19-929686-3},
   MRCLASS = {14F05 (14-02 18E30)},
  MRNUMBER = {2244106},
MRREVIEWER = {Bal\'azs Szendr\H oi},
       DOI = {10.1093/acprof:oso/9780199296866.001.0001},
       URL = {http://dx.doi.org/10.1093/acprof:oso/9780199296866.001.0001},
}

\bib{Keel}{article}{
AUTHOR = {Keel, Sean},
     TITLE = {Intersection theory of moduli space of stable {$n$}-pointed
              curves of genus zero},
   JOURNAL = {Trans. Amer. Math. Soc.},
  FJOURNAL = {Transactions of the American Mathematical Society},
    VOLUME = {330},
      YEAR = {1992},
    NUMBER = {2},
     PAGES = {545--574},
      ISSN = {0002-9947},
   MRCLASS = {14C15 (14C17 14H10)},
  MRNUMBER = {1034665},
MRREVIEWER = {Steven E. Landsburg},
       DOI = {10.2307/2153922},
       URL = {http://dx.doi.org/10.2307/2153922},
}

\bib{KeelTevelev}{article}{
AUTHOR = {Keel, Sean}
AUTHOR={Tevelev, Jenia},
     TITLE = {Equations for {$\overline M_{0,n}$}},
   JOURNAL = {Internat. J. Math.},
  FJOURNAL = {International Journal of Mathematics},
    VOLUME = {20},
      YEAR = {2009},
    NUMBER = {9},
     PAGES = {1159--1184},
      ISSN = {0129-167X},
   MRCLASS = {14H10 (14D20 14D24)},
  MRNUMBER = {2572597},
MRREVIEWER = {Arvid Siqveland},
       DOI = {10.1142/S0129167X09005716},
       URL = {http://dx.doi.org/10.1142/S0129167X09005716},
}

\bib{Kuznetsov Rat}{article}{
AUTHOR = {Kuznetsov, Alexander},
     TITLE = {Derived categories view on rationality problems},
 BOOKTITLE = {Rationality problems in algebraic geometry},
    SERIES = {Lecture Notes in Math.},
    VOLUME = {2172},
     PAGES = {67--104},
 PUBLISHER = {Springer, Cham},
      YEAR = {2016},
   MRCLASS = {14F05 (14E07 14E08 14J45)},
  MRNUMBER = {3618666},}

\bib{Kuznetsov Lefschetz}{article}{
AUTHOR = {Kuznetsov, Alexander},
     TITLE = {Lefschetz decompositions and categorical resolutions of
              singularities},
   JOURNAL = {Selecta Math. (N.S.)},
  FJOURNAL = {Selecta Mathematica. New Series},
    VOLUME = {13},
      YEAR = {2008},
    NUMBER = {4},
     PAGES = {661--696},
      ISSN = {1022-1824},
   MRCLASS = {18E30 (14E15)},
  MRNUMBER = {2403307},
MRREVIEWER = {Emanuele Macr\'\i },
       DOI = {10.1007/s00029-008-0052-1},
       URL = {http://dx.doi.org/10.1007/s00029-008-0052-1},}

\bib{LM}{article}{
    AUTHOR = {Losev, A.},
    AUTHOR = {Manin, Y.},
     TITLE = {New moduli spaces of pointed curves and pencils of flat
              connections},
      NOTE = {Dedicated to William Fulton on the occasion of his 60th
              birthday},
   JOURNAL = {Michigan Math. J.},
  FJOURNAL = {Michigan Mathematical Journal},
    VOLUME = {48},
      YEAR = {2000},
     PAGES = {443--472},
      ISSN = {0026-2285},
   MRCLASS = {14N35 (14H10 53D45)},
  MRNUMBER = {1786500},
MRREVIEWER = {Andrew Kresch},
       DOI = {10.1307/mmj/1030132728},
       URL = {http://dx.doi.org/10.1307/mmj/1030132728},
}

\bib{MS}{article}{
AUTHOR = {Manin, Yu. I.}, 
AUTHOR = {Smirnov, M. N.},
     TITLE = {On the derived category of {$\overline M_{0,n}$}},
   JOURNAL = {Izv. Ross. Akad. Nauk Ser. Mat.},
  FJOURNAL = {Rossi\u\i skaya Akademiya Nauk. Izvestiya. Seriya
              Matematicheskaya},
    VOLUME = {77},
      YEAR = {2013},
    NUMBER = {3},
     PAGES = {93--108},
      ISSN = {1607-0046},
   MRCLASS = {14H10 (14F05)},
  MRNUMBER = {3098789},
MRREVIEWER = {Adrian Langer},
       DOI = {10.4213/im7944},
       URL = {http://dx.doi.org/10.4213/im7944},
}

\bib{MP}{article}{
AUTHOR = {Merkurjev, A. S.}, 
AUTHOR = {Panin, I.A.},
     TITLE = {K-theory of algebraic tori and toric varieties},
   JOURNAL = {K-Theory},
    VOLUME = {12},
      YEAR = {1997},
    NUMBER = {2},
     PAGES = {101--143},
}

\bib{Orlov blow-up}{article}{
AUTHOR = {Orlov, D.},
     TITLE = {Projective bundles, monoidal transformations, and derived
              categories of coherent sheaves},
   JOURNAL = {Izv. Ross. Akad. Nauk Ser. Mat.},
  FJOURNAL = {Rossi\u\i skaya Akademiya Nauk. Izvestiya. Seriya
              Matematicheskaya},
    VOLUME = {56},
      YEAR = {1992},
    NUMBER = {4},
     PAGES = {852--862},
      ISSN = {1607-0046},
   MRCLASS = {14F05 (18E30 18F20)},
  MRNUMBER = {1208153},
MRREVIEWER = {Krzysztof Jaczewski},
}

\bib{Orlov formal}{article}{   
AUTHOR = {Orlov, Dmitri},
     TITLE = {Formal completions and idempotent completions of triangulated
              categories of singularities},
   JOURNAL = {Adv. Math.},
  FJOURNAL = {Advances in Mathematics},
    VOLUME = {226},
      YEAR = {2011},
    NUMBER = {1},
     PAGES = {206--217},
      ISSN = {0001-8708},
   MRCLASS = {14F05 (14B05 14B20 18E30)},
  MRNUMBER = {2735755},
MRREVIEWER = {Leovigildo M. Alonso Tarrio},
}

\bib{stacks-project}{misc}{   
  shorthand    = {Stacks},
  author       = {The {Stacks Project Authors}},
  title        ={The Stacks Project, http://stacks.math.columbia.edu},
  year         = {2020},
}

\bib{Voisin}{article}{   
AUTHOR = {Voisin, Claire},
     TITLE = {Hodge theory and complex algebraic geometry. {I}},
    SERIES = {Cambridge Studies in Advanced Mathematics},
    VOLUME = {76},
   EDITION = {English},
      NOTE = {Translated from the French by Leila Schneps},
 PUBLISHER = {Cambridge University Press, Cambridge},
      YEAR = {2007},
     PAGES = {x+322},
      ISBN = {978-0-521-71801-1},
   MRCLASS = {32J25 (14C30 14D07 32G20)},
  MRNUMBER = {2451566},
}

\end{biblist}

\end{document}